\documentclass[12pt, a4paper, english]{amsart}
\usepackage{amsmath,amsthm,amssymb}
\usepackage{bbm}
\usepackage{libertine}
\usepackage{mathpazo}
\usepackage[shortlabels]{enumitem}
\usepackage{graphicx}
\usepackage{hyperref}
\usepackage{listings}
\usepackage{mathrsfs}
\usepackage{mathtools}
\usepackage{multicol}
\usepackage{stmaryrd}

\usepackage{tikz}
\usepackage{tikz-cd}
\usepackage{xcolor}
\definecolor{darkcandyapplered}{rgb}{0.64, 0.0, 0.0}
\definecolor{midnightblue}{rgb}{0.1, 0.1, 0.44}
\definecolor{mynewgreen}{HTML}{43916e}
\definecolor{lightblue}{RGB}{70, 130, 180}

\usepackage{hyperref}
\usepackage[noabbrev,capitalize]{cleveref}
\hypersetup{
	pdftoolbar=true,        
	pdfmenubar=true,        
	pdffitwindow=false,     
	pdfstartview={FitH},  
	pdftitle={},
	pdfauthor={},
	pdfsubject={},
	pdfkeywords={},
	pdfnewwindow=true,  
	colorlinks=true,  
	linkcolor=darkcandyapplered,   
	citecolor=midnightblue,  
	urlcolor=cyan,  
	linktocpage=true  
}

\textwidth=16cm 
\textheight=21cm 
\hoffset=-55pt 
\footskip=25pt

\newtheorem{thmx}{Theorem}

\newtheorem{conjx}[thmx]{Conjecture}

\theoremstyle{plain}
\newtheorem{theorem}{Theorem}[section]
\newtheorem{proposition}[theorem]{Proposition}
\newtheorem{lemma}[theorem]{Lemma}
\newtheorem{corollary}[theorem]{Corollary}
\newtheorem{conjecture}[theorem]{Conjecture}

\theoremstyle{definition}
\newtheorem{definition}[theorem]{Definition}
\newtheorem{question}[theorem]{Question}
\newtheorem{example}[theorem]{Example}
\theoremstyle{remark}
\newtheorem{remark}[theorem]{Remark}

 
\newcommand{\ZZ}{\mathbbmss{Z}} 
 
\newcommand{\RR}{\mathbbmss{R}}

\renewcommand{\to}{\rightarrow}

\renewcommand{\emptyset}{\varnothing}
%

%

\newcommand{\conv}{\mathrm{conv}} 
\newcommand{\link}{\mathrm{link}} 

\newcommand{\lk}{\mathrm{lk}} 

\newcommand{\cy}{\mathrm{cy}}


\newcommand{\mysetminusD}{\hbox{\tikz{\draw[line width=0.6pt,line cap=round] (3pt,0) -- (0,6pt);}}}
\newcommand{\mysetminusT}{\mysetminusD}
\newcommand{\mysetminusS}{\hbox{\tikz{\draw[line width=0.45pt,line cap=round] (2pt,0) -- (0,4pt);}}}
\newcommand{\mysetminusSS}{\hbox{\tikz{\draw[line width=0.4pt,line cap=round] (1.5pt,0) -- (0,3pt);}}}
\newcommand{\sm}{\mathbin{\mathchoice{\mysetminusD}{\mysetminusT}{\mysetminusS}{\mysetminusSS}}}

\usepackage{mathtools} 
\DeclarePairedDelimiter\abs{\lvert}{\rvert}%
\makeatletter
\let\oldabs\abs
\def\abs{\@ifstar{\oldabs}{\oldabs*}}

\newcommand\restricted[2]{{
  \left.\kern-\nulldelimiterspace 
  #1 
  \littletaller 
  $\right|_{#2}$ 
  }}
\newcommand{\littletaller}{\mathchoice{\vphantom{\big|}}{}{}{}}

\title{The number of edges of a symmetric edge polytope}
\author{Giulia Codenotti}
\address{(G. Codenotti)
	Department of Mathematics, Freie Universit\"at Berlin, Berlin, Germany
}
\email{giulia.codenotti@fu-berlin.de}

\author{Roberto Riccardi}
\address{(R. Riccardi)
	Scuola Normale Superiore, Pisa, Italy
}
\email{roberto.riccardi@sns.it}

\author{Lorenzo Venturello}
\address{(L. Venturello)
	Dipartimento di ingegneria dell'informazione e scienze matematiche, Universit\`a di Siena, Siena, Italy
}
\email{lorenzo.venturello@unisi.it}

\makeindex

\begin{document}
\begingroup
\maketitle
\endgroup

\begin{abstract}
    The symmetric edge polytope of a simple graph is a lattice polytope defined as the convex hull of a subset of the type $A$ roots corresponding to the edges of the graph. In this article we prove a sharp lower bound for the number of edges of the symmetric edge polytope of a graph as a function of elementary graph invariants. Moreover, we characterize graphs attaining this bound. We highlight a connection with the $h^*$-polynomial of such polytopes and, motivated by a conjecture of Ohsugi and Tsuchiya, we investigate the behaviour of such polynomial under edge-deletion in the graph.
\end{abstract}

\section{Introduction}

The main object of study of this article is the \emph{symmetric edge polytope} (in short SEP) of a graph. For a fixed finite simple graph $G = ([n],E)$, the symmetric edge polytope of $G$ is defined as $P_G := \conv\{\pm(e_i - e_j) : \{i,j\} \in E\}\subseteq \mathbb{R}^n$. These polytopes are a rather popular tool arising, sometimes in disguise, in different areas of mathematics such as physics (see for instance \cite{CDM,DDM}), the study of discrete metric spaces (\cite{Ver,DeHo}) and optimal transport (\cite{WassDist}). Our focus lies on combinatorial and geometric properties of these objects, a point of view which has also recently attracted a lot of attention (see \cite{HJM19,DDM,CDE,KT1,MMO}).\\
Our first result is a lower bound on the number of edges of the symmetric polytope of a graph $G$, denoted by $f_1(P_G)$ as a function of elementary graph invariants of $G$.
\begin{thmx}(\Cref{thm:edges_SEP_ineq})\label{thm: A}
	Let $G=(V,E)$ be a connected graph. Then 
    \[
    f_1(P_G) \geq  |E|(2|V|-5) + |E_3(G)|,
    \]
    with $E_3(G):= \lbrace e \in E \, | \, e \text{ belongs to a 3-cycle of $G$} \rbrace$.
\end{thmx}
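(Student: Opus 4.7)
The strategy is to reduce the bound to a per-vertex statement in the $1$-skeleton of $P_G$. Specifically, I aim to show that for every vertex $v=v_e^\sigma$ of $P_G$, with $e=\{a,b\}\in E$,
\[
\deg_{P_G}(v)\;\geq\;(2|V|-5)-\mathbbm{1}(e\in E_3(G)).
\]
Summing this over the $2|E|$ vertices of $P_G$ (each edge $e \in E$ being counted twice, once for each orientation $\sigma$) gives $2f_1(P_G)\ge 2|E|(2|V|-5)-2|E_3(G)|$, which upon dividing by $2$ is the claimed inequality.

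The per-vertex bound is proved via the duality characterization: a pair $\{u,w\}$ of vertices of $P_G$ is a $1$-face iff some linear functional $\langle c,\cdot\rangle$ with $c\in\mathbb{R}^V$ is uniquely maximized on $P_G$ at $\{u,w\}$. Interpreting $c$ as potentials on $V$ and each vertex $v_{(i,j)}=e_i-e_j$ of $P_G$ as an oriented arc of $G$ with voltage $c_i-c_j$, this says that the maximum voltage over all oriented edges of $G$ is attained at exactly the two arcs corresponding to $u$ and $w$. Fix $v = e_a - e_b$ and the base potential $c^0_a=1,\ c^0_b=-1,\ c^0_x=0$ otherwise, which uniquely maximizes $v$ with value $2$. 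I then produce neighbors of $v$ by controlled perturbations: for each $c\in V\setminus\{a,b\}$ with $\{c,b\}\in E$, raising $c_c$ to $+1$ forces $v_{(c,b)}$ to join $v$ as a co-maximizer; for each $c$ with $\{a,c\}\in E$, setting $c_c=-1$ forces $v_{(a,c)}$ to be a co-maximizer. When $c$ is adjacent to neither endpoint of $e$, a shortest path from $c$ to $\{a,b\}$ in $G$ (existing by connectedness) lets one cascade the perturbation along that path to still obtain a co-maximizer on an appropriate edge. Finally, for each edge $f=\{x,y\}\in E$ with $f\cap\{a,b\}=\emptyset$, the cost vector $c_a=c_x=1,\ c_b=c_y=-1$ (zero elsewhere) yields the edge $\{v,v_{(x,y)}\}$, provided the ``anti-diagonal'' edges $\{a,y\},\{x,b\}$ do not create spurious additional co-maximizers.

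A careful enumeration of the constructed co-maximizers, accounting for coincidences between the different perturbations, shows that one obtains at least $2|V|-5$ distinct neighbors of $v$ in general. The correction $-\mathbbm{1}(e\in E_3(G))$ arises from the following mechanism: when $e=\{a,b\}$ lies in a triangle $\{a,b,c\}$, the triangle identity $(e_a-e_b)+(e_b-e_c)=e_a-e_c$ forces one of the putative co-maximizers in the enumeration to coincide with a triangle non-neighbor of $v$, namely $v_{(b,c)}$ (whose midpoint with $v$ is the convex combination $\tfrac{3}{4}v_{(a,c)}+\tfrac{1}{4}v_{(c,a)}$, hence not an edge of $P_G$); this forces us to subtract $1$ from the count. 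The main difficulty is the careful bookkeeping: ensuring distinctness of the constructed neighbors across the perturbations (in particular in the path-cascade construction, where intermediate edges must not tie the maximum voltage), and verifying that the triangle correction is exactly $\mathbbm{1}(e\in E_3)$ — i.e., independent of the number of triangles through $e$ — which requires a matching argument to absorb the additional triangle non-neighbors arising from multiple triangles into the existing slack in the count.
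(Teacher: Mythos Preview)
Your plan hinges on the per-vertex inequality
\[
\deg_{P_G}(v_e^\sigma)\ \ge\ (2|V|-5)-\mathbbm{1}(e\in E_3(G)),
\]
but this inequality is \emph{false} in general, so the whole strategy breaks down at the outset. A small counterexample: take $G$ on $V=\{1,\dots,6\}$ with edges $12,13,34,24,15,56,26$ (two induced $4$-cycles glued along the edge $l=\{1,2\}$). Here $|V|=6$, $E_3(G)=\varnothing$, so your bound would require $\deg_{P_G}(e_{12})\ge 7$. A direct check using the edge criterion (two oriented edges form a $1$-face of $P_G$ iff they are not contained in a common oriented $3$- or $4$-cycle) gives exactly six neighbors of $e_{12}$, namely $e_{13},e_{34},e_{42},e_{15},e_{56},e_{62}$. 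Thus $\deg_{P_G}(e_{12})=6<7$. The problem is not a triangle phenomenon at all: the obstruction comes from $l$ sitting in two independent induced $4$-cycles (``pages''), each of which kills several of your putative co-maximizers without providing any replacement.

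The paper's proof anticipates exactly this failure. Writing $Z(G,l)=|F(G,l)|-2(2|V|-5)-2\cdot\mathbbm{1}_{E_3}(l)$ (which equals $2$ times your per-vertex defect), the authors do \emph{not} prove $Z(G,l)\ge 0$ for every $l$. Instead, through a careful constructive build-up of $G$ from $l$ (adding leaves, then edges creating $3$-cycles through $l$, then edges creating $4$-cycles through $l$, then the rest) they track how $Z(G_i,l)$ evolves, isolating the ``pages'' of $l$ as the only source of negativity. The key structural step (their Proposition~3.4) is that at most one edge $l$ can have $Z(G,l)<0$, and when this happens one can exhibit a companion edge $l'$ (the opposite side of a page of $l$) with $Z(G,l)+Z(G,l')\ge 0$. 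The bound then follows from summing $Z(G,l)$ over all edges. If you want to salvage your approach you would need an analogous pairing/compensation mechanism; a uniform per-vertex (equivalently per-edge) bound alone cannot work.
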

The proof of \Cref{thm: A} is graph theoretical, and based of a delicate analysis of the local contribution of each edge of the graph $G$ developed in \Cref{sec:z_2(G)}.\\
Next we show that the lower bound in \Cref{thm: A} is sharp and we obtain a rather clean characterization of graphs for which the bound in \Cref{thm: A} is attained.

\begin{thmx}(\Cref{cor: Z = 0})\label{thm: B}
	Let $G=(V,E)$ be a connected graph. Then
    \[
    f_1(P_G) =  |E|(2|V|-5) + |E_3(G)|
    \]
    if and only if either $G\cong K_n$, $G\cong K_{1,1,n-2}$ or $G\cong K_{2,n-2}$.
\end{thmx}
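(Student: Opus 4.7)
The argument proceeds by extracting from the proof of Theorem~A a local decomposition of the defect $z_2(G) := f_1(P_G) - \bigl(|E|(2|V|-5) - |E_3(G)|\bigr)$ as a sum $\sum_{e \in E} z_2^e(G)$ of non-negative contributions, one per edge of $G$, as developed in \Cref{sec:z_2(G)}. Equality in Theorem~A is therefore equivalent to $z_2^e(G)=0$ for every edge $e$, and the plan is to (a) translate this vanishing into a local structural constraint on the neighborhoods of the endpoints of $e$, and (b) classify the graphs in which the constraint holds at \emph{every} edge simultaneously.

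For the ``if'' direction I would verify the three families by direct calculation. For $G=K_n$ the polytope $P_{K_n}$ is the type $A_{n-1}$ root polytope, whose $1$-skeleton is well understood; since every edge of $K_n$ lies in a triangle, $|E_3(K_n)|=|E|$, and one checks that the two sides of the bound agree. For $G=K_{2,n-2}$ the graph is triangle-free so $|E_3(G)|=0$, and $P_G$ is affinely equivalent to a product of simplices whose edges can be enumerated from its facet description. For $G=K_{1,1,n-2}$ the computation differs from the previous one only by accounting for the extra edge joining the two ``small'' parts, which creates exactly $n-2$ triangles and correspondingly modifies both sides of the identity by the same amount.

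For the ``only if'' direction I would first describe the vanishing condition $z_2^e(G)=0$ at an edge $e=\{u,v\}$ explicitly in terms of $N(u)$ and $N(v)$: intuitively, it should say that the neighborhoods interact in a very restrictive way, forcing either a near-complete local subgraph around $e$ or a strictly bipartite-like pattern with very few ``missing'' edges. Imposing the condition at every edge simultaneously is then expected to propagate globally. A plausible argument is: if two non-adjacent vertices of $G$ each have degree at least $3$, the vanishing conditions at edges joining them to common neighbors should force enough further adjacencies (or the absence thereof) that the non-edges of $G$ organize themselves into at most two independent sets, yielding precisely $K_n$, $K_{1,1,n-2}$ or $K_{2,n-2}$ once connectedness is taken into account.

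The main obstacle is step (a): cleanly reading off the combinatorial meaning of $z_2^e(G)=0$ at a single edge from the polytopal/local analysis in \Cref{sec:z_2(G)}. Once this local condition is in hand, the global classification should follow from a short case analysis on the minimum degree and on the presence of triangles through each edge, combined with the connectedness hypothesis.
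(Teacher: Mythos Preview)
Your proposal rests on a premise that is not what \Cref{sec:z_2(G)} actually establishes. You write that the proof of Theorem~A yields a decomposition $z_2(G)=\sum_{e\in E} z_2^e(G)$ into \emph{non-negative} edge contributions, so that equality forces each $z_2^e(G)=0$. But the paper's decomposition is $z_2(G)=\tfrac12\sum_{l\in E} Z(G,l)$, and \Cref{inequalityedge} only shows that \emph{at most one} edge may have $Z(G,l)<0$, with the negative contribution compensated by some other $Z(G,l')$. So you cannot immediately conclude from $z_2(G)=0$ that every $Z(G,l)$ vanishes; this requires revisiting the inequalities in the proof of \Cref{inequalityedge} and squeezing them to equalities, which is exactly the first paragraph of \Cref{sec:z_2(G)=0}.

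The paper's route from there is rather different from your neighborhood-propagation sketch. The controlling parameter is $B(G,l)$, the number of $\approx$-equivalence classes of induced $4$-cycles (``pages'') through $l$. From the sharpness analysis one extracts $B\le 1$, that any edge with $B(G,f)=1$ avoids all triangles, and the crucial constraint $G=G^{(3b)}(f)$ for every $f$, i.e.\ every pair of edges lies in a common $3$- or $4$-cycle. The classification then splits cleanly: one lemma shows that if some edge has $B=1$ then $G\cong K_{2,n-2}$ (via a $\sim$-chain argument forcing all pages through $l$ to share a common side), and a second lemma handles $B\equiv 0$ by showing that every $3$-cycle through each edge is complete on the remaining vertex, yielding $K_{1,1,n-2}$ or $K_n$ after a small subgraph enumeration on five vertices. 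Your proposed mechanism (organizing non-edges into at most two independent sets via degree-$\ge 3$ non-adjacent vertices) is not obviously equivalent to this and is not developed enough to be a proof; you would need to say precisely what ``$z_2^e(G)=0$'' means in terms of $N(u)$ and $N(v)$, and that translation is the entire content of the argument.

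Two smaller points. For the ``if'' direction, your plan of direct verification is fine and is essentially what the paper leaves implicit; however, the claim that $P_{K_{2,n-2}}$ is affinely equivalent to a product of simplices is not correct (the non-symmetric edge polytope of a complete bipartite graph is a product of simplices, but the symmetric version is not), so you would need another route to count its edges --- the simplest being the graph-theoretic description of $F_1(G)$ given at the start of \Cref{sec:z_2(G)}.
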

We find a relation between the number $f_1(P_G)$ and the \emph{Ehrhart theory} of $P_G$. An interesting invariant for any lattice polytope $P$ is the function which controls how the number of lattice points grows with the dilations of the polytope. By a seminal result of Ehrhart this function agrees with a polynomial, and is called the Ehrhart polynomial of $P$. It is often more convenient to study the rational function obtained as the associated generating function to the Ehrhart polynomial. Its numerator is the \emph{$h^*$-polynomial} of $P$, denoted by $h^*_P(t)$. The $h^*$-polynomial of $P$ is a polynomial with nonnegative integer s and degree at most the dimension of $P$. In the case of symmetric edge polytopes the $h^*$-polynomial has degree exactly the dimension of the polytope, and a very special property: it is \emph{palindromic}, its coefficients satisfy $h^*_i=h^*_{d-i}$, where $h^*_{P_G}(t)=\sum_{i=0}^{d}h^*_it^i$ and $d=\dim P_G$.
Our main goal is to investigate a conjecture of Ohsugi and Tsuchiya \cite{OT21} predicting certain linear inequalities among the coefficients of the $h^*$-polynomial of symmetric edge polytopes. In order to state the conjecture we need to rewrite $h^*_{P_G}(t)$ in a particular basis for the space of palindromic polynomials of fixed center of symmetry (see \Cref{sec: preliminaries} for more details). The coefficients of $h^*_{P_G}(t)$ with respect to such basis can be collected to define a polynomial $\gamma_{P_G}(t)=\sum_{j=0}^{\lfloor\frac{d}{2}\rfloor} \gamma_jt^j$ called the \emph{$\gamma$-polynomial} associated to $h^*_{P_G}(t)$.

\begin{conjecture}{\cite[Conjecture 5.11]{OT21}}\label{conj: OT} The $\gamma$-polynomial associated to $h^*_{P_G}(t)$ has nonnegative coefficients for every graph $G$.   
\end{conjecture}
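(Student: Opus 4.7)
My plan is to route the argument through Gal's conjecture on flag simplicial spheres, which is the natural geometric setting for $\gg$-positivity of palindromic polynomials arising from reflexive polytopes. The first step would be to invoke the fact, due to Ohsugi--Hibi and Higashitani, that $P_G$ is reflexive and admits a regular unimodular triangulation whose induced triangulation of $\partial P_G$ is flag. Under such a triangulation $h^*_{P_G}(t)$ coincides with the $h$-polynomial of a flag simplicial $(d-1)$-sphere $\Delta_G$, and \Cref{conj: OT} becomes a special instance of Gal's conjecture for $\Delta_G$. Any approach will therefore have to exploit specific features of $\Delta_G$, for instance the central symmetry $P_G = -P_G$, or the underlying type $A_{n-1}$ root system combinatorics.

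For the low-order coefficients the situation is more concrete and is where \Cref{thm: A} should enter. One always has $\gg_0 = 1$, and $\gg_1 = h^*_1 - d$ can be written as a lattice point count. The coefficient $\gg_2$ is a linear combination of $f_0(\Delta_G)$, $f_1(\Delta_G)$ and a dimensional constant. Since $f_1(\Delta_G)$ equals $f_1(P_G)$ plus a correction from interior edges of the triangulation, I would expect the bound $f_1(P_G)\ge |E|(2|V|-5)-|E_3(G)|$ to translate almost directly into $\gg_2 \ge 0$, with the equality families $K_n$, $K_{1,1,n-2}$, $K_{2,n-2}$ of \Cref{thm: B} pinpointing exactly when $\gg_2$ is minimized. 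This would already confirm the conjecture in the first nontrivial coefficient beyond $\gg_1$, and explains why the edge-count estimate of \Cref{thm: A} is the right preparatory result.

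The main obstacle lies in the intermediate range $2 < j < \lfloor d/2 \rfloor$, where Gal's conjecture is wide open and there is no generic reason SEPs should be easier than arbitrary flag spheres. Two routes seem worth attempting. The first is to build a combinatorial witness for $\gg_{P_G}(t)$, for example a discrete Morse matching or a flag vertex decomposition of $\Delta_G$ whose critical cells are counted by the $\gg$-coefficients. The second is to set up an induction on $|E|$ using the edge-deletion behaviour of $h^*_{P_G}$ that the paper investigates; however, deleting an edge may drop the dimension of $P_G$ and distorts the $\gg$-basis expansion in a way that is not obviously monotone. Overcoming this would require a refined invariant, perhaps a $\gg$-polynomial indexed by subsets of edges of $G$, that simultaneously controls the inductive step and specializes correctly to $\gg_{P_G}(t)$.
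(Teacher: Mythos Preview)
This statement is an open conjecture; the paper does not prove it. What the paper establishes is the case $\gamma_2\ge 0$ (reproving \cite[Theorem~3.2]{AJKKV23}) and formulates a strengthening, \Cref{conj: zG nonnegative}, whose validity would imply \Cref{conj: OT}. So there is no proof to compare against, only a strategy, and your proposal should be read against that.

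Your second route, induction on $|E|$ via edge deletion, is precisely the paper's strategy, and the obstacles you raise are addressed there. The dimension drop is avoided by first reducing to $2$-connected graphs via multiplicativity of $h^*$ over blocks (\Cref{prop:goal_implies_gamma}); for such $G$ every edge lies on a cycle, so $\dim P_{G\sm e}=\dim P_G$ and the $\gamma$-basis is stable. The refined invariant you ask for is exactly $Z_G(t)=\sum_{e\in E}(\gamma_{P_G}(t)-\gamma_{P_{G\sm e}}(t))$: one does not need monotonicity edge by edge, only the existence of one good edge, and nonnegativity of $Z_G$ guarantees this by averaging.

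Your account of how \Cref{thm: A} feeds into $\gamma_2$ is where the proposal diverges from the paper, and the divergence matters. You propose to read $\gamma_2(h^*_{P_G})$ off $f_0(\Delta_G),f_1(\Delta_G)$ for a flag unimodular triangulation $\Delta_G$ of $\partial P_G$, and then bound $f_1(\Delta_G)=f_1(P_G)+(\text{correction})$ using \Cref{thm: A}. But the correction term, the number of edges introduced by triangulating the non-simplex facets of $P_G$, is not controlled by \Cref{thm: A}, and a back-of-the-envelope substitution shows the resulting inequality for $\gamma_2$ does not close. The paper never touches $f_1(\Delta_G)$. Instead it computes, for each edge $ij$, the quadratic $\gamma$-coefficient of $c^{ij}(t)=h^*_{P_G}(t)-h^*_{P_{G\sm ij}}(t)$ via a Betke--McMullen expansion of a triangulation of $P_G\setminus P_{G\sm ij}$ (\Cref{lem:diff_hstar}), obtaining $\gamma_2(c^{ij})=2|N_{P_G}(e_{ij})|-2(2n-5)-2\cdot\mathbbm{1}_{E_3}(ij)$. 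Summing over $E$ and using $\sum_{ij}|N_{P_G}(e_{ij})|=f_1(P_G)$ gives that $z_2$, the quadratic coefficient of $Z_G(t)$, is literally twice the gap in \Cref{thm: A}. Thus \Cref{thm: A} is equivalent to $z_2\ge 0$, which yields a good edge and hence $\gamma_2\ge 0$ by induction; it does not bound $\gamma_2(h^*_{P_G})$ directly. Your first route via Gal's conjecture is a correct reformulation but, as you note yourself, gives no leverage on the middle coefficients; the paper does not pursue it.
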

The $\gamma$-polynomial has been introduced by Gal \cite{Gal} to formulate a different but related positivity conjecture. Moreover, nonnegativity of $\gamma$-polynomials has received a lot of attention in combinatorics, as this property sits between the weaker unimodality of the coefficients of the corresponding palindromic polynomial and the stronger real-rootedness of the coefficients (see \cite{Ath, Bra}).\\
Interestingly, the polynomial $h^*_{P_G}(t)$, and hence $\gamma_{P_G}(t)$ only depend on the graphic matroid of $G$ (see for instance \cite[Theorem 4.6]{DJK}). In \cite{DJK} the authors generalize the definition of symmetric edge polytopes to the broader class of regular matroids, and show that these new objects still have a palindromic $h^*$-polynomial. However, it was recently shown that there are regular matroids for which the corresponding $\gamma$-polynomial has a negative coefficient.\\
\Cref{conj: OT} has been verified for either very special families of graphs (\cite{OT-2020,OT21}) or for specific coefficients of $\gamma_{P_G}(t)$ (\cite{AJKKV23}). More precisely, $\gamma_0=1$ and $\gamma_1$ is equal to twice the cyclomatic number of the graph. In \cite{AJKKV23} the authors show that also $\gamma_2$ is nonnegative for every graph, even though there is no elementary interpretation of this number. Their proof strategy relies on a comparison between the quadratic coefficients of $\gamma_{P_G}(t)$ and of $\gamma_{P_{G\setminus e}}(t)$. In particular, they show that for every graph $G$ \emph{there exists} an edge $e$ such that the quadratic coefficient of $\gamma_{P_G}(t)-\gamma_{P_{G\setminus e}}(t)$ is nonnegative. This observation, together with a simple inductive argument, yields the nonnegativity of $\gamma_2$ for every graph (see \Cref{prop:goal_implies_gamma})). However, there are graphs and choices of an edge $e$ for which the quadratic coefficient of $\gamma_{P_G}(t)-\gamma_{P_{G\setminus e}}(t)$ is negative (see \Cref{ex:negative-gamma}). In order to absorb the potentially negative contribution we propose the following definition.

\begin{definition}
    Let $G=([n],E)$ be a $2$-connected graph. We define
    \[
        Z_G(t):= \sum_{e\in E}(\gamma_{P_G}(t)-\gamma_{P_{G\setminus e}}(t)).
    \]
\end{definition}
In order to prove \Cref{conj: OT} it would be sufficient (in fact stronger) to show the following conjecture.
\begin{conjx}(Conjecture \ref{conj: zG nonnegative})
	Let $G=([n],E)$ be a $2$-connected graph. Then the polynomial $Z_G(t)$ has nonnegative coefficients.
\end{conjx}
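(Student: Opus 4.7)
The plan is to establish the nonnegativity of the coefficients of $Z_G(t)$ degree by degree, using \Cref{thm: A} to handle the low-degree part and proposing a geometric framework for higher degrees.

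\emph{Constant and linear coefficients.} Since $\gamma_0(P_H)=1$ for every connected graph $H$, the constant coefficient of $Z_G(t)$ is $\sum_{e\in E}(1-1)=0$. As $G$ is $2$-connected, $G\setminus e$ remains connected for every $e\in E$, and $\mathrm{cyc}(G\setminus e)=\mathrm{cyc}(G)-1$. Using the identity $\gamma_1(P_H)=2\,\mathrm{cyc}(H)$ recalled in the introduction, the linear coefficient of $Z_G(t)$ equals $\sum_{e\in E}\bigl(2\,\mathrm{cyc}(G)-2(\mathrm{cyc}(G)-1)\bigr)=2|E|\geq 0$.

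\emph{Quadratic coefficient.} This is where \Cref{thm: A} enters the picture. The idea is to write $\gamma_2(P_G)$ explicitly through the change of basis between the $h^*$-coefficients and the $\gamma$-coefficients, and then relate the resulting expression to the face numbers of $P_G$ via the $h^*$-polynomial of a regular unimodular triangulation of $P_G$. A careful bookkeeping of how $|V|$, $|E|$, $|E_3(G)|$, and $f_1(P_G)$ change under the deletion of an arbitrary edge should show that the quadratic coefficient of $Z_G(t)$ is, up to manifestly nonnegative lower-order terms, a positive multiple of the defect $f_1(P_G)-|E|(2|V|-5)+|E_3(G)|$. Nonnegativity of this defect is precisely \Cref{thm: A}, yielding the desired conclusion at degree $2$.

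\emph{Higher-degree coefficients and main obstacle.} For $k\geq 3$, no elementary interpretation of $\gamma_k(P_G)$ is known, and a direct inequality in the spirit of \Cref{thm: A} seems out of reach. A plausible line of attack is to exploit a regular unimodular triangulation of $P_G$ coming from a total order on the vertex set of $G$: deleting an edge should correspond to removing a controlled star-like subcomplex, and the ensuing change in the $h$-vector could be tracked via a shelling argument. The principal obstacle is that \Cref{ex:negative-gamma} already exhibits negative individual summands $\gamma_k(P_G)-\gamma_k(P_{G\setminus e})$ at $k=2$, so any proof must rely on a global cancellation across all edges. Encoding this cancellation in a combinatorial model—for instance by enumerating oriented substructures of $G$ weighted by a statistic attached to the triangulation—appears to be the main hurdle.
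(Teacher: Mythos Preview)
The statement you are asked to prove is \emph{Conjecture}~\ref{conj: zG nonnegative}: in the paper it is an open problem, not a theorem. The paper does not contain a proof of it; what the paper establishes is only the partial result that the coefficients $z_0$, $z_1$, and $z_2$ of $Z_G(t)$ are nonnegative (the last of these being \Cref{thm: C}). Your proposal is therefore not a proof either, and you correctly identify this yourself in the paragraph on higher-degree coefficients: for $k\geq 3$ you offer only a heuristic (``a plausible line of attack'', ``appears to be the main hurdle''), not an argument. So there is a genuine gap, but it is the same gap the paper leaves open.

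On the part that \emph{is} proved in the paper, your treatment of $z_0$ and $z_1$ is correct and matches the paper's. Your sketch for $z_2$ is in the right spirit but imprecise in two ways. First, the paper does not merely bound $z_2$ by the defect $f_1(P_G)-|E|(2|V|-5)+|E_3(G)|$ ``up to manifestly nonnegative lower-order terms'': equation~\eqref{eq: z2 is z2} shows $z_2$ is \emph{exactly} twice this defect, and this exact identity is what makes \Cref{thm: A} decisive. Second, the mechanism is not a comparison of unimodular triangulations of $P_G$ and $P_{G\setminus e}$ as you suggest, but rather the Betke--McMullen formula applied to a triangulation of the set-theoretic difference $P_G\setminus P_{G\setminus e}$ (see \Cref{lem:diff_hstar}), followed by an explicit computation of the local $h^*$-polynomials of the non-unimodular simplices appearing there (\Cref{lem:box_pol}) and a counting argument identifying $f_0(\Gamma^{ij})$ with the degree of $e_{ij}$ in the edge graph of $P_G$. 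Your idea of tracking changes in a triangulation of $P_G$ itself under edge deletion would run into the difficulty that an HJM triangulation of $P_G$ does not in general restrict to one of $P_{G\setminus e}$.
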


 While it is rather immediate to show that the constant and linear coefficient are nonnegative, we make use of \Cref{thm: A} to prove the following.

\begin{thmx}(Equation \eqref{eq: z2 is z2})\label{thm: C}
	Let $G=([n],E)$ be a $2$-connected graph. Then 
    \[
        z_2 = f_1(P_G) -  |E|(2|V|-5) - |E_3(G)|,
    \]    
    where $z_2$ is the quadratic coefficient of $Z_G(t)$. In particular, it follows from \Cref{thm: A} that $z_2\geq 0$.
\end{thmx}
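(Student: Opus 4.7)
The plan is to compute the quadratic coefficient of $Z_G(t) = \sum_{e \in E}\bigl(\gamma_{P_G}(t) - \gamma_{P_{G\setminus e}}(t)\bigr)$ directly, so that
$$z_2 = \sum_{e \in E}\bigl(\gamma_2(P_G) - \gamma_2(P_{G\setminus e})\bigr).$$
Using the basis identity $h^*_{P_G}(t) = \sum_j \gamma_j t^j (1+t)^{d-2j}$ with $d = |V|-1$, one obtains $\gamma_2 = h^*_2 - \binom{d}{2} - (d-2)\gamma_1$. Combined with the known identity $\gamma_1(P_G) = 2(|E| - |V| + 1)$ and the edge-deletion comparison for $h^*_2(P_G)$ developed in \cite{AJKKV23}, the difference $\gamma_2(P_G) - \gamma_2(P_{G\setminus e})$ should be writeable as a local combinatorial expression at $e$: one part coming from the edges of $P_G$ incident to the polytope-vertices $\pm(e_i - e_j)$, one part from the triangles of $G$ through $e$, plus a constant depending on $|V|$.

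The next step is a telescoping sum over $e \in E$. Since each polytope-vertex of $P_G$ is of the form $\pm(e_i - e_j)$ and disappears from $P_G$ exactly when the graph-edge $\{i,j\}$ is deleted, and since (for $G$ $2$-connected) antipodal vertex pairs are not connected by polytope-edges — the connecting segment passes through the interior point $0$ — a standard handshake argument yields
$$\sum_{e \in E}\bigl( f_1(P_G) - f_1(P_{G\setminus e})\bigr) = 2\, f_1(P_G).$$
A parallel counting argument on triangle-edge incidences converts $\sum_{e \in E}(|E_3(G)| - |E_3(G\setminus e)|)$ into a linear combination of $|E_3(G)|$ and correction terms that need to be tracked carefully, and the constant contributions depending on $|E|$ and $|V|$ accumulate to $-|E|(2|V|-5)$.

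The hardest step will be isolating, from the comparison formula supplied by \cite{AJKKV23}, a local expression for $\gamma_2(P_G) - \gamma_2(P_{G\setminus e})$ in which $f_1(P_G) - f_1(P_{G\setminus e})$ and the triangle count through $e$ appear with precisely the right coefficients, and then verifying that the three telescoping sums above produce coefficient $1$ for $f_1(P_G)$ and for $|E_3(G)|$, together with the constant $-|E|(2|V|-5)$. This is exactly the kind of delicate edge-by-edge analysis developed in \Cref{sec:z_2(G)}. Once the three contributions are consolidated, the identity follows by direct comparison, and the nonnegativity $z_2 \geq 0$ is then immediate from \Cref{thm: A}.
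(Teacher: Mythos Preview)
Your proposal is a plan rather than a proof, and it contains a genuine gap. The telescoping identity you assert,
\[
\sum_{e\in E}\bigl(f_1(P_G)-f_1(P_{G\setminus e})\bigr)=2f_1(P_G),
\]
is false in general: deleting a graph edge can \emph{create} new polytope edges in $P_{G\setminus e}$ that were not edges of $P_G$. For instance, take $G$ on $[4]$ with edge set $\{12,23,34,41,13\}$. The pair $(\vec{12},\vec{34})$ lies in the oriented $4$-cycle $1\to2\to3\to4\to1$, so $[e_{12},e_{34}]$ is not an edge of $P_G$; but after deleting $41$ the only remaining cycle is the triangle $123$, so $[e_{12},e_{34}]$ \emph{is} an edge of $P_{G\setminus 41}$. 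Thus $f_1(P_G)-f_1(P_{G\setminus e})$ is not the same as the count of edges of $P_G$ incident to $\pm e_{ij}$, and your handshake argument --- which is really an argument about $\sum_e 2|N_{P_G}(e_{ij})|$ --- does not establish the displayed identity. The analogous issue arises for the triangle term: the relevant local quantity is $\mathbb{1}_{E_3(G)}(e)$, not $|E_3(G)|-|E_3(G\setminus e)|$, and these differ whenever deleting $e$ destroys the last triangle through some other edge.

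The paper avoids this entirely by never comparing $f_1(P_G)$ with $f_1(P_{G\setminus e})$. Instead it applies the Betke--McMullen formula to a triangulation $\Delta^{ij}$ of (the closure of one component of) $P_G\setminus P_{G\setminus ij}$, computes the $\ell^*$-polynomials of the simplices appearing there, and obtains directly
\[
\gamma_2\bigl(h^*_{P_G}-h^*_{P_{G\setminus ij}}\bigr)=2f_0(\Gamma^{ij})-2(2n-5)-2\cdot\mathbb{1}_{E_3(G)}(ij),
\]
where $\Gamma^{ij}$ is the triangulation of the facets of $P_{G\setminus ij}$ visible from $e_{ij}$. The geometric lemma $f_0(\Gamma^{ij})=|N_{P_G}(e_{ij})|$ then lets one sum over $ij\in E$ via the genuine handshake $\sum_{ij}|N_{P_G}(e_{ij})|=f_1(P_G)$, yielding the formula. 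The substantive work --- deriving the local formula from Betke--McMullen and proving the visibility lemma --- is exactly what your plan defers to ``the comparison formula supplied by \cite{AJKKV23}'', but that reference does not supply a formula in terms of $|N_{P_G}(e_{ij})|$ or $f_1$; the derivation here is new and geometric.
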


Perhaps surprisingly, we derive the equality in \Cref{thm: C} from a geometric source. First we consider a particular triangulation of $P_{G}$ into lattice simplices, and then we employ Betke-McMullen formula to compare $h^*_{P_G}(t)$ and $h^*_{P_{G\setminus e}}(t)$. Besides the result in \Cref{thm: C} we propose in \Cref{sec:conjectured_formula} a conjectured formula for the polynomial $h^*_{P_G}(t)- h^*_{P_{G\setminus e}}(t)$ which can provide further insights on \Cref{conj: OT} (see \Cref{cong: formula for difference}).

The structure of the article is the following: in \Cref{sec: preliminaries} we provide the reader with all necessary definitions and background, as well as important results about symmetric edge polytopes. \Cref{sec:z_2(G)} and \Cref{sec:z_2(G)=0} are devoted to the proofs of \Cref{thm: A} and \Cref{thm: B} respectively. In \Cref{sec:Ehrhart} we explain the connection with Ehrhart theory and we prove \Cref{thm: C}. Finally in Section \Cref{sec:conjectured_formula} we formulate new conjectures on the $h^*$-polynomial of symmetric edge polytopes.

\section{Preliminaries}\label{sec: preliminaries}

\subsection{Lattice polytopes}

A \emph{lattice polytope} is the convex hull of finitely many points which are in a lattice contained in $\RR^d$, which is typically $\ZZ^d$. 
An important property of lattice polytopes is reflexivity, which is first defined in the full-dimensional case, then extended in the general framework. When $P \subseteq \RR^d$ is full-dimensional we say that $P$ is \emph{reflexive} if 
\[
    P ^\vee := \left\{ u \in \RR^d \, | \, \langle u,v\rangle \leq 1 \text{ for any } v \in P  \right\}
\]
is also a lattice polytope, where $\langle \cdot , \cdot \rangle$ is the usual scalar product in $\RR^d$. In general, a lattice polytope $P\subseteq \RR^d$ is \emph{reflexive} if there exists a linear isomorphism $\varphi: \text{aff($P$)} \longrightarrow \RR^m$ which is also a lattice isomorphism such that $\varphi(P) \subseteq \RR^m$ is a full-dimensional lattice polytope and it is reflexive.

For any lattice $d$-polytope $P$ we consider $E_P(n):= | nP \cap \ZZ^d |$. Ehrhart in \cite{Ehrhart} proved that $E_P(n)$ is a polynomial in $n$ of degree $d$, the \emph{Ehrhart polynomial}. Equivalently, the Ehrhart series $Ehr_P(t):= 1 + \sum_{n\geq 1 } { E_P(n)t^n }$ is a rational function of the form $\frac{h^*_P(t)}{(1-t)^{d+1}}$, where $h^*_P(t)$ is a polynomial of degree at most $d$, which is called the \emph{$h^*$-polynomial} of $P$.

A fundamental theorem by Stanley \cite{Stanley} states that the $h^*$-polynomial has non negative integer coefficients, and Hibi in \cite{Hib} proved that a $d$-dimensional lattice polytope $P$ is reflexive if and only if its $h^*$-polynomial is palindromic, that is, $h^*_P(t)=t^dh^*_P(\frac{1}{t})$.

\subsection{Triangulations and relations with Ehrhart theory}
A \emph{triangulation} $\mathcal{T}$ of a d-polytope $P$ is a simplicial complex whose geometric realization is $P$. The \emph{f-polynomial} $f_{\mathcal{T}}(t)=f_{-1} + f_0x+ \ldots + f_dx^{d+1}$ encodes the number of faces of $\mathcal{T}$ in all dimensions: $f_i := \big| \lbrace \Delta \in \mathcal{T} \, | \, \dim \Delta= i \rbrace \big|$. The \emph{h-polynomial} $h_{\mathcal{T}}(t)= h_0+ h_1x+ \ldots + h_{d+1}x^{d+1}$ is given via the following relation:
\[
	f_{\mathcal{T}}(t)= \sum_{i=0}^{d+1} h_i t^i (1+t)^{d+1-i}.
\]

A d-dimensional lattice simplex is called \emph{unimodular} if its vertices affinely span the integer lattice $\ZZ^d$ and a triangulation of a lattice polytope into unimodular simplices is called a \emph{unimodular triangulation}. Stanley in \cite{Stanley} gave an important result which relates lattice polytopes and their unimodular triangulations.

\begin{theorem}{\cite[Corollary 2.5]{Stanley}}
	If $P$ is an integral d-polytope that admits a unimodular triangulation $\mathcal{T}$, then $h^*_P(t)= h_{\mathcal{T}} (t)$.
\end{theorem}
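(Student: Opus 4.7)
The plan is to compute the Ehrhart series of $P$ directly from $\mathcal{T}$ and then recognize the numerator as $h_\mathcal{T}(t)$ via the standard $f$-to-$h$ substitution.

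First, because $\mathcal{T}$ triangulates $P$, I would use the decomposition $P = \bigsqcup_{\sigma \in \mathcal{T} \sm \{\emptyset\}} \mathrm{relint}(\sigma)$, which is preserved under integer dilation, so
\[
E_P(n) = \sum_{\sigma \in \mathcal{T} \sm \{\emptyset\}} |\mathrm{relint}(n\sigma) \cap \ZZ^d|.
\]
Next, I would evaluate the summand for a unimodular $k$-simplex. Since $\sigma$ is unimodular, it is affinely $\ZZ$-equivalent to the standard simplex $\Delta_k = \conv(0, e_1, \ldots, e_k)$, so counting lattice points in $\mathrm{relint}(n\sigma)$ reduces to counting tuples of positive integers $(x_1, \ldots, x_k)$ with $x_1 + \cdots + x_k \leq n - 1$; a stars-and-bars argument then gives $\binom{n-1}{k}$. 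Grouping by dimension yields
\[
E_P(n) = \sum_{k=0}^{d} f_k \binom{n-1}{k}, \qquad n \geq 1.
\]

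Once the Ehrhart polynomial is in this form, I would pass to the generating function. Using $\sum_{n \geq 1} \binom{n-1}{k} t^n = t^{k+1}/(1-t)^{k+1}$ together with $E_P(0) = 1$, and setting $f_{-1} := 1$ for the empty face, a short manipulation gives
\[
Ehr_P(t) = \frac{1}{(1-t)^{d+1}} \sum_{j=0}^{d+1} f_{j-1}\, t^j (1-t)^{d+1-j},
\]
so $h^*_P(t) = \sum_{j=0}^{d+1} f_{j-1}\, t^j (1-t)^{d+1-j}$. To close the argument, I would compare this polynomial with $h_\mathcal{T}(t)$: substituting $s = t/(1-t)$ into the defining relation $f_\mathcal{T}(s) = \sum_i h_i s^i (1+s)^{d+1-i}$ and clearing denominators gives $h_\mathcal{T}(t) = (1-t)^{d+1} f_\mathcal{T}\!\left(t/(1-t)\right)$, and expanding the right-hand side via $f_\mathcal{T}(s) = \sum_j f_{j-1} s^j$ produces exactly the same polynomial found for $h^*_P(t)$.

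The one step where real content enters is the lattice-point count $|\mathrm{relint}(n\sigma) \cap \ZZ^d| = \binom{n-1}{k}$ for a unimodular simplex; this is precisely where the unimodularity hypothesis is used, since for a non-unimodular simplex the count would be weighted by a normalized volume factor and the clean match with the combinatorial $h_\mathcal{T}(t)$ would break down. Everything else is routine bookkeeping with binomial coefficients and rational generating functions.
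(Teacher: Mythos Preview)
The paper does not prove this theorem; it is quoted from Stanley \cite[Corollary 2.5]{Stanley} as a background result in the preliminaries section, with no argument supplied. Your proof is a correct and complete standard derivation: the decomposition of $P$ into relative interiors of the simplices of $\mathcal{T}$, the lattice-point count $|\mathrm{relint}(n\sigma)\cap\ZZ^d|=\binom{n-1}{k}$ for a unimodular $k$-simplex, and the generating-function manipulation identifying the numerator of the Ehrhart series with $h_{\mathcal{T}}(t)$ via the $f$-to-$h$ substitution are all correctly executed.
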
 

We will further make use of the following theorem, known as the Betke-McMullen formula. Indeed, Betke and McMullen in \cite{BetkeMcMullen} extend the notion of $h^*$-polynomial to apply to the support of a simplicial complex, and give a nice formula which extends the previous result of Stanley. 

\begin{theorem}{\cite[Theorem 1]{BetkeMcMullen}}\label{BetkeMcMullen}
	Let $\Delta$ be a geometric simplicial complex whose vertices are in $\ZZ^d$, and let $|\Delta|$ denote its support. Then
	
	\[
	h^*_{|\Delta|}(z) = \sum_{\sigma \in \Delta} h_{\link(\sigma)}(z)\ell^*_\sigma(z),
	\]
	
\end{theorem}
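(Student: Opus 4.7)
The plan is to prove this identity by decomposing the lattice points of the dilations $n|\Delta|$ according to the combinatorial structure of $\Delta$ and then reorganizing the resulting generating series. First, every lattice point of $n|\Delta| \cap \ZZ^d$ lies in the relative interior of exactly one face $\sigma \in \Delta$ (with the convention that the origin at $n=0$ corresponds to $\sigma = \emptyset$, for which $\link(\emptyset) = \Delta$ and $\ell^*_\emptyset(z) = 1$), which yields the disjoint decomposition
\[
\lvert n|\Delta| \cap \ZZ^d \rvert = \sum_{\sigma \in \Delta} \lvert n \cdot \mathrm{relint}(\sigma) \cap \ZZ^d \rvert,
\]
and hence splits the Ehrhart series $Ehr_{|\Delta|}(z)$ as a sum over faces of $\Delta$.

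Next, for each simplex $\sigma \in \Delta$ I pass to the height-lifted cone $C_\sigma = \RR_{\geq 0}\langle (v,1) : v \in \sigma \rangle \subseteq \RR^{d+1}$, whose slice at height $n$ recovers $n\sigma$. Using the classical lattice decomposition of $C_\sigma$ as its open fundamental parallelepiped $\Pi^\circ_\sigma$ plus nonnegative integer combinations of the generators $(v_i,1)$, the open Ehrhart series of $\sigma$ is expressed through the box polynomial $\ell^*_\sigma(z)$, which counts the lattice points of $\Pi^\circ_\sigma$ graded by height. When $\sigma$ is unimodular $\ell^*_\sigma(z)$ vanishes, and only the empty-face contribution survives, recovering Stanley's theorem $h^*_{|\Delta|}(z) = h_\Delta(z)$ as a consistency check.

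Finally, I substitute these local generating series into the split Ehrhart expression, multiply through by $(1-z)^{d+1}$, and reorganize the resulting sum. The key observation is that the simplices of $\Delta$ containing a fixed $\sigma$ are in bijection with the simplices of $\link(\sigma)$; applying the standard $f$-to-$h$ transform to the inner sum over $\link(\sigma)$ (working with its dimension $d - \dim \sigma - 1$) collapses the contribution attached to $\ell^*_\sigma(z)$ exactly to $h_{\link(\sigma)}(z)$, yielding the formula. The main obstacle is this final bookkeeping step: carefully matching the powers of $(1-z)$ to the $h$-polynomials of the links requires tracking dimensions precisely, and it is crucial that the box polynomials $\ell^*_\sigma(z)$ encode the ``non-unimodular defect'' of each simplex, which is what makes them the correct local invariants for the identity to hold.
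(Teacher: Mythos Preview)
The paper does not give its own proof of this statement: it is quoted verbatim as \cite[Theorem~1]{BetkeMcMullen} and used as a black box in the proof of \Cref{lem:diff_hstar}. There is therefore no paper argument to compare your proposal against.

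Your outline is the standard route to the Betke--McMullen formula and is essentially correct, but one step is stated imprecisely. In your second step you say that the open cone $C_\sigma$ decomposes as ``its open fundamental parallelepiped $\Pi^\circ_\sigma$ plus nonnegative integer combinations of the generators $(v_i,1)$'' and that this expresses the open Ehrhart series of $\sigma$ through $\ell^*_\sigma(z)$. Taken literally this fails: a point $\sum_i\lambda_i\bar v_i$ with all $\lambda_i>0$ but some $\lambda_i\in\ZZ$ lies in the open cone yet not in $\Pi^\circ_\sigma+\ZZ_{\ge 0}\langle\bar v_i\rangle$, so the open Ehrhart series of $\sigma$ alone is \emph{not} $\ell^*_\sigma(z)/(1-z)^{\dim\sigma+1}$. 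The correct decomposition is global: every lattice point $p$ in the cone over $|\Delta|$ can be written uniquely as $q+\sum_{v\in\tau}m_v\bar v$, where $q\in\Pi^\circ_\sigma\cap\ZZ^{d+1}$ for a unique face $\sigma$, $\tau\supseteq\sigma$ is a face of $\Delta$, and $m_v\ge 0$ for $v\in\sigma$ while $m_v\ge 1$ for $v\in\tau\setminus\sigma$. Grouping first by $\sigma$ produces the factor $\ell^*_\sigma(z)$; summing then over $\tau\supseteq\sigma$ --- which is exactly your link bijection in step~3 --- contributes $\sum_{\rho\in\link(\sigma)}(z/(1-z))^{|\rho|}$, and the $f$-to-$h$ transform turns this into $h_{\link(\sigma)}(z)/(1-z)^{D-\dim\sigma}$. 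After clearing the common denominator $(1-z)^{D+1}$ you obtain the formula. So the bookkeeping you flag as the ``main obstacle'' is indeed where the care is needed, and with the decomposition stated in this corrected form your argument goes through.
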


where $\ell^*_\sigma(z)$ is the local $h^*$-polynomial (in the case of simplices is also called the box polynomial), a related notion to the $h^*$-polynomial of a lattice polytope. Defining it for general polytopes has its subtleties (see \cite{BKN24}), but for simplices there is a nice combinatorial interpretation (\cite{Stan92}), which will serve as definition here.

\begin{definition}
	Let $S\subseteq \RR^d$ be a $k$-dimensional lattice simplex with vertices $v_0, \dots, v_k$.
	The fundamental parallelepiped $\Pi(S)$ of $S$ is the following $(k+1)-$dimensional polytope,
	
	\[
	\Pi(S)= \sum_{i =0}^{k+1} \conv(0, \bar{v_i}),
	\]
	where the sum denotes the Minkowski sum and $\bar{v_i}= (v_i, 1) \in \RR^{d+1}$.

		The open fundamental parallelepiped  $\overset{\circ}{\Pi(S)}$ is the interior of $\Pi(S)$. Then the local $h^*$ polynomial of $S$ is defined as $\ell_S^*(t)= c_0(S) + c_1(S)t + \dots + c_{k+1}(s)t^{k+1}$, where 
	\[c_i(S) =  |\overset{\circ}{\Pi}(S) \cap \{\bar{x} \in \ZZ^{d+1} \, | \, \bar{x}_{d+1}=i \} |,\]
	that is, $c_i$ counts integer points with $(d+1)$-th coordinate equal to $i$ inside the open fundamental parallelepiped $\overset{\circ}{\Pi}(S)$.
\end{definition}

\subsection{Basics of symmetric edge polytopes}
\begin{definition}
Given a finite, simple graph $G = ([n],E)$, the \emph{symmetric edge polytope} (SEP) is defined to be
\[
P_G := \conv\{\pm(e_i - e_j) : ij \in E\}.
\]		
\end{definition}
We define $e_{ij}:=e_i-e_j$. Symmetric edge polytopes live on the hyperplane 
\[
H= \lbrace x_1 + x_2 + \cdots + x_n = 0 \rbrace,
\]
and their dimension is $n-c$, where $c$ is the number of connected components of $G$. We can identify each vertex of $P_G$ with an edge of $G$ with an assigned orientation. In this way every face of $P_G$, or a face of a triangulation of $P_G$ which does not introduce new vertices, can be identified with an oriented subgraph of $G$. We denote by $G(F)$ the oriented subgraph corresponding to a face $F$.

In \cite{FirstSEP}, Proposition 3.2, it was proved that any symmetric edge polytope $P_G$ of a finite simple connected graph $G$ is reflexive and \cite{HJM19} showed also that they also admit regular unimodular triangulations, so its $h^*$-polynomial is palindromic and unimodal.

Moreover, since $P_G$ is reflexive, every affine hyperplane which defines a facet of $P_G$ is of the form 
\[
H_f=\Big\{ x \in H \, | \, \sum_{v \in [n]} f(v)x_n=1 \Big\},
\]

where $f: [n] \longrightarrow \ZZ $ is called \emph{facet defining}.

In \cite{HJM19}, Higashitani, Jochemko and Michalek give the characterization of the facets of $P_G$ in terms of $G$ when $G$ is connected.

\begin{theorem}{\cite[Theorem 3.1]{HJM19}}\label{thm:facetchar}
Let $G=(V,E)$ be a finite simple connected graph. Then $f\colon V\rightarrow \mathbb{Z}$ is facet defining if and only if
\begin{itemize}
	\item[(i)] for any edge $e=uv$ we have $\abs{f(u)-f(v)}\leq 1$, and
	\item[(ii)] the subset of edges $E_f=\{e=uv\in E \colon \abs{f(u)-f(v)}=1\}$ forms a spanning subgraph of $G$.
\end{itemize}
\end{theorem}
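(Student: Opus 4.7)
The plan is to translate ``$f$ is facet-defining'' into a condition on the dimension of the face $F_f := P_G \cap H_f$, and then read this dimension off from the combinatorics of $(V, E_f)$. Evaluating the linear form $x \mapsto \sum_{w \in V} f(w) x_w$ at a vertex $\pm(e_u - e_v)$ of $P_G$ yields $\pm(f(u) - f(v))$, so $P_G$ lies in the closed half-space $\{x \in H : \sum_w f(w) x_w \leq 1\}$ precisely when $|f(u) - f(v)| \leq 1$ for every edge $uv \in E$, which is (i). Under (i), the vertices of $P_G$ that lie on $H_f$ are then exactly
\[
\{\, e_u - e_v : uv \in E,\ f(u) - f(v) = 1 \,\},
\]
i.e.\ the edges of $E_f$ oriented from the higher to the lower $f$-value.

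Assuming (i), $f$ is facet-defining exactly when this set of vertices affinely spans an $(n-2)$-dimensional subspace; equivalently, since $0 \notin H_f$, when its linear span has dimension $n-1$. This linear span coincides with the image of the signed vertex-edge incidence map $\partial \colon \mathbb{R}^{E_f} \to \mathbb{R}^V$ associated to any chosen orientation of $E_f$, and a classical fact from algebraic graph theory gives $\dim \operatorname{Im}(\partial) = |V| - c$, where $c$ denotes the number of connected components of the graph $(V, E_f)$ (with isolated vertices counted as components). Hence $\dim F_f = n - 2$ if and only if $c = 1$, i.e., $E_f$ both covers all of $V$ \emph{and} induces a connected subgraph; this is (ii).

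Combining the two paragraphs yields the claimed equivalence. The main obstacle is the clean passage from linear to affine span in step two, together with the observation that vertices of $V$ not incident to any edge of $E_f$ must be counted as isolated components of $(V, E_f)$, so that ``spanning subgraph'' here is to be read as ``connected spanning subgraph''; once these points are in place the argument reduces to the standard rank formula for the incidence matrix of a graph.
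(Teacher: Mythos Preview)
The paper does not give its own proof of this statement: it is quoted verbatim as \cite[Theorem~3.1]{HJM19} and used as a black box. There is therefore nothing in the paper to compare your argument against.

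Your proof is correct and is essentially the standard one. A couple of remarks worth recording. First, your reading of condition~(ii) as ``$(V,E_f)$ is a \emph{connected} spanning subgraph'' is the intended one; as literally stated (``forms a spanning subgraph'') the condition would be vacuous, since any edge set on $V$ trivially yields a spanning subgraph. Your rank computation $\dim\operatorname{Im}(\partial)=|V|-c$ makes this precise and pins down the correct interpretation. Second, the passage from affine to linear span is handled cleanly: since $H_f$ does not contain the origin, the linear span of the vertices on $H_f$ has dimension exactly one more than their affine span, so $\dim F_f = n-2$ is equivalent to the incidence image having dimension $n-1$, i.e.\ $c=1$. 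One minor point you might make explicit is that, for $f$ to define a \emph{proper} supporting hyperplane (and not all of $H$), one needs $f$ to be nonconstant; but this is automatic once $E_f$ is nonempty, which in turn is forced by connectedness of $(V,E_f)$ when $|V|\geq 2$.
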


A consequence of the previous theorem is the following. 

\begin{corollary}{\cite[Corollary 3.2]{HJM19}}\label{cor:unimodsimplexchar}
	The unimodular simplices contained in a facet $F$ of $P_G$ represented by a function $f$ correspond exactly to oriented spanning trees of $G(F)$, that is, oriented spanning trees consisting of oriented edges $\vec{vw}$ such that $f(v)-f(w)=1$.
\end{corollary}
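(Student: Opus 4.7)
The plan is to deduce this corollary directly from \Cref{thm:facetchar} combined with standard facts about the graphic matroid of $G$.

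First, I would identify which vertices of $P_G$ lie on the facet $F$. A vertex $\pm(e_v-e_w)$ of $P_G$ lies on $F=P_G\cap H_f$ iff $\langle f,\pm(e_v-e_w)\rangle = 1$, i.e.\ iff $f(v)-f(w)=1$ after choosing the correct sign. By condition (i) of \Cref{thm:facetchar}, $|f(v)-f(w)|\in\{0,1\}$ for every edge $vw\in E$, with equality exactly for edges in $E_f$. Hence the vertices of $P_G$ lying on $F$ are in bijection with the oriented edges of $G(F)$, where each edge $vw\in E_f$ is oriented from the endpoint of higher $f$-value to the endpoint of lower $f$-value.

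Next, since $\dim P_G = n-1$ and $\dim F = n-2$, a top-dimensional simplex $\Delta\subseteq F$ has exactly $n-1$ vertices, which by the previous step correspond to oriented edges $\vec{a_i b_i}$ of $G(F)$ with associated lattice vectors $u_i = e_{a_i}-e_{b_i}$ for $i=1,\dots,n-1$. Because $P_G$ is reflexive and $f$ is a primitive integer functional attaining the value $1$ on $F$ and $0$ at the origin, I would argue that $\Delta$ is unimodular in the affine hyperplane $H_f$ (with respect to the induced lattice) if and only if the pyramid $\conv(0,u_1,\dots,u_{n-1})\subseteq\RR^n$ is a unimodular simplex in $\RR^n$. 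Equivalently, since each $u_i$ lies in $H=\{\sum x_j=0\}$, this happens iff $\{u_1,\dots,u_{n-1}\}$ is a $\ZZ$-basis of the lattice $\ZZ^n\cap H$.

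The final step is to invoke the standard correspondence from graphic matroid theory: the vectors $\{e_a-e_b \,:\, ab\in T\}$ form a $\ZZ$-basis of $\ZZ^n\cap H$ if and only if $T$ is a spanning tree of $G$ (this follows from total unimodularity of the signed incidence matrix, together with the fact that the kernel of the map $\ZZ^{E}\to\ZZ^{V}$ is spanned by signed cycle vectors). Condition (ii) of \Cref{thm:facetchar} guarantees that $G(F)$ spans all of $[n]$, so spanning trees contained in $G(F)$ exist and are precisely the sets of $n-1$ oriented edges whose vector images form a basis. Combining these steps yields the claimed bijection between unimodular simplices in $F$ and oriented spanning trees of $G(F)$.

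The main obstacle, I expect, is carefully justifying the bridge in the second step between unimodularity of $\Delta$ inside $H_f$ and unimodularity of the cone from the origin in the ambient $\RR^n$. This requires fixing the correct induced lattice in $H_f$ and exploiting both the reflexivity of $P_G$ and the primitivity of $f$; once this equivalence is in place, the rest of the argument is a routine application of graphic matroid duality.
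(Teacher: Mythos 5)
This statement is quoted from \cite[Corollary 3.2]{HJM19} and the paper at hand does not give its own proof, so there is nothing internal to compare against. Your reconstruction is correct and follows the standard route (and, as far as I can tell, essentially the same route as the original reference): identify the lattice points on the facet via \Cref{thm:facetchar}, reduce unimodularity of an $(n-2)$-simplex $\Delta\subseteq F$ to unimodularity of the pyramid $\conv(0,u_1,\dots,u_{n-1})$ using that the origin lies at lattice height $1$ over $H_f$, and then invoke the classical fact that $\{e_a-e_b:ab\in T\}$ is a $\ZZ$-basis of $\ZZ^n\cap H$ iff $T$ is a spanning tree. One small simplification you might note for the bridge you flag as the main obstacle: primitivity of $f$ on the lattice $\ZZ^n\cap H$ does not require appealing to reflexivity in full; it already follows from conditions (i)--(ii) of \Cref{thm:facetchar}, since those guarantee the existence of an edge $vw$ with $|f(v)-f(w)|=1$, so the differences $f(v)-f(w)$ generate all of $\ZZ$ and hence $f$ induces a primitive functional on $H$. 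With that, the lattice height of the origin over $H_f$ is exactly $1$, and the pyramid-volume identity yields the equivalence between unimodularity of $\Delta$ in $H_f$ and unimodularity of the cone in $H$, as you claim.
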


Further, in \cite{HJM19}, it is shown that symmetric edge polytopes have regular unimodular
triangulations. We will outline their construction here and call these triangulations
\emph{HJM triangulations}. The construction is given in terms of its Stanley-Reisner ideal: that
is, a squarefree ideal $J$ in the ring $K[x_{\alpha} \, : \, \alpha \in P_G \cap \mathbb{Z}^d]$, and the corresponding triangulation $\Gamma$ of $P_G$ consists of all the subsets $S \subseteq P_G \cap \mathbb{Z}^d$ such that conv($S$) is a simplex and $\prod_{\alpha \in S} x_{\alpha} \not \in J$.

In order to construct such $J$, remember at first that the integer points of $P_G$ are precisely $\lbrace 0 \rbrace \cup \lbrace  \pm e_{ij} \, : \, ij \in E(G) \rbrace$, so $K[x_{\alpha} \, : \, \alpha \in P_G \cap \mathbb{Z}^d]$ is naturally identified as 
$K[\lbrace x_e, y_e \rbrace_{e \in E(G)} \cup \lbrace z \rbrace]$ where $z$ is the variable associated to the origin.

In order to simplify notation, in the following, for any oriented edge $e$, we denote by $p_e$ the corresponding variable, i.e.~$p_e=x_e$ or $p_e=y_e$ depending on the orientation. We also set $q_e$ to be equal to the variable with the opposite orientation, i.e.~$\{p_e,q_e\}=\{x_e,y_e\}$.

Then, fix a total order $<$ on $E(G)$ that induces a total order on the variables in $K[\lbrace x_e, y_e \rbrace_{e \in E(G)} \cup \lbrace z \rbrace]$, hence consider the degrevlex order respect to $<$ on the monomials.
$J$ will be the initial ideal respect to $<$ of a toric ideal $I_{P_G}$ which Gr\"obner basis is given by the following Proposition.

\begin{proposition}{\cite[Proposition 3.8]{HJM19}} \label{thm:HJMtriang}
	Let $z< x_{e_1}<y_{e_1}<\dots<x_{e_k}<y_{e_k}$ be an order on the edges. Then the following collection of three types of binomials forms a Gr\"obner basis of the toric ideal of $P_G$ with respect to the degrevlex order:
	\begin{enumerate}
		\item For every $2k$-cycle $C$, with fixed orientation, and any $k$-element subset $I$ of edges of $C$ not containing the smallest edge  
		$$\prod_{e\in I}p_e-\prod_{e\in C\sm I} q_e.$$
		\item For every $(2k+1)$-cycle $C$, with fixed orientation, and any $(k+1)$-element subset $I$ of edges of $C$ 
		$$\prod_{e\in I}p_e-z\prod_{e\in C\sm I}q_e.$$
		\item For any edge $e$ $$x_ey_e-z^2 \, .$$
	\end{enumerate}
	The leading monomial is always chosen to have positive sign.
\end{proposition}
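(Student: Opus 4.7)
The proof splits into showing membership of the listed binomials in the toric ideal, and showing the Gr\"obner basis property. The first part is a computation; the second is where the substance lies.

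First, for \textbf{membership}, recall that $I_{P_G}$ is the kernel of the graded monomial map $\varphi$ sending $z \mapsto s$ and $x_e \mapsto s \cdot t^{e_i - e_j}$, $y_e \mapsto s \cdot t^{e_j - e_i}$ for $e = ij$ with its chosen reference orientation. Type (3) is immediate from $\varphi(x_e y_e) = s^2 = \varphi(z^2)$. For types (1) and (2), the telescoping identity $\sum_{e \in C} \vec{p_e} = 0$ for a coherently oriented cycle $C$ gives, after partitioning $C = I \sqcup (C \sm I)$, the equality $\sum_{e \in I} \vec{p_e} = \sum_{e \in C \sm I} \vec{q_e}$ (since flipping an orientation negates the vector). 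When $|C| = 2k$ and $|I| = k$, both $t$-exponents and $s$-degrees match; when $|C| = 2k+1$ and $|I| = k+1$, the $s$-degrees differ by one and the single factor of $z$ on the right exactly compensates.

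Next, for the \textbf{Gr\"obner basis property}, I would apply Buchberger's criterion, organizing the $S$-pair computations by type. Pairs of type (3) binomials on distinct edges have coprime leading monomials and reduce trivially. An $S$-pair between $x_e y_e - z^2$ and a cycle binomial $B_C$ is nontrivial only when $p_e$ appears in $\mathrm{in}(B_C)$; then one substitutes $x_e y_e \equiv z^2$, and the resulting relation corresponds to a cycle binomial in the graph obtained from $G$ by short-circuiting $C$ at $e$. The principal case is an $S$-pair between cycle binomials on cycles $C_1$ and $C_2$. Here the key identity is that the symmetric difference $C_1 \triangle C_2$, as an element of the cycle space of $G$, decomposes into a disjoint union of cycles $D_1,\dots,D_r$, so the $S$-polynomial rewrites as a sum of type-(1) and type-(2) relations attached to the $D_i$, each already on the list, up to flipping orientations and inserting or removing factors of $z$ via type (3) to match parities.

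The principal obstacle is this last case: organizing the cycle-space arithmetic so that each $S$-pair genuinely reduces to zero, and in particular tracking the parity bookkeeping that arises when two cycles of different parities are combined together with the selection rule ``$I$ does not contain the smallest edge'' that pins down the leading term. A cleaner and more conceptual route I would advocate is to invoke Sturmfels' correspondence between squarefree monomial initial ideals of toric ideals and regular unimodular triangulations of the underlying point configuration. Using the facet characterization \Cref{thm:facetchar} and the description of unimodular simplices \Cref{cor:unimodsimplexchar}, one checks that the abstract simplicial complex whose minimal non-faces are the leading monomials of the listed binomials is a triangulation of $P_G$ induced by a suitable weight vector refining the degrevlex order. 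This replaces the explicit $S$-pair bookkeeping with a purely combinatorial identification and yields the Gr\"obner basis statement directly.
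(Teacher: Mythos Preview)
The paper does not contain a proof of this proposition: it is quoted verbatim as \cite[Proposition 3.8]{HJM19} and used as a black box throughout. There is therefore no proof in the present paper to compare your proposal against.

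As brief feedback on your sketch itself: the membership argument is correct and standard. For the Gr\"obner basis part, your first route via Buchberger's criterion is in principle viable but, as you yourself flag, the $S$-pair between two cycle binomials is delicate---the symmetric difference $C_1 \triangle C_2$ need not be a single cycle, and one must also verify that after decomposing into cycles $D_1,\dots,D_r$ the resulting binomials have leading terms that actually allow reduction in the degrevlex order with the ``smallest edge not in $I$'' constraint. You have not carried this out, only gestured at it. Your second route via Sturmfels' correspondence is the cleaner and more standard one for results of this type, and is closer in spirit to how such statements are typically established; but note that to make it rigorous you would need to exhibit a weight vector inducing the triangulation and verify that the simplicial complex you describe really is a triangulation of $P_G$ (i.e.\ that the maximal non-faces you list cut out exactly the unimodular simplices of \Cref{cor:unimodsimplexchar} and nothing more), which is itself a nontrivial combinatorial check that you have not supplied. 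So the proposal is a correct high-level plan with the main gap being that neither route is actually executed.
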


It is important to note that every maximal simplex of the HJM triangulation contains
the origin. In fact, given a monomial $m$ without the variable $z$, we have

\[
m \in \text{in}_< (I_{P_G}) \Longleftrightarrow z \cdot m \in \text{in}_< (I_{P_G}).
\]

\subsection{Real-rootedness, unimodality and $\gamma$-positivity}
Let $a_0, a_1, \dots, a_d$ be a sequence of non-negative real numbers.
The polynomial $f(t)=\sum_{i=0}^d a_it^i$ of degree $d$ is \emph{real-rooted} if all its roots are real. If $f$ is real-rooted, then the sequence of the coefficients $(a_i)_{i=0}^d$ is \emph{log-concave}, which means that $a_i^2 \geq a_{i-1}a_{i+1}$ for all $i = 1,\ldots, d-1$ \cite{Sta89}. A non-negative log-concave sequence is also \emph{unimodal}, that is, $a_0 \leq a_1 \leq \cdots \leq a_k \geq a_{k+1} \geq \cdots \geq a_d$ for some $k$.

The polynomial $f$ is said to be \emph{palindromic} if $f(t)=t^df(\frac{1}{t})$. A palindromic polynomial can be written as follows, for unique real numbers $\gamma_0, \gamma_1, \dots, \gamma_{\lfloor d/2 \rfloor}$: 

\[
    f(t)= \sum_{i=0}^{\lfloor d/2 \rfloor} \gamma_i t^i(1+t)^{d-2i}.
\]
The polynomial $\sum_i \gamma_i t^i$ is called the \emph{$\gamma$-polynomial} of $f$. We say that the polynomial $f(t)$ is $\gamma$\emph{-nonnegative} if $\gamma_0, \gamma_1, \dots, \gamma_{\lfloor d/2 \rfloor} \geq 0$.
We have that a $\gamma$-nonnegative polynomial is real-rooted if and only if its $\gamma$-polynomial is real-rooted.

It can be seen also that a $\gamma$-nonnegative polynomial is unimodal, so in the palindromic case, even if $\gamma$-positivity and real-rootedness are different notions, they share some nice combinatorial properties.

\section{Lower bound on the number of edges of symmetric edge polytopes}\label{sec:z_2(G)}
This section is devoted to proving a lower bound on  $f_1(P_G)$, the number of edges of the symmetric edge polytope of a connected graph $G = (V, E)$.
We let 
\[
E_3(G):= \lbrace l \in E \, | \, l \text{ is in a 3-cycle of $G$} \rbrace.
\]

We will prove the following lower bound on $f_1(P_G)$.

\begin{theorem} \label{thm:edges_SEP_ineq}
	Let $G$ be a connected graph. Then 
    \[
    f_1(P_G) \geq  |E|(2|V|-5) + |E_3(G)|.
    \]
\end{theorem}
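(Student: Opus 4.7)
The plan is to prove the inequality via a per-vertex estimate on the $1$-skeleton of $P_G$. Since every edge of $P_G$ is incident to two vertices, the theorem is equivalent to the per-vertex claim
\[
\deg_{P_G}(\vec{ab}) \;\geq\; (2|V|-5) - \mathbbm{1}[\{a,b\}\in E_3(G)]
\]
for every oriented edge $\vec{ab}$ of $G$ (viewed as a vertex $e_a-e_b$ of $P_G$). Summing this estimate over all $2|E|$ oriented edges and dividing by $2$ yields the theorem.

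For each fixed $\vec{ab}$ I would first identify the unavoidable non-neighbours. By reflexivity, $0\in\inter(P_G)$, so the antipodal vertex $\vec{ba}$ cannot span an edge with $\vec{ab}$: the origin lies on the interior of the midpoint segment. Whenever $\{a,b,c\}$ is a $3$-cycle of $G$, the oriented edges $\vec{bc}$ and $\vec{ca}$ are also non-neighbours of $\vec{ab}$: their midpoints with $\vec{ab}$ are $\tfrac{1}{2}(e_a-e_c)$ and $\tfrac{1}{2}(e_c-e_b)$, each of which lies on the open segment from the interior point $0$ to a genuine vertex of $P_G$, hence in $\inter(P_G)$. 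This geometric observation is the source of the correction $\mathbbm{1}[\{a,b\}\in E_3(G)]$.

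The heart of the proof would then be exhibiting sufficiently many genuine edges of $P_G$ at $\vec{ab}$ using the facet description of \Cref{thm:facetchar}. For each third vertex $c\in V\sm\{a,b\}$ I would construct explicit facet-defining functions $f\colon V\to\mathbb{Z}$ with $f(a)-f(b)=1$, and verify that on the resulting facet, $\vec{ab}$ together with a designated second oriented edge (depending on the adjacency of $c$ to $a$ and $b$) spans a true $1$-face of $P_G$. Iterating over $c$ produces a pool of roughly $2(|V|-2)$ candidate neighbours, and the claimed local bound is obtained as this pool minus $1$ for the antipode and minus at most $1$ for the triangle obstruction.

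The main obstacle will be the combinatorial bookkeeping: ensuring that every candidate pair really is a $1$-face of $P_G$ rather than a pair of vertices on a common higher-dimensional face, and verifying that the $3$-cycle loss is controlled by a single indicator rather than by the number of triangles through $\{a,b\}$. The latter requires showing that distinct triangles through a fixed $\{a,b\}$ remove candidate neighbours associated with \emph{different} third vertices $c$, so that the per-vertex correction saturates at $1$. I expect this subcase analysis, partitioning the third vertices $c$ by their adjacency pattern to $\{a,b\}$ and carefully matching candidate neighbours to forced non-neighbours, to be the delicate analysis of the local contribution of each edge developed in \Cref{sec:z_2(G)}.
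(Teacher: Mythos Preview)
Your per-vertex estimate
\[
\deg_{P_G}(\vec{ab}) \;\geq\; (2|V|-5) - \mathbbm{1}\bigl[\{a,b\}\in E_3(G)\bigr]
\]
is false in general, so the proposed reduction does not go through. A concrete counterexample is the ``two-page book'': take two induced $4$-cycles glued along a common edge $l=\{1,2\}$, i.e.\ $V=\{1,\dots,6\}$ and $E=\{12,23,34,41,25,56,61\}$. Here $|V|=6$, $l\notin E_3(G)$, so your bound predicts $\deg_{P_G}(e_{12})\geq 7$. But one checks directly (using the edge criterion that two oriented edges span an edge of $P_G$ iff they are not in a common oriented $3$- or $4$-cycle) that the neighbours of $e_{12}$ are exactly $e_{32},e_{43},e_{14},e_{52},e_{65},e_{16}$, so $\deg_{P_G}(e_{12})=6$. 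In the paper's notation this is $Z(G,l)=-2$.

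The point you are missing is that \emph{induced} $4$-cycles through $\{a,b\}$ (what the paper calls \emph{pages}) are a second obstruction, independent of the triangle obstruction you identified. Each new page through $l$ kills two candidate neighbours while only supplying one new vertex of $P_G$, for a net loss in $Z(G,l)$. Unlike the triangle loss, this loss is \emph{not} bounded by a single indicator: several inequivalent pages through the same edge can drive $Z(G,l)$ arbitrarily negative. The paper therefore does \emph{not} prove a uniform per-edge bound. Instead it proves (\Cref{inequalityedge}) that at most one edge $l$ can have $Z(G,l)<0$, and in that case one can exhibit a partner edge $l'$ (the opposite side of a page of $l$) with $Z(G,l)+Z(G,l')\geq 0$; summing then gives $z_2(G)\geq 0$. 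Establishing this pairing requires organizing the pages of $l$ into equivalence classes under the relation ``share two edges'' and tracking how $Z$ evolves as $G$ is built up edge by edge in a prescribed order---this is the ``delicate analysis'' referred to, and it is not a bookkeeping matter that can be absorbed into your candidate-neighbour count.
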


As a first step, we characterize  $f_1(P_G)$  purely in terms of the graph. We use the notation $OE(G)$ to denote the set of all possible orientations of edges of $G$.

\begin{lemma}
    For a connected graph $G$, we have 
    \[
    f_1(P_G) =  |\underbrace{\bigl\{ \lbrace \vec{l_1}, \vec{l_2} \rbrace \, | \, \vec{l_1}, \vec{l_2} \in OE(G), \, l_1 \neq l_2 \text{ , $\vec{l_1}, \vec{l_2}$ not in a common oriented 3 or 4-cycle of $G$} \bigr\}}_{=:F_1(G)}|.
    \]

\end{lemma}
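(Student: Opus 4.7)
The plan is to characterize, for each pair of distinct non-antipodal vertices $v_1 = e_a - e_b$ and $v_2 = e_c - e_d$ of $P_G$ (corresponding to oriented edges $\vec{l_1}, \vec{l_2}$ with distinct underlying edges, so in particular $v_1 \neq -v_2$), exactly when the segment $[v_1, v_2]$ is an edge of $P_G$. I start from the standard fact that $\{v_1, v_2\}$ spans an edge of a convex polytope if and only if the midpoint $m := (v_1 + v_2)/2$ does not lie in $\mathrm{conv}(V(P_G) \setminus \{v_1, v_2\})$.

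My next step is to rephrase membership of $m$ in this convex hull as a flow condition on $G$. If $m = \sum_k \lambda_k w_k$ is a convex representation with $w_k = e_{p_k} - e_{q_k} \in V(P_G) \setminus \{v_1, v_2\}$, then setting $\mu_k = 2\lambda_k$ yields $v_1 + v_2 = \sum_k \mu_k w_k$ with $\sum_k \mu_k = 2$. Interpreting each $\mu_k$ as flow on the oriented edge $\vec{p_k q_k}$ (distinct from $\vec{l_1}, \vec{l_2}$), this is a non-negative flow on $G$ with prescribed net outflows $+1$ at $a$ and $c$, $-1$ at $b$ and $d$, $0$ elsewhere, and with total edge weight equal to exactly $2$.

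The quantitative tool is the triangle inequality $\|v_1 + v_2\|_1 \le \sum_k \mu_k \|w_k\|_1 = 2 \sum_k \mu_k = 4$. I then case-split on the overlap between $l_1$ and $l_2$. When the two edges are disjoint, or share an endpoint in the \emph{same} orientation role ($a = c$ or $b = d$), one has $\|v_1 + v_2\|_1 = 4$, so equality in the triangle inequality forces that at each vertex of $G$ the representation is purely outgoing or purely incoming, and vertices outside $\{a, b, c, d\}$ carry no flow. An enumeration of allowed oriented edges shows that in the same-role subcase no valid representation exists, so $\{v_1, v_2\}$ is an edge; this matches the fact that two oriented edges sharing a source or a sink cannot coexist in any directed cycle, since each vertex of a directed cycle has in-degree and out-degree $1$ in it. In the disjoint subcase, the only feasible representation is $\vec{ad} + \vec{cb}$, requiring $\{a, d\}, \{b, c\} \in E$ and producing precisely the directed $4$-cycle $a \to b \to c \to d \to a$ containing both $\vec{l_1}$ and $\vec{l_2}$.

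When $\vec{l_1}, \vec{l_2}$ share an endpoint in \emph{opposite} orientation roles (for instance $b = c$, so $v_1 = \vec{ab}, v_2 = \vec{bd}$), we have $\|v_1 + v_2\|_1 = 2$, and the flow reduces to a single-commodity flow of value $1$ from $a$ to $d$ avoiding the forbidden oriented edges $\vec{l_1}, \vec{l_2}$. Since adding backward flow only increases the total edge weight, the minimum $\sum_k \mu_k$ equals the shortest-path distance in the allowed oriented graph, so a representation with $\sum_k \mu_k = 2$ exists if and only if this distance is at most $2$: distance $1$ means $\{a, d\} \in E$, giving the directed $3$-cycle $a \to b \to d \to a$, while distance $2$ means a common neighbor $x \neq b$ of $a$ and $d$, giving the directed $4$-cycle $a \to b \to d \to x \to a$. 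Combining all cases and summing over unordered pairs produces the asserted identity $f_1(P_G) = |F_1(G)|$. The main technical subtlety is the opposite-role case, where one must carefully verify that cancellations cannot reduce the edge weight sum below the shortest-path distance, ruling out any contribution from representations built on longer paths.
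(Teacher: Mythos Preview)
Your overall strategy---reducing the edge test to a convex-hull membership problem for the midpoint and then interpreting it as a flow problem on $G$---is different from the paper's approach, which constructs supporting hyperplanes directly. The flow interpretation is elegant and gives a nice conceptual picture. However, there is a genuine gap.

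\textbf{The midpoint criterion is not a valid general fact.} The statement ``$\{v_1,v_2\}$ spans an edge iff the midpoint $m$ is not in $\conv(V(P)\setminus\{v_1,v_2\})$'' fails already for the centrally symmetric hexagon $P=\conv\{(\pm 3,0),\pm(1,1),\pm(0,1)\}$. The segment from $(3,0)$ to $(0,1)$ is a diagonal (hence not an edge), yet its midpoint $(3/2,1/2)$ lies outside $\conv\{(1,1),(-3,0),(-1,-1),(0,-1)\}$, since every point of the latter set has first coordinate at most $1$. The correct criterion is that the \emph{open segment} $(v_1,v_2)$ be disjoint from $\conv(V(P)\setminus\{v_1,v_2\})$.

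\textbf{Where this bites.} In your disjoint and ``same-role'' cases there is no harm: for every $t\in(0,1)$ the point $p_t=tv_1+(1-t)v_2$ has $\|p_t\|_1=2$, so your $\ell_1$-equality argument actually shows that \emph{no} $p_t$ other than the midpoint can lie in $\conv(V\setminus\{v_1,v_2\})$; thus checking $m$ alone is legitimate there. The problem is the ``opposite-role'' case $v_1=e_a-e_b$, $v_2=e_b-e_d$: here $\|p_t\|_1=2\max(t,1-t)<2$, the $\ell_1$ bound is slack, and your shortest-path analysis only treats $t=1/2$. Knowing $m\notin\conv(V\setminus\{v_1,v_2\})$ does not, by itself, force $[v_1,v_2]$ to be an edge of $P_G$.

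\textbf{How to close it.} Either extend the flow argument to all $t\in(0,1)$ (one can show the minimum total weight is a convex function of $t$ with minimum at $t=1/2$, but this needs proof), or---much quicker---construct a supporting functional when $d_{\min}\ge 3$: set $c_a=2$, $c_b=0$, $c_d=-2$, and choose small $c_v>0$ for neighbours of $a$ and $c_v<0$ for neighbours of $d$ (the hypothesis $d_{\min}\ge 3$ rules out common neighbours and the edge $\{a,d\}$, so these choices are compatible). This yields $c\cdot v_1=c\cdot v_2=2>c\cdot w$ for all other vertices $w$, certifying the edge. This last step is essentially the paper's method, which builds the separating hyperplane directly in each case rather than passing through a midpoint test.
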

\begin{proof}
    Consider two vertices $e_{ij}$ and $e_{kl}$ of $P_G$, which correspond to two oriented edges $\vec{l_1} = (i,j)$ and $\vec{l_2}=(k,l)$ of $G$. These two vertices form an edge of $P_G$ if there is a hyperplane $H_c = \{x \in \mathbb{R}^n \mid c^T\cdot x = b\}$ which supports $P_G$ and contains $e_{ij}$ and $e_{kl}$ and no other vertex of $P_G$. Since $c^T \cdot e_{st} = c_t -c_s$ for any vertex $e_{st}$, we need to prove that there exists a vector $c$ and scalar $b$ which satisfy 

    \begin{align}\label{edges_sep}
    & c_j-c_i = c_l - c_k = b \\
    & c_s-c_t < b \text{ for all }e_{st} \text{ other vertices of }P_G
    \end{align}
    exactly when the conditions above are satisfied, that is, when the oriented edges $(i,j)$ and $(k,l)$ of the graph are not in oriented $3-$ or $4-$cycles.

    We distinguish two cases: when $i, j, k, l$ are $4$ distinct vertices of the graph $G$ and when two coincide to give us exactly $3$ distinct vertices of $G$.

    If $i, j, k, l$ are distinct, assume without loss of generality that they are $1, 2, 3, 4$; in this case, we have to show that the two oriented edges $(i,j)$ and $(k,l)$ are not in an oriented $4$-cycle.  In order to satisfy the conditions \ref{edges_sep}, $c$ must be of the form $(-a, b-a, a, b+a, c_5, \dots, c_n)$ (up to adding a constant to all entries), with $|c_i| < a$.
    Then to make sure that all vertices of $P_G$ except the two chosen ones are on the hyperplane, the only candidates are $(l, i)$ and $(j,k)$. Their scalar product with $c$ is $b-2a$ and $b+2a$, so both cannot correspond to vertices of $P_G$, as we wanted. 

    The case where we have three vertices $i, j, k$ is similar.    
\end{proof}

We can then reformulate \cref{thm:edges_SEP_ineq} by defining the function
\begin{equation}\label{def: z_2}
z_2(G):= f_1(P_G) - |E|(2|V|-5) - |E_3(G)|.
\end{equation}
The theorem is then equivalent to showing that $z_2(G) \geq 0$.

To prove this, we split $z_2(G)$ into a sum whose terms depend on single edges of $G$, and upper bound each of these terms. First of all, for any pair of edges $l_1$ and $l_2$ of $G$, we define
\[
	f(G,l_1,l_2):= \big| \bigl\{  \lbrace \vec{l_1}, \vec{l_2} \rbrace  \, | \, \vec{l_i} \text{ is an orientation of $l_i$ for $i=1,2$} \bigr\} \cap F_1(G) \big|
\]

\begin{remark}
	$f(G,l_1,l_2) \in \lbrace 0,2,4 \rbrace$ for any $l_1,l_2 \in E$. In particular:
	\begin{itemize}
		\item[\textbf{(i)} \hspace{1mm}] $f(G,l_1,l_2)=0$ if $l_1=l_2$, or if $l_1$ and $l_2$ are in a 4-clique of $G$ and they are not adjacent; 
		\item[\textbf{(ii)} ] $f(G,l_1,l_2)=2$ if $l_1$ and $l_2$ are in a 3-cycle of $G$, or the induced subgraph of $G$ respect to the vertices of $l_1$ and $l_2$ contains a 4-cycle but it's not a 4-clique; 
		\item[\textbf{(iii)}] $f(G,l_1,l_2)=4$ if $l_1$ and $l_2$ are not in any 3 or 4-cycle of $G$.
	\end{itemize}
\end{remark} 

Now, for any $l \in E$, we define the \emph{contribution of $l$ in $f_1(G)$ } as the set

\[
	F(G,l):= \bigl\{ \lbrace \vec{l_1}, \vec{l_2} \rbrace \in f_1(G) \, | \, \vec{l_1} \text{ is an orientation of } l \bigr\},
\]
and, setting $v_G:= |V(G)|$, we define the function $Z(G,l)$ as follows

\begin{equation} \label{zeta}
Z(G,l):= |F(G,l)| - 2(2v_G - 5) - 2 \cdot \mathbb{1}_{E_3(G)}(l)
\end{equation}
where $\mathbb{1}_{E_3}$ is the indicator function of $E_3$.

It's clear by the definitions that

\begin{equation} \label{sommacontributi}
	|F(G,l)|= \sum_{f \in E} f(G,l,f),
\end{equation}

and

\begin{equation} \label{z_2(G)}
	z_2(G)= \frac{1}{2} \sum_{l \in E} Z(G,l).
\end{equation}

What we will do is to show that $Z(G,l)$ has a good behavior, as stated in the following lemma.

\begin{proposition} \label{inequalityedge}
	Let $G$ be a connected graph. Then, for at most one $l \in E$ it holds that
	\begin{equation}
		Z(G,l) < 0,
	\end{equation}
	and if there is such an $l$, then there exists $l' \in E$ such that
	\begin{equation}
		Z(G,l) + Z(G,l') \geq 0.
	\end{equation}
\end{proposition}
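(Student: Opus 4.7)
The plan is to reduce the claim to a purely local analysis of each edge $l = uv \in E$. Setting
\[
    a(l) = |\{f \in E : f(G,l,f) = 4\}|, \quad b(l) = |\{f \in E : f(G,l,f) = 2\}|, \quad c(l) = |\{f \in E\setminus\{l\} : f(G,l,f) = 0\}|,
\]
the preceding remark gives $a(l) + b(l) + c(l) = |E|-1$ and $|F(G,l)| = 4a(l) + 2b(l)$. Substituting into \eqref{zeta} yields
\[
    Z(G,l) \;=\; 4(|E| - v_G) + 6 - 2\,b(l) - 4\,c(l) - 2\,\mathbb{1}_{E_3(G)}(l),
\]
so that $Z(G,l) < 0$ is equivalent to
\[
    b(l) + 2\,c(l) + \mathbb{1}_{E_3(G)}(l) \;\geq\; 2(|E| - v_G) + 4.
\]

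The core technical step will be to upper-bound $b(l) + 2c(l)$ by local invariants of $l=uv$. Let $t(l) = |N(u) \cap N(v)|$ be the number of triangles through $l$ and $k(l)$ the number of edges inside $N(u) \cap N(v)$, so that $c(l) = k(l)$ exactly and $b(l)$ collects contributions from the $2t(l)$ edges of triangles through $l$, from 4-cycles through $l$ not extending to a 4-clique, and from adjacent edges which are part of such a 4-cycle. By carefully enumerating these local contributions---each of which involves only vertices in $\{u,v\} \cup N(u) \cap N(v)$ together with one extra vertex---and double-counting 4-cycles through $l$, I will produce a linear upper bound on $b(l) + 2c(l)$ in $t(l)$, $k(l)$, $\deg(u)$ and $\deg(v)$. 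Comparing this against $2(|E| - v_G) + 4$ then forces any edge $l$ with $Z(G,l)<0$ to sit inside a very rigid local configuration: almost every edge of $G$ must lie in a 4-vertex subset $\{u,v,w_1,w_2\}$ with $w_1,w_2$ common neighbors of $u$ and $v$, and $N(u)\cap N(v)$ must be almost complete.

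Given this rigidity, the ``at most one'' claim will follow because a second bad edge $l' \neq l$ would need its own rigid neighborhood, and the only way both can be accommodated simultaneously in a simple graph $G$ is if $l$ and $l'$ share most of their neighborhood, which I will show leads to a contradiction with the tight inequality defining rigidity. For the compensation part, assuming the unique bad $l$ exists, I plan to exhibit $l'$ explicitly---the natural candidate is a ``diagonal'' edge $w_1 w_2$ with $w_1,w_2 \in N(u) \cap N(v)$, which exists precisely when $c(l)>0$ (and $c(l)>0$ is itself essentially forced by the rigidity). For such $l'$ the local invariants $b(l')+2c(l')+\mathbb{1}_{E_3(G)}(l')$ are strictly smaller than those of $l$, because $l'$ is ``peripheral'' relative to the rigid configuration, yielding $Z(G,l')$ large enough to cancel $Z(G,l)$ via a direct comparison.

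The main obstacle will be the careful bookkeeping needed to transfer local counts between $l$ and $l'$: many edges contribute simultaneously to $b(l)$ and $b(l')$ (through shared triangles and 4-cycles), and one must show that the net change $Z(G,l') - Z(G,l)$ is bounded below by a quantity depending only on the rigid local data. A secondary difficulty is handling the small-graph boundary regime, where one cannot cleanly separate local from global parameters; here the compensating $l'$ should be identified by direct inspection rather than by the generic diagonal construction.
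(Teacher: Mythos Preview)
Your formula $Z(G,l) = 4(|E|-v_G) + 6 - 2b(l) - 4c(l) - 2\,\mathbb{1}_{E_3(G)}(l)$ is correct, but the plan built on it has a genuine gap.

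The decisive problem is your choice of the compensating edge $l'$. You propose to take a ``diagonal'' $w_1w_2$ with $w_1,w_2 \in N(u)\cap N(v)$, arguing that $c(l)>0$ is forced by rigidity. This is false. Take the \emph{book graph} $B_m$: vertices $u,v,a_1,b_1,\dots,a_m,b_m$ and edges $uv$, $ua_i$, $a_ib_i$, $b_iv$ for $1\le i\le m$. Then $l=uv$ has $N(u)\cap N(v)=\emptyset$, so $t(l)=k(l)=c(l)=0$, yet $b(l)=3m$, $|E|-v_G=m-1$, and $Z(G,l)=2-2m<0$ for every $m\ge 2$. No diagonal edge exists. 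Your fallback (``direct inspection in the small-graph boundary regime'') does not apply either, since $m$ is arbitrary. The rigidity picture---``almost every edge lies in a 4-vertex subset with $N(u)\cap N(v)$ almost complete''---is simply not what happens; what makes $l$ bad is many \emph{independent induced} 4-cycles through $l$, not a dense common neighbourhood.

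The paper's proof is organised entirely differently. It builds $G$ from $\{l\}$ through a canonical sequence of moves (S1)--(S5) and tracks how $Z(\cdot,l)$ changes along the way. The controlling invariant is $B(G,l)$, the number of equivalence classes of \emph{pages} (induced 4-cycles through $l$) under the relation ``share two edges'', yielding the bound $Z(G,l)\ge 2 - 2\,\mathbb{1}_{E_3}(l) - 2B + 4N$ where $N$ counts moves that add edges far from $l$. One then \emph{fixes} $l$ to maximise $B(G,l)$ and takes $l'$ to be the edge \emph{opposite} $l$ in a suitable page (so $l'=a_1b_1$ in the book example, not a diagonal). A careful case analysis shows $N' \ge B-1$ and symmetrically $N \ge B'-1$, which gives $Z(G,l)+Z(G,l')\ge 0$. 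The ``at most one'' part is obtained by showing directly that every edge $f\neq l$ has $Z(G,f)\ge 0$, again via page-counting against the maximiser $l$. None of this is captured by the degree/triangle bookkeeping you outline.
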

In particular Proposition \ref{thm:edges_SEP_ineq} is proven if we apply Proposition \ref{inequalityedge} in \ref{z_2(G)}.

What we want to do is to "compute" $Z(G,l)$ in a recursive way, starting from the subgraph $G_0= \lbrace l \rbrace$ , and then construct a sequence of subgraphs of $G$
\[
\lbrace l \rbrace=G_0 \stackrel{m_{i_0}} \longrightarrow  G_1 \stackrel{m_{i_1}} \longrightarrow \cdots \stackrel{m_{i_{k-1}}} \longrightarrow G_k=G,
\] 
where $G_{i+1}$ is obtained by $G_i$ performing one of the following moves:

\begin{itemize}
	
	\item[\textbf{m1)} :] Adding a new vertex $w$ and a new edge $\lbrace v,w \rbrace$, where $v$ is a vertex of $G_i$. 
	\item[\textbf{m2)} :] Adding an edge of $G$ to $G_i$ with vertices in $V(G_i)$.
\end{itemize}

It's important to observe that \textbf{m1} preserves the Z function: in other words, if $G_i \stackrel{m1} \longrightarrow  G_{i+1}$, then $Z(G_{i+1},l)=Z(G_i,l)$. This is true because $v_{G_{i+1}}=v_{G_i}+1$ and the added edge $\lbrace v,w \rbrace$ is not in any cycle of $G_{i+1}$, giving us $f(G_{i+1},l,\lbrace v, w \rbrace)=4$. For \textbf{m2} the situation is more complicated: there are some cases in which, respectively, the Z function increases, decreases, or remains the same. Here some examples are reported.

	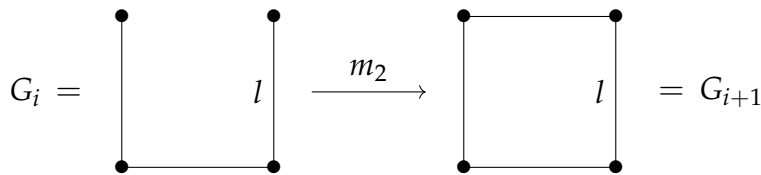
\begin{figure}[!ht]
		\begin{tikzpicture}[scale=0.5]
			
			\node at (0,0) 	   	 	{$\bullet$};
			\node at (0,4)			{$\bullet$};
			\node at (4,0) 			{$\bullet$};
			\node at (4,4) 			{$\bullet$};
			\node at (-2,2) 		{$G_i \, =$};
			
			\node at (9,0) 	   	 		{$\bullet$};
			\node at (9,4) 	   	 		{$\bullet$};
			\node at (13,0) 			{$\bullet$};
			\node at (13,4) 			{$\bullet$};
			\node at (15.5,2) 		{$= \, G_{i+1} $};
			
			\draw (5,2) edge[->] node[above] {$m_2$} (8,2);
						
			\path
			(4,0) edge node {\hspace{-4mm}$l$} (4,4)
			(4,0) edge node {} (0,0)
			(0,0) edge node {} (0,4);
			
			\path 
			(13,4) edge node {\hspace{-4mm}$l$} (13,0)
			(13,0) edge node {} (9,0)
			(9,0) edge node  {} (9,4)
			(9,4) edge node  {} (13,4);
		\end{tikzpicture}
		\caption{$Z$ decreases: $Z(G_i,l)=2$, meanwhile $Z(G_{i+1},l)=0$} \label{fig:decrescenza}
	\end{figure}
	
	\begin{figure}[!ht]
		\begin{tikzpicture}[scale=0.5]
			
			\node at (0,2) 	   	 	{$\bullet$};
			\node at (6,2)			{$\bullet$};
			\node at (3,0) 			{$\bullet$};
			\node at (3,4) 			{$\bullet$};
			\node at (-2,2) 		{$G_i \, =$};
			
			\node at (0+11,2) 	   	 	{$\bullet$};
			\node at (6+11,2)			{$\bullet$};
			\node at (3+11,0) 			{$\bullet$};
			\node at (3+11,4) 			{$\bullet$};
			\node at (19.5,2) 		{$= \, G_{i+1}$};
			
			\draw (7,2) edge[->] node[above] {$m_2$} (10,2);
			
			\path
			(3,0) edge node {\hspace{-4mm}$l$} (3,4)
			(3,0) edge node {} (0,2)
			(0,2) edge node {} (3,4)
			(6,2) edge node {} (3,4);
			
			\path 
			(3+11,0) edge node {\hspace{-4mm}$l$} (3+11,4)
			(3+11,0) edge node {} (0+11,2)
			(0+11,2) edge node {} (3+11,4)
			(6+11,2) edge node {} (3+11,4)
			(3+11,0) edge node {} (6+11,2);
		\end{tikzpicture}
		\caption{$Z(G_i,l)= 0 = Z(G_{i+1},l)$} \label{fig:uguaglianza}
	\end{figure}
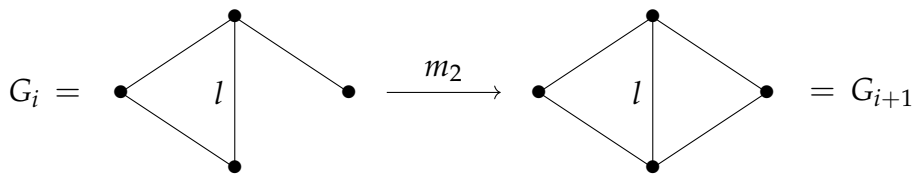
	
	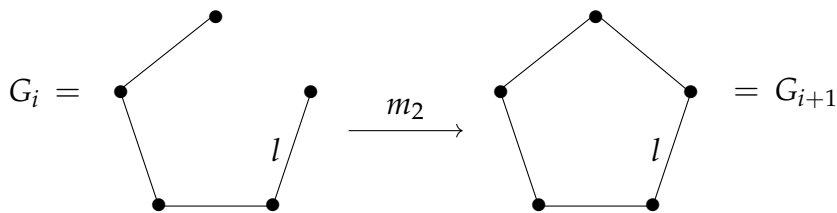
\begin{figure}[!ht]
		\begin{tikzpicture}[scale=0.5]
			
			\node at (0,0) 	   	 	{$\bullet$};
			\node at (3,0)			{$\bullet$};
			\node at (-1,3) 		{$\bullet$};
			\node at (4,3) 			{$\bullet$};
			\node at (1.5,5) 		{$\bullet$};
			\node at (-3,3)			{$G_i \, =$};
			
			\node at (0+10,0) 	   	 	{$\bullet$};
			\node at (3+10,0)			{$\bullet$};
			\node at (-1+10,3) 		{$\bullet$};
			\node at (4+10,3) 			{$\bullet$};
			\node at (1.5+10,5) 		{$\bullet$};
			\node at (6+10.5,3)			{$= \, G_{i+1}$};
			
			\draw (5,2) edge[->] node[above] {$m_2$} (8,2);
			
			\path
			(3,0) edge node {\hspace{-4mm}$l$} (4,3)
			(3,0) edge node {} (0,0)
			(0,0) edge node {} (-1,3)
			(-1,3.1) edge node {} (1.5,5.1);
			
			\path 
			(3+10,0) edge node {\hspace{-4mm}$l$} (4+10,3)
			(3+10,0) edge node {} (0+10,0)
			(0+10,0) edge node {} (-1+10,3)
			(-1+10,3.1) edge node {} (1.5+10,5.1)
			(4+10,3) edge node {} (1.5+10,5.1);
		\end{tikzpicture}
		\caption{$Z$ increases: $Z(G_i,l)=2$, meanwhile $Z(G_{i+1},l)=6$} \label{fig:decrescenza2}
	\end{figure}

 However, we can control the behavior of Z also for these kind of moves if we consider proper sequences of subgraphs, which are constructed as follows.
\begin{itemize}
	\item[(S1)] From $G_0$ perform \textbf{m1} for any vertex of $G$ adjacent to one of the vertices of $l$. At the end we obtain the subgraph $G^{(1)}$. 
	\item[(S2)] Add all the remaining edges incident to $l$ with moves \textbf{m2}, ending with a subgraph $G^{(2)}$.
	\item[(S3)] Add the rest of the edges with vertices in $V(G^{(1)})$ with moves \textbf{m2} , obtaining the subgraph $G^{(3)}$.
	\item[(S4)] Perform \textbf{m1} for any remaining vertex , obtaining $G^{(4)}$.
	\item[(S5)] Complete to $G$ adding the rest of the edges with moves \textbf{m2}.
\end{itemize}

What we do is to examine all the 5 steps described above. For what we said about \textbf{m1}, the steps (S1) and (S4) don't change the value of Z with respect to  $l$, so  $Z(G^{(1)},l)=Z(G_0,l)=2$ and $Z(G^{(4)},l)=Z(G^{(3)},l)$. Also for (S5) it's quite simple, because every edge $f$ added in this step doesn't create any new 3 or 4-cycle that contains $l$, so $f(G_{i+1},l,f)=4$ and the other contributions $f(G_{i+1},l,l')$ don't change, hence $Z(G_{i+1},l)= Z(G_i,l) + 4$. So at the end of the process we have
\[
	Z(G,l) = Z(G^{(4)},l) + 4(|E| - |E(G^{(4)})|).
\]

Next we study (S2). As it can be seen, all the 3-cycles of $G$ containing $l$ are constructed in this process, and moreover each move \textbf{m2} constructs a unique 3-cycle of this type. It's also important to notice that here we do not construct any other cycle of length greater than 3. The next lemma explains what happens to Z in this process.

\begin{lemma} \label{decrease}
 	Let $G_i \stackrel{m_2} \longrightarrow  G_{i+1}$ be a move in (S2). Then it holds that:
 	\begin{itemize}
 		\item[(a)] $Z(G_{i+1},l) = Z(G_i,l) - 2$ if it is the first move of (S2),
 		\item[(b)] $Z(G_{i+1},l) = Z(G_i,l) $ otherwise.
 	\end{itemize}
 	In particular we have that
 	\[
		Z(G^{(2)},l) = 
		\begin{cases}
			2  & \text{ if } G^{(2)}=G^{(1)} \\
			0  & \text{ otherwise}.
		\end{cases}	
 	\]
\end{lemma}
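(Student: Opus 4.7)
The plan is a direct computation of $Z(G_{i+1},l)-Z(G_i,l)$ from the definition in \eqref{zeta}. A move in (S2) adds an edge $m=\{u,w\}$ (say) where $l=\{u,v\}$ and $w$ is a vertex adjacent to both $u$ and $v$ in $G$, with $\{v,w\}$ already present (having been inserted during (S1)). In particular $w$ is a pendant of $G_i$ attached solely to $v$, and adding $m$ creates precisely the $3$-cycle $\{u,v,w\}$ through $l$.

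First I record two easy bookkeeping facts. The vertex set does not grow along an (S2)-move, so the summand $-2(2v_G-5)$ in $Z$ is unaffected. Next, before any (S2)-move the edge $l$ is not contained in any $3$-cycle of $G_i$, whereas after the first (S2)-move it is; hence $\mathbb{1}_{E_3(G)}(l)$ jumps from $0$ to $1$ precisely at that first move and then stays constant, contributing $-2$ to $\Delta Z$ only at the first move.

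The main step is then to show that $|F(G,l)|$ is invariant under every (S2)-move. Using $|F(G,l)|=\sum_{f\in E}f(G,l,f)$ from \eqref{sommacontributi}, I split $\Delta|F(G,l)|$ into the contribution of the new edge and the net change over the edges already present. The new edge contributes $f(G_{i+1},l,m)=2$ since $l$ and $m$ lie in the triangle $\{u,v,w\}$. For any $f\in E(G_i)$ the value $f(G,l,f)$ depends only on the induced subgraph on $V(l)\cup V(f)$, and this induced subgraph is altered by the insertion of $m=\{u,w\}$ only if $w\in V(f)$; since $\{v,w\}$ is the unique edge of $G_i$ incident to $w$ (by the pendant observation), only $f(G,l,\{v,w\})$ can change. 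It passes from $4$ (the induced subgraph on $\{u,v,w\}$ carries only the two edges $l$ and $\{v,w\}$, so no $3$-cycle is formed) to $2$ (the triangle $\{u,v,w\}$ is now present), a change of $-2$. Thus $\Delta|F(G,l)|=2-2=0$, and combined with the indicator analysis this gives parts (a) and (b).

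The displayed consequence follows by initialising $Z(G_0,l)=0-2(2\cdot 2-5)-0=2$ and recalling that (S1) preserves $Z$ because each $m_1$-move does (as already noted in the paragraph introducing (S1)--(S5)): if $G^{(2)}=G^{(1)}$ then no (S2)-move is performed and $Z(G^{(2)},l)=2$; otherwise a single decrement by $2$ at the first (S2)-move, followed by no further change, gives $Z(G^{(2)},l)=0$. The only point requiring care is the structural claim that $w$ is a pendant of $G_i$ and, relatedly, that adding $m$ cannot alter any $f(G,l,f')$ for an edge $f'$ not incident to $w$; this is immediate because at the start of the current move every edge of $G_i$ is incident to $\{u,v\}$, so no $3$- or $4$-cycles beyond the new triangle $\{u,v,w\}$ can affect the relevant induced subgraphs.
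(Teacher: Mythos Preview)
Your proof is correct and follows essentially the same approach as the paper's: show that $|F(G,l)|$ is invariant under every (S2)-move and that the indicator $\mathbb{1}_{E_3}(l)$ jumps only at the first such move. One caveat worth flagging: the general assertion that ``$f(G,l,f)$ depends only on the induced subgraph on $V(l)\cup V(f)$'' is not true for adjacent edges in an arbitrary graph, since a $4$-cycle through a fourth vertex outside $V(l)\cup V(f)$ can lower the value from $4$ to $2$. In the specific setting of (S2) this is harmless---your pendant observation about $w$ and the fact that every edge of $G_i$ is incident to $\{u,v\}$ (as you note in the last paragraph) rule out any such external $4$-cycle---but the argument should rest on those structural facts directly rather than on the general induced-subgraph claim.
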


\begin{proof}
	We observed that a step $G_i \stackrel{m_2} \longrightarrow  G_{i+1}$ in (S2) adds an edge $f$ which completes a 3-cycle with edges $\lbrace l,f, g \rbrace$, so $f(G_{i+1},l,g)= f(G_{i+1},l,f) = 2$ . Since in $G_i$ the edge $g$ is not in any 3-cycle with $l$, $f(G_i,l,g)=4$, but notice that $f(G_{i+1},l,l')= f(G_i,l,l')$ for any $l' \neq g$ edge of $G_i$, giving us (using \ref{sommacontributi})
	\[
		\sum_{l' \in E(G_i)} f(G_{i+1},l,l')= \sum_{l' \in E(G_i)} f(G_i,l,l') \, \,  -2 = \big| f(G_i,l) \big| -2.
	\]
	Hence, using again \ref{sommacontributi},
	
	\[
		\big|F(G_{i+1},l) \big|= f(G_{i+1},l,f) + \sum_{l' \in E(G_i)} f(G_{i+1},l,l')= \big| F(G_i,l) \big|.
	\]
	
	From this we deduce that the only case when Z changes is when $\mathbb{1}_{E_3(G_{i+1})}(l)=1$ and $\mathbb{1}_{E_3(G_{i})}(l)=0$. This happens exactly when the move $G_i \stackrel{m_2} \longrightarrow  G_{i+1}$ create a first 3-cycle containing $l$, hence when it is the first move of (S2).
\end{proof}

We study for last (S3), the most delicate one. Here a move $G_i \stackrel{m_2} \longrightarrow  G_{i+1}$ adds an edge $f$ not adjacent to $l$ which can either create a 4-cycle containing $l$ or not. In the first case various situations may happens, which lead us to different behaviors of Z. First of all, we notice that if the induced subgraph of $G_{i+1}$ (hence of $G$) with vertex set $V(l) \cup V(f)$ properly contains a 4-cycle, then we have $Z(G_{i+1},l) \geq Z(G_i,l)$, so for convenience we perform these types of moves at the beginning of (S3) and we call this sub process (S3a). After this, we consider another subprocess (S3b), in which we perform all the moves \textbf{m2} which creates induced 4-cycles with respect to its vertices: we call these induced 4-cycles of $G$ containing $l$ the \emph{pages of} $l$, and the set of all the pages of $l$ will be denoted by $\mathcal{B}(l)$. It's important to observe that the creation of a new page $P$ with edges $ \lbrace l,f,g,h \rbrace$ lends us to three different situations respect to the behavior of Z.
\begin{itemize}
	\item[i)]  $f(G_i,l,g)= f(G_i,l,h) = 2$. Then we have that $Z(G_{i+1},l) = Z(G_i,l) + f(G_{i+1},l,f)= Z(G_i,l) + 2$.
	
	Observe that the only possible way in which we end up to this case are when both $g$ and $h$ are in different 4-cycles of $G_i$. We illustrates two examples which will appear in the future.
	
	\begin{figure}[!ht]
		\begin{tikzpicture}[scale=0.6]
			
			\node at (-3.5,2) 	   	 	{$\bullet$};
			\node at (6-3.5,4)			{$\bullet$};
			\node at (6-3.5,0)			{$\bullet$};
			\node at (2.5-3.5,0) 		{$\bullet$};
			\node at (2.5-3.5,4) 		{$\bullet$};
			\node at (-2-3.5,2) 		{$G_i \, =$};
			
			\node at (0+7,2) 	   	 	{$\bullet$};
			\node at (6+7,4)			{$\bullet$};
			\node at (6+7,0)			{$\bullet$};
			\node at (2+7.5,0) 			{$\bullet$};
			\node at (2.5+7,4) 			{$\bullet$};
			\node at (8.5+7,2)			{$ = G_{i+1}$};
			\node at (17,2) 			{};
			\draw (7-3.5,2) edge[->] node[above] {$m_2$} (9.7-3.5,2);
			
			\path
			(2.5-3.5,0) edge node {\hspace{-4mm}$l$} (2.5-3.5,4)
			(2.5-3.5,0) edge node {} (0-3.5,2)
			(0-3.5,2) edge node {} (2.5-3.5,4)
			(2.5-3.5,4) edge node[above] {$g$} (6-3.5,4)
			(2.5-3.5,0) edge node[below] {$h$} (6-3.5,0)
			(0-3.5,2) edge node {} (6-3.5,0)
			(0-3.5,2) edge node {} (6-3.5,4);
			
			\path
			(2.5+7,0) edge node {\hspace{-4mm}$l$} (2.5+7,4)
			(2.5+7,0) edge node {} (0+7,2)
			(0+7,2) edge node {} (2.5+7,4)
			(2.5+7,4) edge node[above] {$g$} (6+7,4)
			(2.5+7,0) edge node[below] {$h$} (6+7,0)
			(0+7,2) edge node {} (6+7,0)
			(0+7,2) edge node {} (6+7,4)
			(6+7,0) edge node[left] {$f$} (6+7,4)
			;
		\end{tikzpicture}
		\caption{$g$ and $h$ are in different 4-cycles of $G_i$ that are not pages} \label{fig:casoia}
	\end{figure}
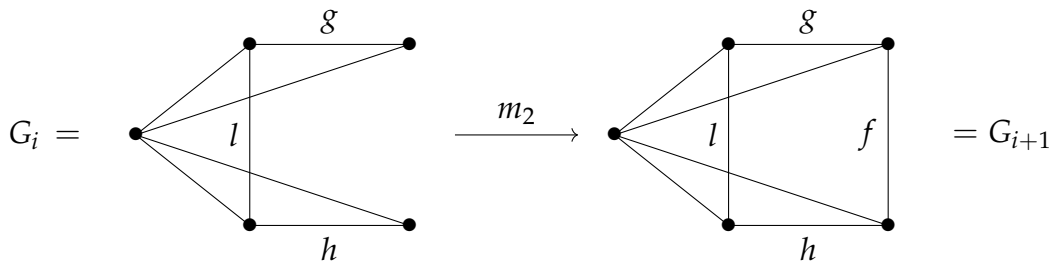
	
	\begin{figure}[!ht]
		\begin{tikzpicture}[scale=0.5]
			
			\node at (0,0) 		{$\bullet$};	
			\node at (0,4) 	    {$\bullet$};
			\node at (4,-1)  	{$\bullet$};
			\node at (4,3) 		{$\bullet$};
			\node at (-4,-1) 	{$\bullet$};
			\node at (-4,3)		{$\bullet$};
			\node at (-6,1.5)		{$G_i =$};
			
			\path
			(0,-0.1) edge node {\hspace{-4mm}$l$} (0,4)
			(0,0.1) edge node {} (4.1,-1)
			(0,4.1) edge node[below] {$g$} (4.1,3.1)
			(4,-1) edge node {} (4,3)
			(0,0.1) edge node[below] {$h$} (-4,-1)
			(-4,-1) edge node {} (-4,3)
			(-4,3.1) edge node {} (0,4.1);
			
			\draw (5,1.5) edge[->] node[above] {$m_2$} (8,1.5);
			
			\node at (0+13,0) 		{$\bullet$};	
			\node at (0+13,4) 	    {$\bullet$};
			\node at (4+13,-1)  	{$\bullet$};
			\node at (4+13,3) 		{$\bullet$};
			\node at (-4+13,-1) 	{$\bullet$};
			\node at (-4+13,3)		{$\bullet$};
			\node at (1+13,1)		{$f$};
			\node at (19.5,1.5)		{$ = G_{i+1}$};
			
			\path
			(0+13,-0.1) edge node {\hspace{-4mm}$l$} (0+13,4)
			(0+13,0.1) edge node {} (4.1+13,-1)
			(0+13,4.1) edge node[below] {$g$} (4.1+13,3.1)
			(4+13,-1) edge node {} (4+13,3)
			(0+13,0.1) edge node[below] {$h$} (-4+13,-1)
			(-4+13,-1) edge node {} (-4+13,3)
			(-4+13,3.1) edge node {} (0+13,4.1)
			(-4+13,-0.9)edge node {} (4+13,3.1);
			
		\end{tikzpicture}
		\caption{$g$ and $h$ are in different pages of $G_i$} \label{fig:casoib}
	\end{figure}
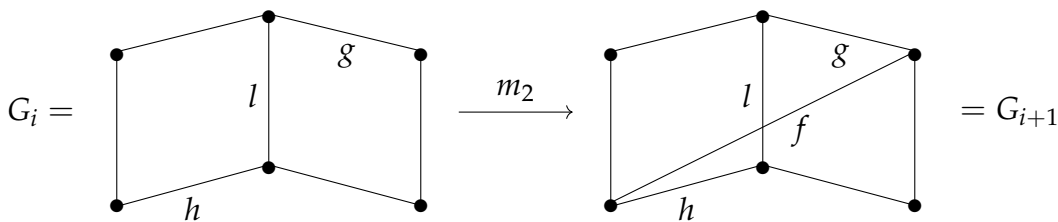
	\item[ii)] $f(G_i,l,g)=2$ and $f(G_i,l,h)= 4$ (or vice versa). Then we have that $f(G_{i+1},l,h)= 2$, so
	\[
		Z(G_{i+1},l) = Z(G_i,l) - 2 + f(G_{i+1},l,f) = Z(G_i,l).
	\] 
	Notice that this is the case when $h$ is not involved in any 3 or 4-cycle of $G_i$, but $g$ is in a 4-cycle of $G_i$.
	
		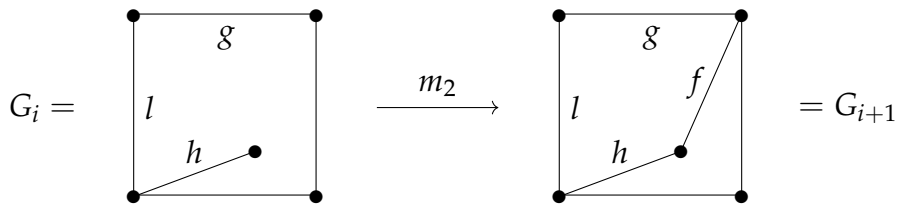
\begin{figure}[!ht]
		\begin{tikzpicture}[scale=0.4]
			
			\node at (0,0) (A) 		{$\bullet$};	
			\node at (0,6) (B)	    {$\bullet$};
			\node at (6,0) (C) 		{$\bullet$};
			\node at (6,6) (D)		{$\bullet$};
			\node at (4,1.5) (E)	{$\bullet$};
			\node at (-3,3) (E)	{$G_i =$};
			
			\path
			(0,-0.1) edge node[right] {$l$} (0,6.1)
			(0,0.1) edge node {} (6.1,0.1)
			(0,6.1) edge node[below] {$g$} (6.1,6.1)
			(6,0.1) edge node {} (6,6.1)
			(0,0.05) edge node[above] {$h$} (4,1.55);
			
			\draw (8,3) edge[->] node[above] {$m_2$} (12,3);
			
			\node at (0+14,0) (A) 		{$\bullet$};	
			\node at (0+14,6) (B)	    {$\bullet$};
			\node at (6+14,0) (C) 		{$\bullet$};
			\node at (6+14,6) (D)		{$\bullet$};
			\node at (4+14,1.5) (E)		{$\bullet$};
			\node at (9.5+14,3) (E)		{$= G_{i+1}$};
			
			\path
			(0+14,-0.1) edge node[right] {$l$} (0+14,6.1)
			(0+14,0.1) edge node {} (6.1+14,0.1)
			(0+14,6.1) edge node[below] {$g$} (6.1+14,6.1)
			(6+14,0.1) edge node {} (6+14,6.1)
			(0+14,0.05) edge node[above] {$h$} (4+14,1.55)
			(4+14,1.55) edge node {\hspace{-4mm}$f$} (6+14,6.1);
		\end{tikzpicture}
		\caption{An example of case ii)} \label{fig:casoii}
	\end{figure}
    
	\item[iii)] $f(G_i,l,g)= f(G_i,l,h) = 4$. Then we have that $f(G_{i+1},l,g) = f(G_{i+1},l,h)= 2$, hence 
	\[
		Z(G_{i+1},l) = Z(G_i,l) - 4 + f(G_{i+1},l,f) = Z(G_i,l) - 2.
	\]
	This is the case when $g$ and $h$ are not involved in any 3 or 4-cycle of $G_i$.
	
	\begin{figure}[!ht]
		\begin{tikzpicture}[scale=0.5]
			
			\node at (0,0) 	   	 	{$\bullet$};
			\node at (0,4)			{$\bullet$};
			\node at (4,0) 			{$\bullet$};
			\node at (4,4) 			{$\bullet$};
			\node at (-4,0) 	   	{$\bullet$};
			\node at (-4,4) 	   	{$\bullet$};
			\node at (-6,2) 		{$G_i \, =$};
			
			\node at (9,0) 	   	 		{$\bullet$};
			\node at (9,4) 	   	 		{$\bullet$};
			\node at (13,0) 			{$\bullet$};
			\node at (13,4) 			{$\bullet$};
			\node at (17,0) 			{$\bullet$};
			\node at (17,4) 			{$\bullet$};
			\node at (19.5,2) 		{$= \, G_{i+1} $};
			
			\draw (5,2) edge[->] node[above] {$m_2$} (8,2);
			
			\path
			(0,0) edge node[left] {$l$} (0,4)
			(4,0) edge node[above] {$h$} (0,0)
			(4,4) edge node[below] {$g$} (0,4)
			(0,0) edge node {} (-4,0)
			(-4,0) edge node {} (-4,4)
			(-4,4) edge node {} (0,4);
			
			\path 
			(13,4) edge node[left] {$l$} (13,0)
			(13,0) edge node {} (9,0)
			(9,0) edge node  {} (9,4)
			(9,4) edge node  {} (13,4)
			(13,0) edge node[above] {$h$} (17,0)
			(17,0) edge node[left] {$f$} (17,4)
			(17,4) edge node[below] {$g$} (13,4);
		\end{tikzpicture}
		\caption{An example of case iii)} \label{fig:casoiii}
	\end{figure}
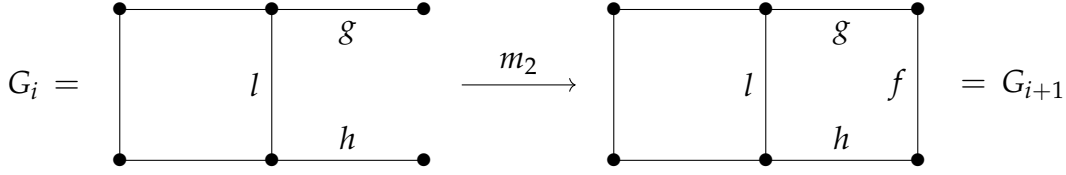
	
\end{itemize}

 Finally, after (S3b) we take the moves which do not create pages of $l$ for last, finishing (S3) with the subprocess (S3c). It's better to observe that in a move of (S3c) we are in the same situation as in (S5), so we have $Z(G_{i+1},l)= Z(G_i,l) + 4$.
Let's give a look at the behavior of Z during the process (S3) step by step. By what we said above on the subprocesses (S3a), given $G^{(3a)}$ the graph obtained at the end of (S3a), it holds that
\begin{equation} \label{disuguaglianzaS3a}
	Z(G^{(3a)},l) \geq 2 - 2 \cdot \mathbb{1}_{E_3}(l),
\end{equation}

We want to find a lower bound for $Z(G^{(3b)},l)$, where $G^{(3b)}$ is the graph obtained at the end of (S3b). In truth, we derive a lower bound which holds for every subgraph involved in this subprocess. In order to do this we define an equivalence relation $\approx$ on the set of all 4-cycles of $G$ containing $l$.

Given $P,Q $ such cycles, we say that $P \sim Q$ if they share exactly two edges (see Figure \ref{fig:esempiosim} for an example). Taking the transitive closure of $\sim$ we obtain the equivalence relation $\approx$. Precisely, $P \approx Q$ if there exists a sequence of 4-cycles $P=P_1, P_2, \ldots, P_n= Q$ in $G$ such that $P_i \sim P_{i+1}$ for all $i \in [n-1]$.

\begin{figure}[!ht]
	\begin{tikzpicture}[scale=0.5]
		
		\node at (0,0) (A) 		{$\bullet$};	
		\node at (0,6) (B)	    {$\bullet$};
		\node at (6,0) (C) 		{$\bullet$};
		\node at (6,6) (D)		{$\bullet$};
		\node at (4,1.5) (E)		{$\bullet$};
		
		\path
		(0,-0.1) edge node[right] {$l$} (0,6.1)
		(0,0.1) edge node[below] {$r$} (6.1,0.1)
		(0,6.1) edge node[below] {$g$} (6.1,6.1)
		(6,0.1) edge node[right] {$s$} (6,6.1)
		(0,0.05) edge node[above] {$h$} (4,1.55)
		(4,1.55) edge node {\hspace{-4mm}$f$} (6,6.1);
	\end{tikzpicture}
	\caption{An example of $P \sim Q$, where $P= \lbrace l, g, h, f \rbrace$ and $Q = \lbrace l,g, r, s \rbrace $} \label{fig:esempiosim}
\end{figure}

Moreover, we denote by $B(G,l)$ the number of the equivalence classes on the set of all 4-cycles of $G$ containing $l$ which do not contain a 4-cycle created in (S3a). 

\begin{lemma} \label{disuguaglianzaS3c}
	Let $G_i$ be a subgraph that is in the subprocess (S3b). Then it holds that 
	
	\begin{equation} \label{boundS3c}
		Z(G_i,l) \geq 2 - 2 \cdot \mathbb{1}_{E_3}(l) - 2B(G_i,l) .
	\end{equation}
\end{lemma}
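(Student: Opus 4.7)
The plan is to induct on the number of moves performed in (S3b), tracking the potential
\[
\Phi(G_i) := Z(G_i,l) + 2 B(G_i,l)
\]
and showing $\Delta\Phi \geq 0$ at each step, which rearranges to the desired inequality. The base case is $G_i = G^{(3a)}$: $G^{(2)}$ contains no $4$-cycle through $l$, since every non-$l$ edge of $G^{(2)}$ is incident to an endpoint of $l$ and hence there is no length-$3$ path between the two endpoints of $l$ avoiding $l$. Consequently every $4$-cycle through $l$ in $G^{(3a)}$ was created during (S3a), so every $\approx$-class is disqualified from the count of $B$, giving $B(G^{(3a)},l)=0$ and $\Phi(G^{(3a)}) = Z(G^{(3a)},l) \geq 2 - 2\cdot\mathbb{1}_{E_3}(l)$ by~\eqref{disuguaglianzaS3a}.

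The structural observation driving the inductive step is that any two distinct $4$-cycles through $l$ sharing some edge $e \neq l$ share exactly the pair $\{l,e\}$, and are therefore $\sim$-related: three shared edges would force the same vertex set and hence the same $4$-cycle. In particular, for every edge $e \neq l$ the $4$-cycles through $l$ containing $e$ lie in a single $\approx$-class $C_e$. Suppose the (S3b) move adds $f$ and creates the page $P = \{l,f,g,h\}$. A $4$-cycle $Q \neq P$ through $l$ can be $\sim$-related to $P$ only by sharing the pair $\{l,g\}$ or $\{l,h\}$: the pair $\{l,f\}$ is excluded because the (S3b) hypothesis forces $V(l)\cup V(f)$ to induce exactly the $4$-cycle $P$ in $G_{i+1}$. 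Hence the only $\approx$-classes that $P$ can merge are $C_g$ and $C_h$.

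The case analysis then reads: in case~(iii), neither $C_g$ nor $C_h$ exists in $G_i$, so $P$ starts a fresh $\approx$-class which is counted because $P$ is a (S3b)-page, giving $\Delta B = +1$ against $\Delta Z = -2$; in case~(ii), exactly one of $C_g, C_h$ exists and $P$ merely joins it without triggering a merger, yielding $\Delta B = 0 = \Delta Z$; in case~(i), both classes exist and either coincide ($\Delta B = 0$) or merge into one, losing at most one counted class, so $\Delta B \geq -1$ is compensated by $\Delta Z = +2$. In every case $\Delta \Phi \geq 0$, closing the induction. The main obstacle is the structural observation pinning the only possible mergers to $\{C_g, C_h\}$: this rests on the (S3b) hypothesis together with the completeness of (S2), which forbids $g$ or $h$ from sitting in a triangle with $l$ (such a triangle would supply a diagonal of $P$ and break the inducedness required by (S3b)); without this clean geometry around $P$, stray $\sim$-relations could merge additional classes and the potential argument would fail.
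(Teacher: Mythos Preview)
Your proof is correct and follows essentially the same inductive strategy as the paper: both track the simultaneous changes in $Z$ and $B$ across each (S3b) move and verify the inequality step by step. The paper organizes the case analysis by the value of $B(G_{i+1},l)-B(G_i,l)$, whereas you organize it by the cases (i)--(iii) and package the bookkeeping via the potential $\Phi=Z+2B$; your structural observation that $P$ can only be $\sim$-related through $C_g$ or $C_h$ makes explicit a point the paper leaves implicit, but the content is the same.
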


\begin{proof}
	We prove it by induction on the number of moves after the subprocess (S3a). The base step is when we are right at the end of (S3a), hence $B(G_i,l)=0$ and so the inequality above is exactly inequality \ref{disuguaglianzaS3a}.
	
	Now let $G_{i+1}$ be in (S3b) obtained by adding an edge $f$ in $G_i$, which is also in (S3b). We observe that $B(G_{i+1},l)$ can have three possible behaviors in function of $B(G_i,l)$.
	\begin{enumerate}
		\item $B(G_{i+1},l) = B(G_i,l) + 1$. This means that the page in $G_{i+1}$ created by $f$ forms a different class, hence the move $G_i \stackrel{m_2} \longrightarrow  G_{i+1}$ is of the type illustrated by Figure \ref{fig:casoiii}, so we have that $Z(G_{i+1},l)=Z(G_i,l)-2$.
		\item  $B(G_{i+1},l) = B(G_i,l)$. This happens exactly when $f$ creates a page $P$ such that $P \sim Q$, for a page $Q$ in $G_i$, so the move $G_i \stackrel{m_2} \longrightarrow  G_{i+1}$ is of the type of Figure \ref{fig:casoii}, hence we have that $Z(G_{i+1},l) \geq Z(G_i,l)$.
		\item $B(G_{i+1},l) = B(G_i,l) - 1$. This is the case when either the page $P$ created by $f$ make a collapse of two different equivalence classes of pages in $G_i$ in one in $G_{i+1}$, or there exists a 4-cycle $Q'$ in $G^{(3a)}$ such that $P \sim Q'$. Both of these scenarios are particular instances that fall in the cases of Figure \ref{fig:casoia} and Figure \ref{fig:casoib}, so we have that $Z(G_{i+1},l)=Z(G_i,l) + 2$.
	\end{enumerate}
	
	Applying the induction hypothesis on $G_i$ it's clear that the inequality \ref{boundS3c} is satisfied.
\end{proof}

Now, putting together what we said about the subprocesses of (S3), we obtain a lower bound on $Z(G^{(3)},l)$.

\begin{lemma} \label{disuguaglianzaG3}
	The following inequality holds
	\begin{equation} \label{boundG3}
		Z(G^{(3)},l) \geq 2 - 2 \cdot \mathbb{1}_{E_3} -2B + 4C,
	\end{equation}
	where $B=B(G,l)$ and $C$ is the number of moves in (S3c).
\end{lemma}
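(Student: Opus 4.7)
The plan is to chain together the partial results already established for each sub-process (S3a), (S3b), (S3c), using the starting value $Z(G^{(2)},l)$ computed in \Cref{decrease}.

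First I would record the endpoint of (S2): from \Cref{decrease} we have $Z(G^{(2)},l)=2$ if $l$ is in no $3$-cycle of $G^{(2)}$, and $Z(G^{(2)},l)=0$ otherwise. Since (S2) builds precisely all $3$-cycles of $G$ containing $l$, this is exactly $Z(G^{(2)},l)=2-2\cdot\mathbb{1}_{E_3}(l)$.

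Next I would pass through (S3a). By the paper's observation, any move in (S3a) adds an edge $f$ not adjacent to $l$ such that the induced subgraph of $G$ on $V(l)\cup V(f)$ properly contains a $4$-cycle; for such a move $Z$ is non-decreasing. Hence
\[
Z(G^{(3a)},l)\;\geq\;Z(G^{(2)},l)\;=\;2-2\cdot\mathbb{1}_{E_3}(l).
\]
This is the base case used by \Cref{disuguaglianzaS3c}, whose inductive hypothesis then yields, at the end of (S3b),
\[
Z(G^{(3b)},l)\;\geq\;2-2\cdot\mathbb{1}_{E_3}(l)-2B(G^{(3b)},l).
\]

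The key point is now to replace $B(G^{(3b)},l)$ by $B=B(G,l)$. By definition (S3c) consists exactly of those remaining moves of (S3) that do not create any new page of $l$, so the set of pages of $G^{(3b)}$ and of $G$ coincide; the equivalence relation $\approx$ defined on pages is therefore identical on both graphs, and the exclusion condition (not containing a $4$-cycle created in (S3a)) is the same in both. Hence $B(G^{(3b)},l)=B$.

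Finally, each of the $C$ moves in (S3c) adds an edge $f$ that creates no new $3$- or $4$-cycle through $l$, so $f(G_{i+1},l,f)=4$ while no other contribution $f(G_{i+1},l,l')$ changes; this means $Z$ increases by exactly $4$ at each such step. Adding $4C$ to the previous inequality gives
\[
Z(G^{(3)},l)\;=\;Z(G^{(3b)},l)+4C\;\geq\;2-2\cdot\mathbb{1}_{E_3}-2B+4C,
\]
which is \eqref{boundG3}. The only non-routine point is the invariance $B(G^{(3b)},l)=B$, which follows immediately from the defining property of (S3c).
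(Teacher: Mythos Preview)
Your proof is correct and follows exactly the approach the paper intends; the paper gives no explicit argument for this lemma, stating only that it follows by ``putting together what we said about the subprocesses of (S3)'', and your write-up is precisely that synthesis. One small correction of terminology: the relation $\approx$ is defined on \emph{all} $4$-cycles of $G$ containing $l$, not only on pages, and $B(G,l)$ counts equivalence classes of all such $4$-cycles (excluding those containing an (S3a) cycle); your conclusion $B(G^{(3b)},l)=B$ remains valid because, as you yourself note in the next paragraph, a move in (S3c) creates no $3$- or $4$-cycle through $l$ at all, so the entire set of $4$-cycles through $l$---hence $\approx$ and $B$---is unchanged between $G^{(3b)}$ and $G$.
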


Now it suffices to integrate the relations of (S4) and (S5) to derive the bound on $Z(G,l)$.

\begin{proposition} \label{disuguaglianzaG}
	For any $G=(V,E)$ connected graph and for any $l \in E$ it holds
	\begin{equation} \label{boundG}
		Z(G,l) \geq 2 - 2 \cdot \mathbb{1}_{E_3} -2B + 4N,
	\end{equation}
	where $B$ is as in Lemma \ref{disuguaglianzaG3} and $N$ is the number of moves in (S3c) and in (S5).
\end{proposition}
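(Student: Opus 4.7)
The plan is to trace $Z(G_i,l)$ through the five-stage process $G_0 \to G^{(1)} \to G^{(2)} \to G^{(3)} \to G^{(4)} \to G$ and accumulate the bounds already established for each subprocess. The proof is essentially bookkeeping: every ingredient is already in place, and we just need to combine them in order.

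First I would record the base value $Z(G_0,l)=Z(\{l\},l)=2$, which comes directly from the definition \eqref{zeta} (indeed $|F(\{l\},l)|=2$, there are two vertices and so $2v_G-5=-1$, and $l\notin E_3(\{l\})$, giving $2 - 2\cdot(-1) - 0 = 2$; alternatively one can just note this was already argued at the start of the analysis). Then I would invoke the observation that moves of type \textbf{m1} preserve $Z(\cdot,l)$, so both (S1) and (S4) contribute nothing, giving $Z(G^{(1)},l) = Z(G_0,l) = 2$ and $Z(G^{(4)},l)=Z(G^{(3)},l)$.

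Next I would apply Lemma \ref{decrease} to the stage (S2). By part (a) of that lemma, the very first move of (S2) decreases $Z$ by $2$, and by part (b) every subsequent move of (S2) leaves $Z$ unchanged. Since (S2) is nontrivial precisely when $l$ lies in a $3$-cycle of $G$, i.e.\ when $\mathbb{1}_{E_3(G)}(l)=1$, we get in both cases
\[
Z(G^{(2)},l) = 2 - 2\cdot \mathbb{1}_{E_3(G)}(l).
\]
Then I would plug this into the analysis of (S3): Lemma \ref{disuguaglianzaG3} already states that at the end of (S3a) and (S3b) we have
\[
Z(G^{(3b)},l) \geq 2 - 2\cdot\mathbb{1}_{E_3(G)}(l) - 2B,
\]
and that each of the $C$ moves in (S3c) adds exactly $4$ to $Z$, because each such move adds an edge that fails to create any new $3$- or $4$-cycle containing $l$. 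This yields
\[
Z(G^{(3)},l) \geq 2 - 2\cdot\mathbb{1}_{E_3(G)}(l) - 2B + 4C.
\]

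Finally I would close by handling (S4) and (S5). Since (S4) only performs moves of type \textbf{m1}, we have $Z(G^{(4)},l)=Z(G^{(3)},l)$. And for (S5), every added edge has both endpoints in $V(G^{(4)})=V(G)$ but by construction cannot create a $3$- or $4$-cycle through $l$ (all such cycles were already built in (S2) and (S3)); so exactly as in the (S3c) case each of the $|E|-|E(G^{(4)})|$ moves contributes $+4$ to $Z$. Letting $N:=C + (|E|-|E(G^{(4)})|)$ denote the total number of moves in (S3c) and (S5), we conclude
\[
Z(G,l) \;\geq\; 2 - 2\cdot\mathbb{1}_{E_3(G)}(l) - 2B + 4N,
\]
which is exactly inequality \eqref{boundG}. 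Since every step has been individually justified earlier in the section, there is no real obstacle here — the only thing to double-check is that the partitioning of moves into (S1)--(S5) (and the sub-partitioning of (S3) into (S3a), (S3b), (S3c)) is exhaustive and that every edge of $G\setminus\{l\}$ is added exactly once, so that the contributions to $Z$ simply add.
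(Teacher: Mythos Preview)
Your proof is correct and follows exactly the paper's approach: the paper gives no explicit proof of this proposition, merely remarking that ``it suffices to integrate the relations of (S4) and (S5)'' with Lemma~\ref{disuguaglianzaG3}, which is precisely the bookkeeping you carry out. One tiny slip in your parenthetical: $|F(\{l\},l)|=0$, not $2$ (there is no second edge to pair with $l$), so the computation is $0 - 2(-1) - 0 = 2$; your conclusion $Z(G_0,l)=2$ is nonetheless correct, and as you note the paper states this value directly.
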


We are now ready to prove Proposition \ref{inequalityedge}.

\begin{proof}[Proof of Proposition \ref{inequalityedge}:]
	Among all the edges of $G$, let us choose $l= \lbrace v, w \rbrace \in E$ which maximizes $B=B(G,l)$. If $B=0$ the result is immediate by Proposition \ref{disuguaglianzaG}.
	Suppose now that $B>0$; this implies that there exists a page $P$ that is constructed in (S3b) respect to $l$ and it is not equivalent to any 4-cycle constructed in (S3a), and take the edge $l'$ of $P$ not adjacent to $l$. Proposition \ref{disuguaglianzaG} gives the inequality \ref{boundG} for $l$ and the following inequality for $l'$
	\begin{equation} \label{boundGprimo}
		Z(G,l') \geq 2 - 2 \cdot \mathbb{1}_{E_3} -2B' + 4N'.
	\end{equation}
	We notice that any page $Q$ with edges $\lbrace l,g,h,f \rbrace$ (say $f$ is not adjacent to $l$) that is not in the same class of $P$ respect to $\approx$ in $\mathcal{B}(l)$ is such that $f$ is not in a 4-cycle of $G$ containing $l'$, or $f$ is in a 4-cycle of $G$ containing $l'$ but $g$ and $h$ are not. In fact, if $f$ is in a 4-cycle of $G$ that contains $l'$, then $G$ should contain one of the two possible subgraphs illustrated in Figure \ref{fig:sottografi_dim_prop3.4}.
	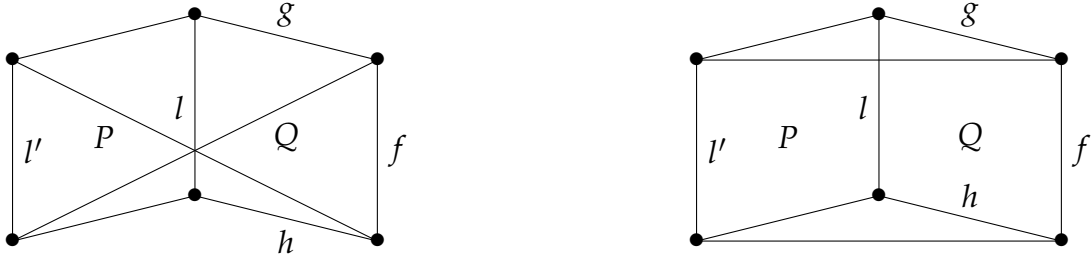
\begin{figure}[!ht]
		\begin{tikzpicture}[scale=0.6]
			
			\node at (0,0) 		{$\bullet$};	
			\node at (0,4) 	    {$\bullet$};
			\node at (4,-1)  	{$\bullet$};
			\node at (4,3) 		{$\bullet$};
			\node at (-4,-1) 	{$\bullet$};
			\node at (-4,3)		{$\bullet$};
			\node at (-2,1.3)	{$P$};
			\node at (2, 1.3) 	{$Q$};
			
			\path
			(0,-0.1) edge node {\hspace{-4mm}$l$} (0,4)
			(0,0) edge node[below] {$h$} (4,-1)
			(0,4) edge node[above] {$g$} (4,3)
			(4,-1) edge node[right] {$f$} (4,3)
			(0,0) edge node {} (-4,-1)
			(-4,-1) edge node[right] {$l'$} (-4,3)
			(-4,3) edge node {} (0,4)
			(-4,-1) edge node {} (4,3)
			(-4,3) edge node {} (4,-1);
			
			\node at (0+15,0) 		{$\bullet$};	
			\node at (0+15,4) 	    {$\bullet$};
			\node at (4+15,-1)  	{$\bullet$};
			\node at (4+15,3) 		{$\bullet$};
			\node at (-4+15,-1) 	{$\bullet$};
			\node at (-4+15,3)		{$\bullet$};
			\node at (-2+15,1.3)	{$P$};
			\node at (2+15, 1.3) 	{$Q$};
			
			\path
			(0+15,-0.1) edge node {\hspace{-4mm}$l$} (0+15,4)
			(0+15,0) edge node[above] {$h$} (4+15,-1)
			(0+15,4) edge node[above] {$g$} (4+15,3)
			(4+15,-1) edge node[right] {$f$} (4+15,3)
			(0+15,0) edge node {} (-4+15,-1)
			(-4+15,-1) edge node[right] {$l'$} (-4+15,3)
			(-4+15,3) edge node {} (0+15,4)
			(-4+15,3) edge node {} (4+15,3)
			(-4+15,-1) edge node {} (4+15,-1);
			
		\end{tikzpicture}
		\caption{The two possible subgraphs contained in $G$} \label{fig:sottografi_dim_prop3.4}
	\end{figure}

The first case would imply $P \approx Q$ so it cannot happen. In the other case, note that $g$ and $h$ cannot be in 4-cycles containing $l'$, because otherwise we get an edge that witnesses $P \approx Q$ or that $P$ is not a page, in contrast with our assumptions.		

As a consequence we have that $N' \geq B-1$ and, vice versa, $N \geq B'-1$. Moreover, if $l \in E_3$, then any 3-cycle of $G$ containing $l$ has an edge $g$ which is not in any 4-cycle containing $l'$: in fact, if this doesn't hold, then locally $G$ contains the subgraph given in Figure \ref{fig:contributoE3}, and this would imply that $P$ is equivalent to a 4-cycle created in (S3a), in contrast with our assumptions.

\begin{figure}[!ht]
	\begin{tikzpicture}[scale=0.7]
		
		\node at (0+7,2) 	   	 	{$\bullet$};
		\node at (6.5+7,4)			{$\bullet$};
		\node at (6.5+7,0)			{$\bullet$};
		\node at (2+7.5,0) 			{$\bullet$};
		\node at (2.5+7,4) 			{$\bullet$};
		\node at (4.5+7,2)			{$P$};
		
		\path
		(2.5+7,0) edge node {\hspace{-4mm}$l$} (2.5+7,4)
		(2.5+7,0) edge node {} (0+7,2)
		(0+7,2) edge node {} (2.5+7,4)
		(2.5+7,4) edge node {} (6.5+7,4)
		(2.5+7,0) edge node {} (6.5+7,0)
		(0+7,2) edge node {} (6.5+7,0)
		(0+7,2) edge node {} (6.5+7,4)
		(6.5+7,0) edge node[left] {$l'$} (6.5+7,4)
		;
	\end{tikzpicture}
	\caption{The local structure of $G$ if $g$ and $h$ are in 3 or 4-cycles of $G$ containing $l'$.} \label{fig:contributoE3}
\end{figure}
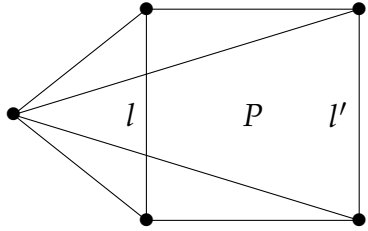
This says that if $l \in E_3$, then $N' \geq B$ (and the analogue if $l' \in E_3$). This leads us to bound $Z(G,l)+ Z(G,l')$ in all the possible cases:
\begin{enumerate}
	\item $l, l' \in E_3$: 
	 \[
	 	Z(G,l)+ Z(G,l') \geq 4(B + B') -2(B+ B')=2(B+B') \geq 2.
	 \]
	\item $l \in E_3$, $l' \not \in E_3$ (and vice versa):
	\[
		Z(G,l) + Z(G,l') \geq 4(B+ B' - 1) -2(B + B' -1)= 2(B + B' -1) \geq 0.
	\]
	\item $l,l' \not \in E_3$: from $l' \not \in E_3$ we have that $B'\geq 1$, so
	\[
		Z(G,l) + Z(G,l') \geq 4(B+B'-2) -2(B+B'-2)= 2(B+B'-2) \geq 0.
	\]
\end{enumerate}
We have to show now that for any edge $f$ different to $l$ it holds that $Z(G,f) \geq 0$. We  notice that the argument above holds for any edge $f$ in any page contained in one of the $B$ special classes, giving in particular $Z(G,f) \geq 0$. For the other edges $f$, we say that for any of the $B$ special equivalence classes of pages of $l$ there is an edge which is not in any 4-cycle containing $f$. In order to show this, let's pick a page $P$ with edges $\lbrace l,g,h,s \rbrace$ ($s$ not adjacent to $l$) in a class mentioned above and let's treat the various cases which depends on the type of the edge $f$.

Let $f$ be in a 3-cycle with $l$. If $g$ or $h$ are in a 3 or 4-cycle with $f$, then $G$ contains one of the two subgraphs illustrated in Figure \ref{fig:sottografi2_dim_prop3.4} (up to isomorphism).

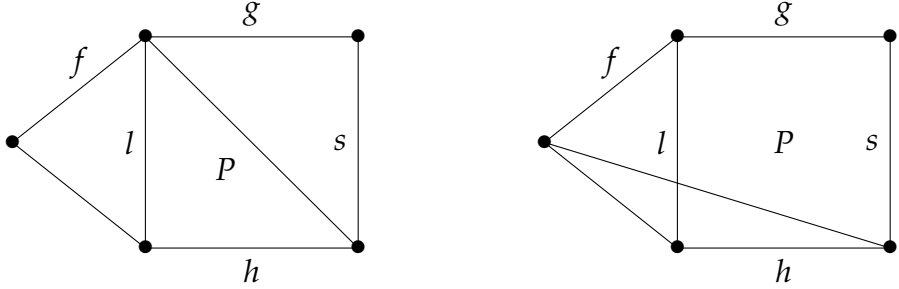
\begin{figure}[!ht]
	\begin{tikzpicture}[scale=0.7]
		
		\node at (0,2) 	   	 	{$\bullet$};
		\node at (6.5,4)			{$\bullet$};
		\node at (6.5,0)			{$\bullet$};
		\node at (2.5,0) 			{$\bullet$};
		\node at (2.5,4) 			{$\bullet$};
		\node at (4,1.5)			{$P$};
		
		\path
		(2.5,0) edge node {\hspace{-4mm}$l$} (2.5,4)
		(2.5,0) edge node {} (0,2)
		(0,2) edge node[above] {$f$} (2.5,4)
		(2.5,4) edge node[above] {$g$} (6.5,4)
		(2.5,4) edge node {} (6.5,0)
		(2.5,0) edge node[below] {$h$} (6.5,0)
		(6.5,0) edge node[left] {$s$} (6.5,4);
		
		\node at (0+10,2) 	   	 	{$\bullet$};
		\node at (6.5+10,4)			{$\bullet$};
		\node at (6.5+10,0)			{$\bullet$};
		\node at (2.5+10,0) 			{$\bullet$};
		\node at (2.5+10,4) 			{$\bullet$};
		\node at (4.5+10,2)			{$P$};
		
		\path
		(2.5+10,0) edge node {\hspace{-4mm}$l$} (2.5+10,4)
		(2.5+10,0) edge node {} (0+10,2)
		(0+10,2) edge node[above] {$f$} (2.5+10,4)
		(2.5+10,4) edge node[above] {$g$} (6.5+10,4)
		(2.5+10,0) edge node[below] {$h$} (6.5+10,0)
		(0+10,2) edge node {} (6.5+10,0)
		(6.5+10,0) edge node[left] {$s$} (6.5+10,4)
		;
	\end{tikzpicture}
	\caption{The two possible subgraphs contained in $G$} \label{fig:sottografi2_dim_prop3.4}
\end{figure}

The first says that $P$ is not a page and the second gives $P \sim Q$, where $Q$ is the 4-cycle containing $f$ and $h$, so it's constructed in (S3a), against the assumptions given on $P$.

With the same argument it can be proven that for any 4-cycle $Q$ that contains $f$ and $l$, if $g$ and $h$ are in 3 or 4-cycles that contain $f$ then $P \approx Q$, and this settle down all the other cases when $f$ is in a 4-cycle of $G$.

If $f \in E(G^{(3)})$ is added in (S3c), then there exist two edges $r$ and $t$ incident to $f$ which insist to the same vertex of $l$, say $v$. But now, if there is a 4 cycle of $G$ containing $f$ and $g$, then there exists an edge $e$ incident to $f$ which contains the vertex $w$, and so there would be a 4-cycle with edges $f,e,l$ and one between $r$ and $t$, in contrast with our assumption on $f$.

When $f$ is not an edge of $G^{(3)}$ we have to distinguish two different situations, whether $f$ is incident to $s$ or not. When $f$ is incident to $s$ (hence without loss of generality $f$ is also incident to $h$) if there is a 4-cycle in $G$ containing both $f$ and $g$, then $G$ contains one of the two subgraphs given by Figure \ref{fig:sottografi3_dim_prop3.4}.

\begin{figure}[!ht]
	\begin{tikzpicture}[scale=0.7]
		
		\node at (6.5,4)			{$\bullet$};
		\node at (6.5,0)			{$\bullet$};
		\node at (2.5,0) 			{$\bullet$};
		\node at (2.5,4) 			{$\bullet$};
		\node at (4,1.5)			{$P$};
		\node at (9,2)				{$\bullet$}; 
		
		\path
		(2.5,0) edge node {\hspace{-4mm}$l$} (2.5,4)
		(2.5,4) edge node[above] {$g$} (6.5,4)
		(2.5,4) edge node {} (6.5,0)
		(2.5,0) edge node[below] {$h$} (6.5,0)
		(6.5,0) edge node[left] {$s$} (6.5,4)
		(6.5,0) edge node[below] {$f$} (9,2)
		(9,2) edge node {} (6.5,4);
		
		\node at (9+10,2) 	   	 	{$\bullet$};
		\node at (6.5+10,4)			{$\bullet$};
		\node at (6.5+10,0)			{$\bullet$};
		\node at (2.5+10,0) 		{$\bullet$};
		\node at (2.5+10,4) 		{$\bullet$};
		\node at (4.5+10,2)			{$P$};
		
		\path
		(2.5+10,0) edge node {\hspace{-4mm}$l$} (2.5+10,4)
		(6.5+10,0) edge node[below] {$f$} (9+10,2)
		(9+10,2) edge node {} (2.5+10,4)
		(2.5+10,4) edge node[above] {$g$} (6.5+10,4)
		(2.5+10,0) edge node[below] {$h$} (6.5+10,0)
		(6.5+10,0) edge node[left] {$s$} (6.5+10,4)
		;
	\end{tikzpicture}
	\caption{The two possible situations if both $f$ and $g$ are in a common 4-cycle of $G$} \label{fig:sottografi3_dim_prop3.4}
\end{figure}
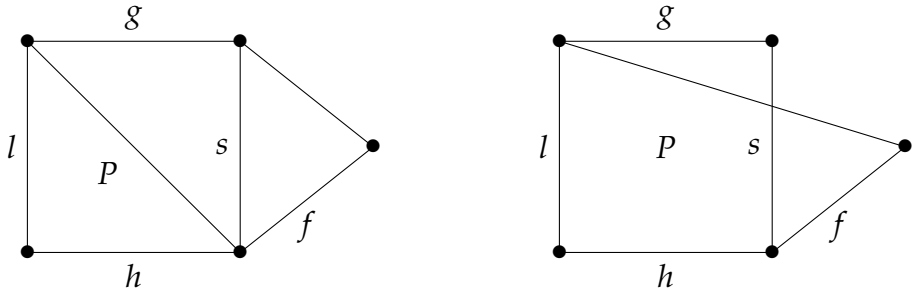

The first says that $P$ is not a page, while the second would imply that $f \in E(G^{(3)})$ against our assumptions.

When $f$ is not incident to $s$, if we suppose that $g,h$ and $s$ are in some 4-cycles of $G$ containing $f$, then we have that there is a 4-cycle containing $f$ and $l$ (which contradicts that $f \not \in E(G^{(3)})$) or $G$ contains the subgraph given by Figure \ref{fig:sottografi4_dim_prop3.4}.

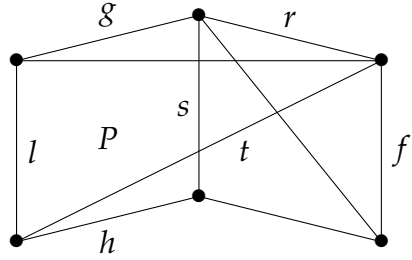
\begin{figure}[!ht]
	\begin{tikzpicture}[scale=0.6]
		
		\node at (0,0) 		{$\bullet$};	
		\node at (0,4) 	    {$\bullet$};
		\node at (4,-1)  	{$\bullet$};
		\node at (4,3) 		{$\bullet$};
		\node at (-4,-1) 	{$\bullet$};
		\node at (-4,3)		{$\bullet$};
		\node at (-2,1.3)	{$P$};
		\node at (1,1)		{$t$};
		
		\path
		(0,-0.1) edge node {\hspace{-4mm}$s$} (0,4)
		(0,0) edge node {} (4,-1)
		(4,-1) edge node {} (0,4)
		(0,4) edge node[above] {$r$} (4,3)
		(4,-1) edge node[right] {$f$} (4,3)
		(0,0) edge node[below] {$h$} (-4,-1)
		(-4,-1) edge node[right] {$l$} (-4,3)
		(-4,3) edge node[above] {$g$} (0,4)
		(-4,3) edge node {} (4,3)
		(-4,-1) edge node {} (4,3);
		
	\end{tikzpicture}
	\caption{The subgraph contained in $G$ in a case when $f$ is not incident to $s$} \label{fig:sottografi4_dim_prop3.4}
\end{figure}
This would imply that $P \approx Q$ where $Q$ is the induced subgraph with vertices given by the union of the vertices of $l,g$ and $r$, again in contrast with the assumptions on $P$.

Hence in all the cases cited above we can construct a sequence of subgraphs starting from $ \lbrace f \rbrace$ ending in $G$ such that there are at least $B$ edges added in the processes (S3c) or (S5), giving us
\[
	Z(G,f) \geq -2B(G,f) + 4N(f) \geq -2B + 4B \geq 2.
\]

\end{proof}

\section{Characterization of graphs with $z_2(G)=0$ }\label{sec:z_2(G)=0}

The proof of Proposition \ref{inequalityedge} also helps us to characterize all the graphs $G$ such that $z_2(G)=0$. Let $G$ be such a graph. Given $l \in E$ which maximizes $B= B(G,l)$, by the lower bounds on $Z(G,l) + Z(G,l')$ we derive that $B \leq 1$, and if we consider also the lower bound given at the end of the proof, for any edge $f$ of $G$ with $B(G,f)=1$ we have that $f \not \in E_3$. Now thanks to Proposition \ref{disuguaglianzaG} we deduce that $Z(G,f)=0$ for any $f \in E$, and in order to obtain this it is necessary that $G=G^{(3b)}(f)$ for any $f \in E$, so for any pair of $f,f'$ of different edges in $G$ we have that there exists a 3 or 4-cycle which contains $f$ and $f'$. The following two lemmas treats separately the cases $B=1$ and $B=0$.
\begin{lemma}
	Let $G$ be a connected graph with $n$ vertices such that $z_2(G)=0$ and there exists $l \in E$ with $B(G,l)=1$. Then $G \cong K_{2,n-2}$.
\end{lemma}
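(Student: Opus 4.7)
The plan is to leverage the two facts already extracted in the paragraph preceding the lemma: since $B(G,l)=1$ we have $l \notin E_3(G)$, and since $z_2(G)=0$ every pair of distinct edges of $G$ lies in a common $3$- or $4$-cycle. Writing $l = vw$ and setting $A := N(w) \setminus \{v\}$ and $B := N(v) \setminus \{w\}$, the absence of a common neighbor of $v$ and $w$ yields $A \cap B = \emptyset$. The aim is to identify $G$ with the complete bipartite graph on parts $\{v\} \cup A$ and $\{w\} \cup B$, and then to compute $z_2$ on a complete bipartite graph in order to force one of the two sides to have size exactly $2$.

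First I would show $V(G) = \{v,w\} \cup A \cup B$: any other vertex $u$ is incident to some edge by connectivity, and the $3$- or $4$-cycle that this edge shares with $l$ is forced to be a $4$-cycle (no triangle contains $l$) that places $u$ in $N(v) \cup N(w)$. Next I would check that both $A$ and $B$ are independent: an edge inside $A$, together with $l$, would demand a common $4$-cycle $v - w - x - y - v$ whose vertex $x$ or $y$ must lie in $N(v) = B \cup \{w\}$, contradicting $A \cap B = \emptyset$. For the key structural step I would then show that every vertex of $A$ is adjacent to every vertex of $B$: each $a \in A$ has some neighbor in $B$ via the $4$-cycle through $wa$ and $l$, and if some pair $(a_1,b_1) \in A \times B$ were non-adjacent, choosing $b_2 \in N(a_1) \cap B$ and $a_2 \in N(b_1) \cap A$ would produce two disjoint edges $a_1 b_2$ and $a_2 b_1$ whose only possible common $4$-cycle $a_1 - b_2 - a_2 - b_1 - a_1$ would still require $a_1 b_1 \in E$, a contradiction.

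These steps identify $G$ with $K_{p,q}$, where $p := |A|+1 \geq 2$ and $q := |B|+1 \geq 2$ (both bounds coming from the existence of at least one page of $l$). To conclude, I would plug this into the definition of $z_2$: since $K_{p,q}$ is triangle-free and any two of its distinct edges lie in a common $4$-cycle, every unordered pair of edges contributes $2$, so $f_1(P_{K_{p,q}}) = pq(pq-1)$ and hence $z_2(K_{p,q}) = pq(pq-1) - pq(2(p+q)-5) = pq(p-2)(q-2)$. The hypothesis $z_2(G)=0$ then forces $p=2$ or $q=2$, giving $G \cong K_{2,n-2}$. I expect the main obstacle to be the complete-bipartiteness step: the hypothesis $B(G,l)=1$ only tells us that the bipartite adjacency between $A$ and $B$ is \emph{connected}, and one must combine this with the pairwise-cycle hypothesis applied to a carefully chosen pair of disjoint edges in order to upgrade connectivity to completeness.
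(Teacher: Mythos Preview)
Your argument is correct and takes a genuinely different, more elementary route than the paper.

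The paper works entirely inside the $Z$-function machinery: it builds a special sequence of subgraphs, uses the monotonicity of $Z(G_i,l)$ along that sequence together with $z_2(G)=0$ to force $Z(G_i,l)=0$ throughout, and from this deduces that any two pages $Q,Q'$ of $l$ satisfy $Q\sim Q'$ (not merely $Q\approx Q'$). That in turn produces a single edge $s$ incident to $l$ lying in every page, and $K_{2,n-2}$ drops out.

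Your proof sidesteps this apparatus completely. Using only the two derived facts (no triangle through $l$; every pair of edges shares a $3$- or $4$-cycle) you show directly that $G$ is bipartite with parts $\{v\}\cup A$ and $\{w\}\cup B$ and that the bipartition is complete, and you then finish with the clean closed formula $z_2(K_{p,q})=pq(p-2)(q-2)$. This is shorter and yields the explicit value of $z_2$ on complete bipartite graphs as a bonus. Note also that your concern about needing $B(G,l)=1$ for the completeness step is unnecessary: your disjoint-edge argument with $a_1b_2$ and $a_2b_1$ already forces every $A$--$B$ adjacency (once you have shown $G$ is bipartite, the only $4$-cycle on $\{a_1,a_2,b_1,b_2\}$ through both edges must use $a_1b_1$), so $B(G,l)=1$ is used only to obtain $l\notin E_3$ via the setup paragraph.

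One small point worth making explicit in a write-up: when the two edges $a_1b_2$ and $a_2b_1$ are considered, the fact that they cannot lie in a common $3$-cycle follows because at this stage you have already established that every edge of $G$ joins $\{v\}\cup A$ to $\{w\}\cup B$, so $G$ is bipartite.
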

\begin{proof}
	By what we said above, since there are no 3-cycles that contain $l$, for any edge $f$ of $G$ there is a page of $l$ containing $f$, and by $B(G,l)=1$ we have that all the pages of $l$ are equivalent. Notice that for any $G$ and $l$ such that $B(G,l)=1$ there exists a sequence of subgraphs starting from $\lbrace l \rbrace$ ending in $G$ in which there is a $j>0$ such that $Z(G_i,l)= 2$ for any $i <j$, $Z(G_j,l)=0$, and $Z(G_i,l) \leq Z(G_{i+1},l)$ for any $i \geq j$. We can construct such a sequence in this way: once done the process (S1), we start (S3) with a move $G_{j-1} \stackrel{m_2} \longrightarrow  G_j$ which constructs a page $P$, and after this we perform all the moves \textbf{m2} which create pages $Q$ such that $Q \sim P$. Then inductively, we perform a move $G_i \stackrel{m_2} \longrightarrow  G_{i+1}$ if the page $Q'$ created in $G_{i+1}$ is such that $Q' \sim Q$ for a certain $Q$ page in $G_i$. In particular, by the hypothesis $z_2(G)=0$, we have also that $Z(G_i,l)=0$ for any $i \geq j$.
	
	We show that for any pages $Q,Q'$ of $l$ we have $Q \sim Q'$. In fact, if there are $Q, Q'$ such that $Q \not \sim Q'$, by $Q \approx Q'$ we have that there exist $Q_1, \ldots, Q_k$ ($k \geq 1)$ pages such that $Q=Q_0 \sim Q_1 \sim \cdots \sim Q_k \sim Q_{k+1}=Q'$. Without loss of generality we suppose that $Q \sim P \sim Q'$ for a certain page $P$, hence we consider a sequence of subgraphs as above in which the first three pages constructed are, in order, $P$, $Q$ and $Q'$. Thus $G$ has a subgraph of the form given by Figure \ref{QsimPsimQ'}.
	
	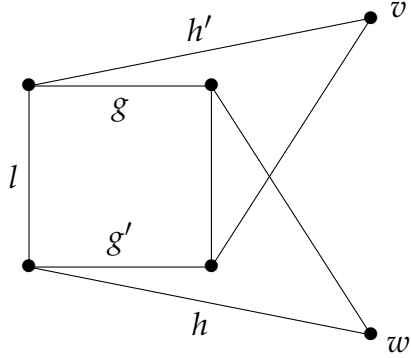
\begin{figure}[!ht]
		\begin{tikzpicture}[scale=0.6]
			
			\node at (10,-1.5) 	   	 	{$\bullet$};
			\node at (10,5.5)			{$\bullet$};
			\node at (10.6,5.7)			{$v$};
			\node at (10.6,-1.7)			{$w$};
			\node at (6.5,4)			{$\bullet$};
			\node at (6.5,0)			{$\bullet$};
			\node at (2.5,0) 		{$\bullet$};
			\node at (2.5,4) 		{$\bullet$};
			
			\path
			(2.5,0) edge node {\hspace{-4mm}$l$} (2.5,4)
			(2.5,0) edge node[below] {$h$} (10,-1.5)
			(2.5,4) edge node[above] {$h'$} (10, 5.5)
			(6.5,0) edge node {} (10,5.5)
			(10,-1.5) edge node {} (6.5,4)
			(2.5,4) edge node[below] {$g$} (6.5,4)
			(2.5,0) edge node[above] {$g'$} (6.5,0)
			(6.5,0) edge node[left] {} (6.5,4)
			;
		\end{tikzpicture}
		\caption{$Q \sim P \sim Q'$ with $Q \not \sim Q'$, where $\lbrace l, g ,g' \rbrace \subseteq P$, $\lbrace l, g ,h \rbrace \subseteq Q$, $\lbrace l, g' ,h' \rbrace \subseteq Q'$.} \label{QsimPsimQ'}
	\end{figure} 
	By the hypothesis $h$ and $h'$ have to stay in a 4-cycle of $G$, hence $ \lbrace v, w \rbrace \in E$. But given the move $G_i \stackrel{m_2} \longrightarrow  G_{i+1}$ that adds $\lbrace v, w \rbrace$, it can be easily seen that $Z(G_{i+1},l) \geq Z(G_i,l) + 2$, hence $Z(G,l)>0$, in contrast with our hypothesis.
	
	By the fact that $Q \sim Q'$ for any $Q,Q'$ pages of $l$ it follows that there is an edge $s$ incident to $l$ such that any page $P$ of $l$ contains also $s$. Since any edge of $G$ is in a page of $l$ we have that, given $v, w $ the two non-common vertices of $l$ and $s$, $\lbrace x,v \rbrace$, $\lbrace x,w \rbrace \in E$ for any $x \in V \smallsetminus \lbrace v, w \rbrace$, hence $K_{2,n-2} \subseteq G$. Moreover notice that any edge of the form $ f=\lbrace x, y \rbrace$ with $x,y \in V \smallsetminus \lbrace v, w \rbrace$ can't be in $G$ because $l,s,f$ can't be in a 4-cycle, hence we conclude.
\end{proof}

\begin{lemma}
	Let $G$ be a connected graph with $n$ vertices ($ n \geq 3$) such that $z_2(G)=0$ and $B(G,l)=0$ for any $l \in E$. Then $G \cong K_{1,1,n-2}$ or $G \cong K_n$.
\end{lemma}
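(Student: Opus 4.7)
The plan is to combine the structural consequences of the hypotheses with an explicit local formula for $Z(G,l)$. First I would apply \Cref{disuguaglianzaG} with the hypotheses $B(G,l)=0$ and $Z(G,l)=0$: the inequality $Z(G,l)\geq 2-2\mathbb{1}_{E_3(G)}(l)-2B+4N$ collapses to $0\geq 2-2\mathbb{1}_{E_3(G)}(l)+4N$, so for every edge $l$ we must have $l\in E_3(G)$ and $N=0$. Hence every edge of $G$ lies in a triangle, no edges are added in steps (S3c) or (S5), and in particular $N[u]\cup N[v]=V(G)$ for every edge $\{u,v\}$.

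Next, for non-adjacent vertices $p,q\in V$, pick any neighbour $a$ of $p$ (which exists because $G$ is connected); applying $V(G)=N[p]\cup N[a]$ to $q$ and using $q\notin N[p]$ gives $q\sim a$. Thus every neighbour of $p$ is a neighbour of $q$, and by symmetry $N(p)=N(q)$. The relation ``non-adjacent with equal open neighbourhood'' is then an equivalence relation whose classes are independent sets of vertices sharing a common open neighbourhood, while all pairs from distinct classes are adjacent. Hence $G=K_{n_1,\dots,n_k}$ is complete multipartite, and the triangle condition forces $k\geq 3$.

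The main step is an explicit formula for $Z(G,l)$. Fix $l=\{u,v\}$ with $u\in C_i$ and $v\in C_j$. Every pair of edges adjacent to $l$ satisfies $f(G,l,l')=2$: their non-shared endpoints either lie in different parts (producing a triangle) or in the same part but with a common neighbour in a third part (producing a 4-cycle). For a disjoint pair $\{l,\{x,y\}\}$, the 4-set $\{u,v,x,y\}$ induces $K_4$ iff $x,y$ lie in two distinct parts both different from $C_i,C_j$; in that case $f=0$, and otherwise a 4-cycle on $\{u,v,x,y\}$ through $l$ and $\{x,y\}$ still exists and gives $f=2$. Counting these $K_4$-inducing disjoint pairs as $\tfrac{1}{2}\bigl[(v-n_i-n_j)^2-\sum_{h\neq i,j}n_h^2\bigr]$ and substituting into the definition of $Z(G,l)$, after using $2|E|=v^2-\sum_h n_h^2$, yields
\[
	Z(G,l)=2\bigl[\,v(n_i+n_j-2)-(n_i^2+n_j^2+n_i n_j)+3\,\bigr].
\]

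Requiring $Z(G,l)=0$ for every edge leaves three possibilities for $(n_i,n_j)$. If $n_i=n_j=1$, the bracket vanishes identically. If $n_i=1$ and $n_j=m\geq 2$, the equation reduces to $v(m-1)=(m-1)(m+2)$, forcing $v=m+2$, so the parts other than $C_i,C_j$ contain exactly one vertex in a singleton part. If $n_i,n_j\geq 2$, set $s=n_i+n_j$ and $p=n_i n_j$: the estimate $(n_i-1)(n_j-1)\geq 1$ gives $p\geq s$, so $v=s+2-(p-1)/(s-2)<s+1$, hence $v=s$ and there is no third part, contradicting $k\geq 3$. Combining: either every part is a singleton (giving $G\cong K_n$), or exactly one part has size at least $2$ accompanied by two singleton parts (giving $G\cong K_{1,1,n-2}$). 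The main technical difficulty is the enumeration behind the local formula; the remaining arithmetic is elementary.
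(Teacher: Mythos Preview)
Your argument is correct and follows a genuinely different, more structural route than the paper's.

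The paper, after extracting the same initial consequences ($E_3=E$ and $G=G^{(3b)}(l)$ for every $l$), proceeds by a direct case analysis: assuming $G\not\cong K_{1,1,n-2}$, it locates specific $5$- or $6$-vertex induced configurations (the graph $H$ of Figure~\ref{interseznonvuota}) and then invokes an exhaustive computational check that among all $5$-vertex supergraphs of $H$ only $K_5$ satisfies $Z(G',e)=0$ for every edge. Your approach replaces this with two clean global steps: first, you observe that the condition $N[u]\cup N[v]=V$ for every edge forces non-adjacent vertices to share their open neighbourhood, which characterises $G$ as complete multipartite $K_{n_1,\dots,n_k}$ with $k\ge 3$; second, you derive a closed formula for $Z(G,l)$ on such graphs and solve the resulting Diophantine constraint. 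This avoids both the subgraph hunt and the appeal to a finite computer check, and yields the two families $K_n$ and $K_{1,1,n-2}$ by elementary arithmetic.

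One small point worth making explicit: the implication ``$N=0$ $\Rightarrow$ $N[u]\cup N[v]=V$'' is not quite immediate, since vertices outside $N[u]\cup N[v]$ could in principle be introduced in step~(S4) with a single edge. You need the observation that the last vertex added in~(S4) would then have degree~$1$ in $G$, contradicting $E_3=E$. (Equivalently, one may simply quote the stronger fact $G=G^{(3b)}(l)$ established in the paragraph preceding the lemma.) The remaining steps---the multipartite characterisation, the count of $K_4$-inducing disjoint pairs, and the arithmetic in the three cases $(1,1)$, $(1,m)$, $(\ge 2,\ge 2)$---are all correct as written.
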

\begin{proof}
	Since $B(G,l)=0$ for any $l\in E$, then $E_3=E$, so $n \geq 3$.
	If $G \not \cong K_{1,1,n-2}$, then for any $l \in E$ there is $f \in E$ which is in a 4-cycle containing $l$ but not in any 3-cycle of $G$ containing $l$, and for convenience $f$ is not incident to $l$. We want to show that for any $l= \lbrace v, w \rbrace \in E$ and for any $x \in V \smallsetminus \lbrace v,w \rbrace$, $G$ has the 3-cycle $ \lbrace v,w,x \rbrace$. In fact, if it doesn't hold, then there exists an $l= \lbrace v, w \rbrace$ and a vertex $u$ such that $ \lbrace u,v \rbrace \in E$ but $\lbrace u, w \rbrace \not \in E$. Since $G=G^{(3b)}(l)$ there exists a 4-cycle containing $l,g$ with vertices $\lbrace u,v,w,x \rbrace$, and since $B(G,l)=0$ we can suppose that such 4-cycle is not a page, so $f= \lbrace v,x \rbrace \in E(G)$. Using that $G \not \cong K_{1,1,n-2}$, we find an $h = \lbrace y,z \rbrace \in E$ not adjacent to $f$, and again by $B(G,f)=0$ we can suppose that $f$ and $h$ are contained in a 4-cycle that is not a page. Depending on the intersection of $\lbrace u,w \rbrace$ and $\lbrace y,z \rbrace$ we fall into two cases.
	
	\begin{itemize}
		\item[i)] $\lbrace u,w \rbrace \cap \lbrace y,z \rbrace \neq \emptyset$. Then $G$ contains a subgraph $H$ with $5$ vertices given in Figure \ref{interseznonvuota}.
	
	\begin{figure}[!ht]
		\begin{tikzpicture}[scale=0.7]
			
			\node at (0,2) 	   	 	{$\bullet$};
			\node at (-0.5,2)		{$w$};
			\node at (6.5,4)			{$\bullet$};
			\node at (6.7,4.5)		{$z$};
			\node at (6.5,0)			{$\bullet$};
			\node at (7.5,-0.5)		{$u (=y)$};
			\node at (2.5,0) 			{$\bullet$};
			\node at (2.4,-0.5)		{$v$};
			\node at (2.5,4) 			{$\bullet$};
			\node at (2.4,4.5)		{$x$};
			
			\path
			(2.5,0) edge node {\hspace{-4mm}$f$} (2.5,4)
			(2.5,0) edge node[above] {$l$} (0,2)
			(0,2) edge node {} (2.5,4)
			(2.5,4) edge node {} (6.5,4)
			(2.5,4) edge node {} (6.5,0)
			(2.5,0) edge node {} (6.5,0)
			(6.5,0) edge node[left] {$h$} (6.5,4);
			
		\end{tikzpicture}
		\caption{$\lbrace u,w \rbrace \cap \lbrace y,z \rbrace \neq \emptyset$.} \label{interseznonvuota}
	\end{figure}
	
	\item[ii)] $\lbrace u,w \rbrace \cap \lbrace y,z \rbrace = \emptyset$. Then $G$ has a subgraph with $6$ vertices given in Figure \ref{intersezvuota}.
	
		\begin{figure}[!ht]
		\begin{tikzpicture}[scale=0.7]
			
			\node at (-2,2) 		{$\bullet$};
			\node at (-2.5,2)		{$u$};
			\node at (0,2) 	   	 	{$\bullet$};
			\node at (-0.5,2)		{$w$};
			\node at (6.5,4)			{$\bullet$};
			\node at (6.7,4.5)		{$z$};
			\node at (6.5,0)			{$\bullet$};
			\node at (6.7,-0.5)		{$y$};
			\node at (2.5,0) 			{$\bullet$};
			\node at (2.4,-0.5)		{$v$};
			\node at (2.5,4) 			{$\bullet$};
			\node at (2.4,4.5)		{$x$};
			
			\path
			(-2,2) edge node {} (2.5,4)
			(-2,2) edge node {} (2.5,0)
			(2.5,0) edge node {\hspace{-4mm}$f$} (2.5,4)
			(2.5,0) edge node[above] {$l$} (0,2)
			(0,2) edge node {} (2.5,4)
			(2.5,4) edge node {} (6.5,4)
			(2.5,4) edge node {} (6.5,0)
			(2.5,0) edge node {} (6.5,0)
			(6.5,0) edge node[left] {$h$} (6.5,4);
			
		\end{tikzpicture}
		\caption{$\lbrace u,w \rbrace \cap \lbrace y,z \rbrace = \emptyset$.} \label{intersezvuota}
	\end{figure}
	\end{itemize}
	We observe that, given $G'$ an induced subgraph of $G$ with 5 vertices which contains an isomorphic copy of $H$ as subgraph, for any $e$ edge of $G'$ we can construct a sequence of subgraphs starting from $\lbrace e \rbrace$ ending in $G$ such that $Z(G_j,e)=0$ for some $j >0$, $Z(G_i,e)$ is weakly increasing for any $i \geq j$ and $Z(G_k,e) = Z(G',e)$ for some $k>j$; the construction is given by performing (S1), (S2), and then we add all the remaining edges of $G'$ at the beginning of (S3).
	
	We can verify with pure computations that a graph $G'$ with 5 vertices that contains (an isomorphic copy of) $H$ is such that $Z(G',e)=0$ for any $e \in E(G')$ if and only if $G'$ is the complete graph $K_5$. So, if we come back to the case i), the induced subgraph of $G$ with vertices $\lbrace u,v,w,x,z \rbrace$ has to be $K_5$, in contrast with the fact that $\lbrace u, w \rbrace \not \in E$. We end up to the same contradiction in the case ii), applying this argument first with the induced subgraph with vertices $\lbrace v,w,x,y,z \rbrace$, then with the subgraph with vertex set $ \lbrace u,v,w,x,y \rbrace$.
	
	Once we have that for any $l= \lbrace v, w \rbrace \in E$ and for any $x \in V \smallsetminus \lbrace v,w \rbrace$ the 3-cycle $ \lbrace v,w,x \rbrace$ is in $G$, it is simple to show that $G \cong K_n$.
\end{proof}
\begin{corollary}\label{cor: Z = 0}
    Let $G=([n],E)$ be a connected graph. Then $z_2(G)=0$ if and only if either $G\cong K_n$, $G\cong K_{1,1,n-2}$ or $G\cong K_{2,n-2}$.
\end{corollary}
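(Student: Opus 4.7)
The plan is to obtain the characterization essentially as a corollary of the two lemmas preceding it, together with a direct computation for the converse.

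For the forward direction, I would first argue that if $z_2(G) = 0$ then $B_{\max}(G) := \max_{l \in E} B(G,l) \leq 1$. This is the content of the observation opening the analysis: if some edge $l$ had $B(G,l) \geq 2$, then the bounds on $Z(G,l) + Z(G,l')$ from Proposition \ref{inequalityedge}, summed with the residual bound $Z(G,f) \geq -2B(G,f) + 4N(f)$ on the remaining edges, would force $2z_2(G) = \sum_{l \in E} Z(G,l) > 0$. Once $B_{\max}(G) \leq 1$ is in hand, I would split into two cases: if some $l$ attains $B(G,l) = 1$, the first lemma above yields $G \cong K_{2,n-2}$, while if $B(G,l) = 0$ for every $l \in E$, the second lemma yields $G \cong K_n$ or $G \cong K_{1,1,n-2}$. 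Small degenerate values of $n$ where these families overlap can be verified by inspection.

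For the converse direction, I need to verify that each of $K_n$, $K_{1,1,n-2}$, $K_{2,n-2}$ satisfies $z_2(G) = 0$, i.e.\ equality holds in Theorem \ref{thm:edges_SEP_ineq}. The invariants $|V|$, $|E|$, and $|E_3(G)|$ are immediate from the structure of the graph, while $f_1(P_G)$ is counted directly from the characterization given in the lemma following Theorem \ref{thm:edges_SEP_ineq}: it equals half the number of pairs of oriented edges of $G$ not lying in a common oriented $3$- or $4$-cycle. The high symmetry of the three graph families makes this enumeration routine; matching the resulting count against $|E|(2|V|-5) - |E_3(G)|$ confirms equality in each case.

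The main obstacle is the first step of the forward direction, namely the reduction to $B_{\max}(G) \leq 1$. This requires carefully aggregating the inequalities from the proof of Proposition \ref{inequalityedge} over all edges of $G$ and tracking the dichotomy $l \in E_3$ versus $l \notin E_3$, so that the potentially negative contributions are all absorbed by the surplus terms $4N(f)$. Once this is established, the remainder is case analysis and the computational verification for the three families.
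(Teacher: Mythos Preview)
Your proposal is correct and follows essentially the same route as the paper. The paper also reduces to $B_{\max}(G)\le 1$ using the explicit bounds on $Z(G,l)+Z(G,l')$ from the proof of Proposition~\ref{inequalityedge}, then invokes the two preceding lemmas for the cases $B=1$ and $B=0$; the converse is left implicit and, as you note, follows from a direct count.

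One remark on what you flag as the ``main obstacle'': the paper handles the reduction to $B_{\max}(G)\le 1$ in a single line rather than by a careful aggregation. If $B\ge 2$ for the maximizing edge $l$, then in each of the three cases of the proof of Proposition~\ref{inequalityedge} the bound on $Z(G,l)+Z(G,l')$ is already $\ge 2$ (using $B'\ge 1$ in case~(3)), while $Z(G,f)\ge 0$ for all remaining edges $f$; hence $2z_2(G)=\sum_f Z(G,f)>0$. So this step is lighter than you anticipate. The paper also records, in the same opening paragraph of Section~\ref{sec:z_2(G)=0}, a few auxiliary consequences of $z_2(G)=0$ (e.g.\ $Z(G,f)=0$ for every $f$, and $G=G^{(3b)}(f)$ for every $f$) that feed into the two lemmas you are citing; make sure these are available when you invoke them.
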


\section{Connection to the Ehrhart theory of symmetric edge polytopes} \label{sec:Ehrhart}

We now turn our attention to the Ehrhart theory of symmetric edge polytopes. Inspired by \cite{AJKKV23} our goal is to investigate the effect of deleting an edge in the graph $G$ on the $h^*$-polynomials $h^*_{P_G}(t)$. We thus fix an edge $e= \lbrace i,j \rbrace \in E$ and study the relationship between the polytopes $P_G$ and $P_{G \sm e}$. The idea is to find the "right" choice of edge $e$ to delete, which will allow us to control the changes of the $h^*$-polynomials.

Since the dimension of a symmetric edge polytope depends only on number of vertices and connected components of the graph, if there is a cycle in $G$ that contains $e$, then $P_{G \sm e}$ has the same dimension of $P_G$. Thus $h^*_{P_G}(t) - h^*_{P_{G \sm e}}(t)$ remains palyndromic and we can formulate the following conjecture.

\begin{conjecture}
\label{conj:reduction}
Let $G$ be a 2-connected graph. There exists an edge $e \in E$ such that $h^*_{P_G}(t) - h^*_{P_{G \sm e}}(t)$ is $\gamma$-nonnegative.
\end{conjecture}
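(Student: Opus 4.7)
The plan is to push the geometric machinery behind \Cref{thm: C}, which only controlled $\gamma_2$, to all the $\gamma$-coefficients of $h^*_{P_G}(t)-h^*_{P_{G\sm e}}(t)$. First, since $G$ is 2-connected, every edge $e\in E$ lies on a cycle, so $\dim P_{G\sm e}=\dim P_G$ and both $h^*$-polynomials are palindromic of the same degree by Hibi's theorem; hence the difference is palindromic as well, and $\gamma$-nonnegativity is a meaningful question. For a fixed candidate edge $e$, I would work with the HJM triangulation $\Gamma_G$ of $P_G$ (\Cref{thm:HJMtriang}) associated to an ordering on $E$ in which $e$ is maximal, so $x_e,y_e$ are the largest non-trivial variables. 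The binomials in the Gr\"obner basis that involve $x_e$ or $y_e$ are precisely those coming from cycles of $G$ through $e$, together with $x_ey_e-z^2$; consequently the subcomplex of $\Gamma_G$ induced by the vertices other than $\pm(e_i-e_j)$ is exactly the HJM triangulation $\Gamma_{G\sm e}$ of $P_{G\sm e}$ for the restricted ordering.

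Because both triangulations are unimodular, Stanley's theorem (the unimodular specialisation of \Cref{BetkeMcMullen}) gives $h^*_{P_G}(t)-h^*_{P_{G\sm e}}(t) = h_{\Gamma_G}(t)-h_{\Gamma_{G\sm e}}(t)$, and the right-hand side is a concrete alternating sum ranging over the facets of $\Gamma_G$ incident to one of $\pm(e_i-e_j)$. By \Cref{cor:unimodsimplexchar} these facets correspond to oriented spanning trees of the relevant facet-subgraph that use an orientation of $e$, and the involution $x_e\leftrightarrow y_e$ pairs them up, rendering palindromicity visible term-by-term. Expanding the paired contributions in the basis $\{t^j(1+t)^{d-2j}\}_j$ should produce an explicit expression for each coefficient $\gamma_j(h^*_{P_G}-h^*_{P_{G\sm e}})$ as a signed count of certain subgraphs of $G$ adjacent to $e$; for $j=2$ this count specialises to the quantity $Z(G,e)$ of \eqref{zeta} appearing in \Cref{sec:z_2(G)}, which in general can be negative.

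To close the argument, one must then choose $e$ adaptively so that all the $\gamma_j$ of the difference are simultaneously nonnegative. The 2-connectedness assumption is crucial: a natural candidate is an edge belonging to a minimal (and in particular the last) ear of an open-ear decomposition of $G$, since such an edge lies on relatively few short cycles and tends to minimise the negative contributions detected in \Cref{sec:z_2(G)}. The main obstacle is precisely this last step: \Cref{ex:negative-gamma} already shows that bad choices of $e$ produce a negative $\gamma_2$ for the difference, and higher $\gamma$-coefficients involve finer combinatorial invariants whose signs could a priori fluctuate more severely. My expectation is that the conjectural formula of \Cref{sec:conjectured_formula} supplies the exact combinatorial identity needed to control these signs, after which a structural induction—either on $|E|$ or on the number of ears in an ear decomposition, with base case a single cycle $C_n$ (where $P_G$ differs from a cross-polytope only in a controlled way and the difference can be computed by hand)—should deliver the result.
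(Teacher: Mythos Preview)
The statement you are addressing is \Cref{conj:reduction}, which in the paper is explicitly a \emph{conjecture}: the authors do not prove it, and indeed they introduce the weaker \Cref{conj: zG nonnegative} precisely because \Cref{conj:reduction} is out of reach. There is therefore no ``paper's own proof'' to compare against; your task was to prove an open problem, and what you have written is not a proof but a research outline.

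Several steps of your outline are sound and are in fact already carried out in the paper in some form. The compatibility of the HJM triangulations of $P_G$ and $P_{G\sm e}$ when $e$ is chosen maximal in the edge order is correct, and the paper's \Cref{lem:diff_hstar} gives exactly the kind of formula for $h^*_{P_G}(t)-h^*_{P_{G\sm e}}(t)$ that you are aiming at (though via Betke--McMullen on the cone over the visible facets rather than via a direct $h$-vector difference). Likewise, your remark that the $j=2$ case reduces to the quantity $Z(G,e)$ is precisely the content of \Cref{thm: C} and \eqref{eq: z2 is z2}.

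The genuine gap is the one you yourself flag: you have no mechanism for choosing $e$ so that \emph{all} $\gamma_j$ of the difference are simultaneously nonnegative. Your suggestion to take $e$ on a minimal ear of an ear decomposition is heuristic; there is no argument that this controls the higher $\gamma$-coefficients, and the paper's \Cref{ex:negative-gamma} already shows how delicate even $\gamma_2$ is. More seriously, your proposed endgame relies on the ``conjectural formula of \Cref{sec:conjectured_formula}'', i.e.\ \Cref{cong: formula for difference}, which is itself an open conjecture in the paper; and even granting that formula, the authors observe in \Cref{ex : G on 8v} that individual summands can fail to be $\gamma$-nonnegative, so it would not immediately yield what you need. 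In short, your proposal identifies the right objects and the right difficulty, but it does not supply the missing idea, and neither does the paper.
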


This is enough to prove $\gamma$-positivity of the $h^*$-polynomial of symmetric edge polytopes as it has been exploited in \cite{AJKKV23} for the quadratic coefficient, as we prove in the following proposition.
\begin{proposition}
\label{prop:goal_implies_gamma}
\Cref{conj:reduction} implies the Oshugi--Tsuchiya conjecture (\Cref{conj: OT}).
\end{proposition}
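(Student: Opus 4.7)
The natural approach is induction on $|E|$, splitting the inductive step into the 2-connected case (handled directly by Conjecture \ref{conj:reduction}) and the non-2-connected case (handled via a block decomposition of $h^*_{P_G}(t)$). The base cases $|E|\leq 1$ are immediate, since $h^*_{P_G}(t)$ is either $1$ or $1+t$, both trivially $\gamma$-nonnegative.

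First, I would record a short multiplicativity lemma for the $\gamma$-expansion. If $p(t)$ and $q(t)$ are palindromic of degrees $d$ and $d'$ with $\gamma$-expansions $\sum_i\alpha_i t^i(1+t)^{d-2i}$ and $\sum_j\beta_j t^j(1+t)^{d'-2j}$ respectively, then expanding
\[
    p(t)q(t)=\sum_k\Bigl(\sum_{i+j=k}\alpha_i\beta_j\Bigr)t^k(1+t)^{d+d'-2k}
\]
shows that the $\gamma$-coefficients of $p(t)q(t)$ are convolutions of those of $p$ and $q$, so $\gamma$-nonnegativity is preserved under multiplication of palindromic polynomials. Combining this with the fact that $h^*_{P_G}(t)$ depends only on the graphic matroid of $G$ (\cite[Theorem 4.6]{DJK}), and that the graphic matroid of $G$ is the direct sum of the graphic matroids of its blocks, yields the factorization $h^*_{P_G}(t)=\prod_B h^*_{P_B}(t)$ over the blocks $B$ of $G$. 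Hence, when $G$ is connected but not 2-connected, each block has strictly fewer edges than $G$, and the induction hypothesis applied to each block together with the multiplicativity lemma closes this case.

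Now suppose $G$ is 2-connected. Conjecture \ref{conj:reduction} supplies an edge $e\in E$ such that $h^*_{P_G}(t)-h^*_{P_{G\sm e}}(t)$ is $\gamma$-nonnegative. Since $G$ is 2-connected, $G\sm e$ is connected with the same vertex set, so $P_{G\sm e}$ is reflexive of the same dimension as $P_G$; in particular $h^*_{P_{G\sm e}}(t)$ is palindromic of the same degree as $h^*_{P_G}(t)$, and both admit $\gamma$-expansions in the same basis. The induction hypothesis applied to the connected graph $G\sm e$ (which has $|E|-1$ edges) gives $\gamma$-nonnegativity of $h^*_{P_{G\sm e}}(t)$, and then
\[
    h^*_{P_G}(t)=h^*_{P_{G\sm e}}(t)+\bigl(h^*_{P_G}(t)-h^*_{P_{G\sm e}}(t)\bigr)
\]
exhibits $h^*_{P_G}(t)$ as a sum of two $\gamma$-nonnegative palindromic polynomials of equal degree, so it is $\gamma$-nonnegative.

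The only point that requires actual care is the block factorization of $h^*_{P_G}(t)$, since that is precisely what makes the reduction from arbitrary connected graphs to the 2-connected case handled by Conjecture \ref{conj:reduction}. Granted this, the remainder of the argument is routine bookkeeping with the $\gamma$-basis, an elementary convolution identity, and a single application of the hypothesis.
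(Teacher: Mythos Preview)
Your argument follows essentially the same route as the paper: reduce to 2-connected graphs via multiplicativity of $h^*$ over the block decomposition, then induct by removing the edge supplied by Conjecture~\ref{conj:reduction}. The paper inducts on the cyclomatic number rather than $|E|$, but this is purely cosmetic, and your explicit convolution argument for why $\gamma$-nonnegativity is closed under products is a nice addition that the paper leaves implicit.

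One point to tighten: your justification for the factorization $h^*_{P_G}(t)=\prod_B h^*_{P_B}(t)$ does not quite go through as written. Knowing that $h^*_{P_G}$ depends only on the graphic matroid, together with $M(G)\cong\bigoplus_B M(B)$, does not by itself force multiplicativity---an arbitrary matroid invariant need not be multiplicative over direct sums. What is actually needed is that $P_G$ is (up to lattice equivalence) the free sum of the $P_B$ when $G$ has a cut vertex, so that $h^*$ multiplies; this is exactly \cite[Proposition~5.2]{OT21}, which the paper cites directly. With that citation in place of the matroid argument, your proof is complete and matches the paper's.
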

\begin{proof}
Since every graph decomposes uniquely into its 2-connected components and  \cite[Proposition~5.2]{OT21} implies that the corresponding $h^*$-polynomial behaves multiplicative with respect to this decomposition, we can reduce \Cref{conj: OT} to the case of 2-connected graphs.

We apply induction on the cyclomatic number 
\[
\cy(G) := \abs{E} - \abs{V} + 1
\]
of a 2-connected graph $G = (V,E)$. For the base case, we assume that $\cy(G) = 0$. Thus, $G$ is a tree and the corresponding $h^*$-polynomial is known to be $\gamma$-nonnegative (see, for example, \cite[Example~5.1]{OT21}). 

Now, pick an edge $e \in E$ that satisfies \Cref{conj:reduction}. Since
\[
\cy(G) = \cy(G\sm{e}) + 1
\]
(see, for example, the proof of \cite[Theorem~3.2]{AJKKV23}), the induction hypothesis implies that the $h^*$-polynomial of $P_{G\sm{e}}$ is $\gamma$-nonnegative. The proof follows from \Cref{conj:reduction} and the fact that $\gamma$-positivity is preserved by taking linear combinations.
\end{proof}

In the rest of this section, we will give a formula for $h^*_{P_G}(t) - h^*_{P_{G\sm ij}}(t)$ in terms of a nice triangulation of $P_G \sm P_{G\sm ij}$, and use this formula to prove a first step in the direction of \Cref{conj:reduction}: that is, we can always find an edge so that the second gamma coefficient is non-negative.

\subsection{The $\ell^*$-polynomial of special simplices}\label{par:local_h_special_simplices}

Before we can understand the behaviour of $P_G \sm P_{G\sm ij}$, we turn our attention to a special family of simplices contained in there. We are in particular interested in their $\ell^*$-polynomials, since they will play a fundamental role in our use of the Betke-McMullen formula (\Cref{BetkeMcMullen}) in the next subsection.

Let $e=\{i,j\}$ be an edge of $G$ and $\sigma$ a unimodular simplex contained in a facet of $P_{G\setminus e}$ visible from the point $e_{ij}:= e_j -e_i$. By Corollary \ref{cor:unimodsimplexchar}, the associated oriented subgraph $G(\sigma)$ is an oriented spanning tree inside $G \sm e$. We now define the (non necessarily unimodular) simplex $\hat{\sigma}:=\text{conv}\{\sigma \cup \{e_{ij}\}\}$, obtained by adding the vertex $e_{ij}$ to $\sigma$. The oriented subgraph $G(\hat{\sigma})$ contains a unique cycle, namely $\vec{P}_\sigma\cup\vec{ij}$, where $\vec{P}_\sigma$ is contained in $G(\sigma)$.
    
We are interested in computing the $h^*$- and $\ell^*$-polynomials of $\hat{\sigma}$. To do so, we introduce the following parameters. 

\begin{align*}
    a_\sigma :&= \text{ number of edges of }\vec{P}_\sigma \text{ oriented against }\vec{ij} \text{ when traversing the cycle } \vec{P}_\sigma \cup \vec{ij},\\
    b_\sigma :&= \text{ number of edges of }\vec{P}_\sigma \text{ oriented like }\vec{ij},\\
    l_\sigma :&= a_\sigma + b_\sigma = \text{ length of }\vec{P}_\sigma, \\
    h_\sigma :&= a_\sigma - b_\sigma = \text{ number of lattice hyperplanes parallel to } \sigma \text{ between } e_{ij} \text{ and }\sigma.
\end{align*}

\begin{figure}
    \centering
\begin{tikzpicture}
	\node[label=above:{$i$}] (i) at (0,2) 	  	{$\bullet$};
	\node[label=above:{$j$}] (j) at (1,2) 	   	{$\bullet$};
    \node[label=left:{$i_1$}] (1) at (-.5,.5) 	   	{$\bullet$};    
    \node[label=below:{$i_2$}] (2) at (.4,.1) 	   	{$\bullet$};   
    \node[label=right:{$i_3$}] (3) at (.9,.9) 	   	{$\bullet$};
    \draw (i) edge[->] (j)
    (i) edge[->, red] (1)
    (1) edge[->, red] (2)
    (3) edge[->, blue] (2)
    (3) edge[->, red] (j);
\end{tikzpicture}
    \caption{Unique cycle in $T_\sigma \cup \vec{ij}$. Edges in red contribute to $a_\sigma$, while blue to $b_\sigma$.}
    \label{fig:placeholder}
\end{figure}
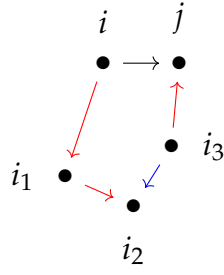

\begin{remark}
    Note that the condition that $e_{ij}$ is visible from $\sigma$ guarantees that $h_\sigma \geq 1$, precisely because it can be interpreted as the number of lattice hyperplanes between $e_{ij}$ and $\sigma$.  This follows from \Cref{thm:facetchar}: if the equation for the facet on which $\hat{\sigma}$ is lying is $\sum_{v\in V} f(v)x_v =1$, then the point $e_{ij}$ lies on the translate of this hyperplane with right hand side equal to $f(j)-f(i)$. Since the orientation of an edge in $\sigma$ corresponds to whether the function $f$ is increasing or decreasing by $1$ from the source vertex to the target vertex, we have that $f(j)-f(i)=a_\sigma - b_\sigma$.
\end{remark}

With these in hand, we can give a formula for the $\ell^*$-polynomial of $\hat{\sigma}$.

\begin{lemma}\label{lem:box_pol}
    Let $\sigma$ be a unimodular simplex contained in a facet of $P_{G\setminus e}$ visible from the point $e_{ij}$. Let $\hat{\sigma}=\text{conv}\{\sigma \cup \{e_{ij}\}\}$. Then
    \[
    	\ell^*_{\hat{\sigma}}(t)=\begin{cases}
    		 t^{b_\sigma + 2} + \dots + t^{a_\sigma -1} & \text{if } G(\hat{\sigma}) \text{ is a cycle and }a_\sigma - b_\sigma \geq 3\\
    		 0 & \text{otherwise}.
    	\end{cases}
    \]   
\end{lemma}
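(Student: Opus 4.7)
The plan is to compute $\ell^*_{\hat\sigma}(t)$ directly from its combinatorial definition, by enumerating the lattice points of the open fundamental parallelepiped $\overset{\circ}{\Pi}(\hat\sigma)$ graded by the homogenizing coordinate. Writing a generic point of $\overset{\circ}{\Pi}(\hat\sigma)$ as
\[
\mu \, \bar{e}_{ij} + \sum_{k=1}^{n-1} \lambda_k \bar{v}_k, \qquad \mu, \lambda_k \in (0,1),
\]
where the $v_k$ are the vertices of $\sigma$ realized as oriented edge vectors of the spanning tree $T := G(\sigma)$ of $G$ (via \Cref{cor:unimodsimplexchar}), the requirement of lying in $\ZZ^{n+1}$ decouples into (i) the net flow at each vertex of $G$ (with flow $\mu$ carried on $\vec{ij}$ and $\lambda_k$ on the $k$-th tree edge) is an integer, and (ii) the height $h := \mu + \sum_k \lambda_k$ is an integer.

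The first step is to dispose of the case where $G(\hat\sigma)$ is not a cycle. Since $G(\hat\sigma) = T \cup \{\vec{ij}\}$ is unicyclic with unique cycle $\vec P_\sigma \cup \vec{ij}$, failure to be that cycle forces a tree branch attached to the cycle, and in particular a leaf $w$ of $G(\hat\sigma)$ off the cycle. The unique edge incident to $w$ carries a flow $\lambda \in (0,1)$ and contributes net flow $\pm\lambda$ at $w$, which cannot be an integer; hence $\overset{\circ}{\Pi}(\hat\sigma)$ contains no lattice points and $\ell^*_{\hat\sigma}(t) = 0$, matching the second line of the formula.

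In the remaining case $T$ is an oriented Hamilton path from $i$ to $j$ and $G(\hat\sigma)$ is a Hamilton cycle. Label the cycle edges $e_1 = \vec{ij}, e_2, \ldots, e_n$ in traversal order, and set $\epsilon_s = +1$ if $e_s$ is oriented along the traversal and $-1$ otherwise; then $\epsilon_1 = +1$, and among $e_2,\ldots,e_n$ exactly $b_\sigma$ have $\epsilon_s = +1$ and $a_\sigma$ have $\epsilon_s = -1$. A short local computation at the cycle vertex between $e_s$ and $e_{s+1}$ identifies the net flow there as $\epsilon_s\lambda_s - \epsilon_{s+1}\lambda_{s+1}$; combined with $\lambda_s \in (0,1)$, integrality forces $\lambda_s = \lambda_{s+1}$ when $\epsilon_s = \epsilon_{s+1}$ and $\lambda_s + \lambda_{s+1} = 1$ when $\epsilon_s = -\epsilon_{s+1}$. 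Setting $\lambda_1 = t$ and propagating these relations around the cycle produces the one-parameter family
\[
\lambda_s(t) = \begin{cases} t & \text{if } \epsilon_s = +1, \\ 1-t & \text{if } \epsilon_s = -1, \end{cases}
\]
and closure around the cycle is automatic; the alternative global sign choice merely reparametrizes the same family via $t \leftrightarrow 1-t$, so each valid $\lambda$-vector is counted exactly once.

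Substituting into the height yields $h(t) = (1 + b_\sigma)t + a_\sigma(1-t) = a_\sigma + (1 - h_\sigma)t$, which as $t$ ranges over $(0,1)$ sweeps the open interval $(b_\sigma+1, a_\sigma)$ bijectively whenever $h_\sigma \geq 2$. The integers in this range are exactly $b_\sigma+2, b_\sigma+3, \ldots, a_\sigma-1$, yielding $\ell^*_{\hat\sigma}(t) = t^{b_\sigma+2} + \cdots + t^{a_\sigma-1}$ when $h_\sigma = a_\sigma - b_\sigma \geq 3$ and $\ell^*_{\hat\sigma}(t) = 0$ otherwise. The main technical subtlety lies in the step from the local integer net-flow conditions to the rigid $\lambda_s \in \{t, 1-t\}$ structure: this uses in an essential way that the expressions $\lambda_s - \lambda_{s+1}$ and $\lambda_s + \lambda_{s+1}$ are confined to $(-1,1)$ and $(0,2)$, so integrality pins them to the unique integer values $0$ and $1$ rather than leaving a full congruence class of possibilities.
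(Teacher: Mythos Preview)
Your proof is correct and follows essentially the same approach as the paper's: both arguments parametrize the open fundamental parallelepiped, use a leaf of $G(\hat\sigma)$ to kill all lattice points in the non-cycle case, and in the cycle case propagate the local integrality constraints at each vertex to force the one-parameter family $\lambda_s\in\{t,1-t\}$ before reading off the height. Your presentation is slightly more explicit (introducing the signs $\epsilon_s$ and checking closure around the cycle), but the argument is the same.
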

\begin{proof}
    Let us denote by $\vec{E}(\sigma)$ the collection of oriented edges of the tree $G(\sigma)$. Then we can express the fundamental parallelepiped as follows 
    \begin{equation}\label{eq:fund_parallel}
    \overset{\circ}{\Pi}(\hat{\sigma}) = \left\{ \left(\lambda_{ij}(e_j-e_i) + \sum_{\vec{kl} \in \vec{E}(\sigma)} \lambda_{kl} (e_l - e_k), \, \lambda_{ij} + \sum_{\vec{kl} \in \vec{E}(\sigma)} \lambda_{kl} \right) \in \RR^{d+1} \Bigg | \,\lambda_{ij}, \lambda_{kl} \in (0,1)\right\}.
    \end{equation}

    If $\vec{kl}$ is an edge ending in a leaf of $G(\hat{\sigma})$, say $l$ is the leaf, then the $l$-th coordinate of any point in $\overset{\circ}{\Pi}(\hat{\sigma})$ is $\lambda_{kl}$, since $e_l$ only appears in the summand $e_l-e_k$. Since $\lambda_{kl} \in (0,1)$, it follows that there are no integer points in $\overset{\circ}{\Pi}(\hat{\sigma})$ and thus $\ell^*(\hat{\sigma})=0$. 

    Thus the only simplices $\hat{\sigma}$ which have $\ell^*(\hat{\sigma})\neq 0$ are those for which the corresponding subgraph $G(\hat{\sigma})$ is a cycle. In this case, let $i_0=i, i_1, \dots, i_d=j, i_0$ be the underlying cycle.  Without loss of generality, suppose that the sequence $i_0, i_1, \dots, i_d$ is just $0, 1, \dots, d$. Consider a point $p \in \overset{\circ}{\Pi}$. Then $e_t$ appears twice in the sum from \ref{eq:fund_parallel} defining $p$, once with the coefficient $\pm \lambda_{t-1,t}$ and once with $\pm \lambda_{t, t+1}$. If the edges $\{t-1, t\}$ and $\{t, t+1\}$ appear with opposite orientations in $G(\hat{\sigma})$, then the $t$-th coordinate of $p$ is $p_t = \lambda_{t-1,t}+\lambda_{t,t+1}$, while if the two edges appear with the same orientation, we have $p_t=\lambda_{t-1,t}-\lambda_{t,t+1}$. Thus we have $p \in \ZZ^{d+1}$ if and only if 
    \begin{itemize}
        \item the coefficient of all edges oriented like $\vec{ij}$ in the cycle $G(\hat{\sigma})$ is $\lambda \in (0,1)$, and
        \item the coefficient of edges oriented against $\vec{ij}$ is $1-\lambda$, and
        \item $\lambda \in \{\frac{1}{a_\sigma -b_\sigma -1}, \frac{2}{a_\sigma -b_\sigma -1}, \dots, \frac{a_\sigma -b_\sigma -2}{a_\sigma -b_\sigma -1}\}$. 
    \end{itemize}

    The last condition is obtained by imposing that the last coordinate of $p$ is an integer, which holds if and only if $\lambda (a_\sigma -b_\sigma -1) \in \ZZ$. In order for $\lambda$ to lie in the interval $(0,1)$, we further need that $a_\sigma - b_\sigma \geq 3$.
\end{proof}

\subsection{A formula for the difference of $h^*$-polynomials when removing an edge} \label{par:computationhstar}

In order to compute $h^*_{P_G}(t)-h^*_{P_{G\setminus ij}}(t)$ using the Betke-McMullen formula (\Cref{BetkeMcMullen}), we must triangulate the set
$P_G \sm P_{G \sm ij}$. $P_G$ contains exactly two vertices which $P_{G\sm ij}$ does not, namely $e_{ij}$ and $e_{ji}$. Thus $P_G \sm P_{G \sm ij}$ consists of two connected components, symmetric with respect to the origin, one being the cone of $e_{ij}$ with the facets of $P_{G\sm ij}$ visible from $e_{ij}$.

Consider then a unimodular triangulation $\Gamma^{ij}$ of the complex of facets
of $P_{G\sm ij}$ visible from $e_{ij}$induced by an HJM triangulation of $P_{G\sm ij}$. We can then define $\Delta^{ij}:= \Gamma^{ij} * e_{ij}$, that is, the simplicial complex whose maximal
simplices are the convex hull of a maximal simplex of $\Gamma^{ij}$ together with $e_{ij}$. This simplicial complex indeed triangulates the closure of the connected component of $P_G \sm P_{G \sm ij}$ containing $e_{ij}$ and can therefore be used to express the difference of $h^*$-polynomials.
Note that by construction all simplices in $\Delta^{ij}$ are simplices of the form described in
\Cref{par:local_h_special_simplices}. This allows us to express the difference of the $h^*$-polynomials
after deleting an edge as follows.

\begin{lemma}\label{lem:diff_hstar}
    Let $G$ be a graph and $\Gamma^{ij}$ and $\Delta^{ij}$ be the triangulations described above. Then we have
    \[
    h^*_{P_G}(t)-h^*_{P_{G\setminus ij}}(t) = 2th_{\Gamma^{ij}}(t) + 2\sum_{\substack{F \in \Delta^{ij} \\ G(F) \text{ cycle with } \\ a_F -b_F \geq 3}} h_{\lk_{\Delta^{ij}}(F)}(t)(t^{b_F+2} + \dots + t^{a_F-1}).
    \]
\end{lemma}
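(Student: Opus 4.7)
The plan is to triangulate $P_G$ by extending the HJM triangulation of $P_{G\setminus ij}$ with the cones $\Delta^{ij}$ and $\Delta^{ji}$, apply the Betke--McMullen formula to this triangulation, and then compare with Stanley's theorem applied to $P_{G\setminus ij}$.

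By the central symmetry $x\mapsto-x$ of $P_G$, the set $P_G\setminus P_{G\setminus ij}$ has two connected components: one contains $e_{ij}$ and is triangulated by $\Delta^{ij}$, the other contains $e_{ji}=-e_{ij}$ and is triangulated analogously by $\Delta^{ji}:=\Gamma^{ji}*e_{ji}$. Writing $H$ for the HJM triangulation of $P_{G\setminus ij}$ and $\mathcal{T}:=H\cup\Delta^{ij}\cup\Delta^{ji}$, one checks that $\mathcal{T}$ is a triangulation of $P_G$, with pairwise intersections $H\cap\Delta^{ij}=\Gamma^{ij}$, $H\cap\Delta^{ji}=\Gamma^{ji}$, and $\Delta^{ij}\cap\Delta^{ji}=\Gamma^{ij}\cap\Gamma^{ji}$ (no simplex can contain both apices $e_{ij}$ and $e_{ji}$).

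Applying \Cref{BetkeMcMullen} to $\mathcal{T}$ produces
\[
h^*_{P_G}(t)=\sum_{F\in\mathcal{T}}h_{\lk_{\mathcal{T}}(F)}(t)\,\ell^*_F(t).
\]
Every simplex of $H$ is unimodular, so $\ell^*_F(t)=0$ for $F\in H$ nonempty, while $F=\emptyset$ contributes $h_{\mathcal{T}}(t)$. A face $F\in\Delta^{ij}$ containing $e_{ij}$ is of the form $\hat\sigma=\sigma*e_{ij}$ for $\sigma\in\Gamma^{ij}$, and by \Cref{lem:box_pol} the polynomial $\ell^*_F(t)$ vanishes unless $G(F)$ is a cycle with $a_F-b_F\geq 3$, in which case $\ell^*_F(t)=t^{b_F+2}+\cdots+t^{a_F-1}$. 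Moreover $\lk_{\mathcal{T}}(F)=\lk_{\Delta^{ij}}(F)$ for such $F$, since $e_{ij}$ appears only in simplices of $\Delta^{ij}$, and the analogous contribution from $\Delta^{ji}$ doubles this sum by the central symmetry.

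It remains to show that $h_{\mathcal{T}}(t)=h^*_{P_{G\setminus ij}}(t)+2t\,h_{\Gamma^{ij}}(t)$. Inclusion--exclusion on $f$-polynomials combined with the intersection description above yields $f_{\mathcal{T}}=f_H+f_{\Delta^{ij}}+f_{\Delta^{ji}}-f_{\Gamma^{ij}}-f_{\Gamma^{ji}}$, and since $\Delta^{ij}=e_{ij}*\Gamma^{ij}$ is a cone, the identity $f_{\Delta^{ij}}(t)=(1+t)\,f_{\Gamma^{ij}}(t)$ holds (and similarly for $\Delta^{ji}$), giving $f_{\mathcal{T}}=f_H+t\,f_{\Gamma^{ij}}+t\,f_{\Gamma^{ji}}$. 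Translating through the $f$-to-$h$ relation and accounting for the dimension shift $\dim\Gamma^{ij}=\dim\mathcal{T}-1$ yields $h_{\mathcal{T}}(t)=h_H(t)+t\,h_{\Gamma^{ij}}(t)+t\,h_{\Gamma^{ji}}(t)$. Combining this with Stanley's theorem $h_H(t)=h^*_{P_{G\setminus ij}}(t)$ (valid because $H$ is unimodular) and the symmetry $h_{\Gamma^{ij}}=h_{\Gamma^{ji}}$ closes the argument. The delicate point is verifying that $\mathcal{T}$ is indeed a simplicial triangulation of $P_G$, which reduces to $\Gamma^{ij}$ being the subcomplex of $H$ induced on the facets of $P_{G\setminus ij}$ visible from $e_{ij}$, and to carefully tracking the dimension shift when converting the $f$-vector identity into an $h$-vector identity.
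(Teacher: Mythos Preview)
Your argument is correct, but it is organized differently from the paper's. The paper never triangulates all of $P_G$. Instead it works locally: it uses additivity of the Ehrhart series to write
\[
\tfrac{1}{2}\bigl(h^*_{P_G}(t)-h^*_{P_{G\setminus ij}}(t)\bigr)=h^*_{|\Delta^{ij}|}(t)-(1-t)\,h^*_{|\Gamma^{ij}|}(t),
\]
then applies Betke--McMullen only to $\Delta^{ij}$, and finishes using $h^*_{|\Gamma^{ij}|}=h_{\Gamma^{ij}}=h_{\Delta^{ij}}$ (unimodularity plus the cone identity). Your route instead builds the global (non-unimodular) triangulation $\mathcal{T}=H\cup\Delta^{ij}\cup\Delta^{ji}$ of $P_G$, applies Betke--McMullen once to $\mathcal{T}$, and recovers the $2t\,h_{\Gamma^{ij}}(t)$ term from the $f$-vector inclusion--exclusion $f_{\mathcal{T}}=f_H+t\,f_{\Gamma^{ij}}+t\,f_{\Gamma^{ji}}$ together with the dimension shift. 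The trade-off: the paper avoids having to verify that $\mathcal{T}$ is a genuine triangulation of $P_G$ (which you correctly flag as the delicate step), while you avoid the half-open additivity statement with its $(1-t)$ correction. Both arguments hinge on the same two ingredients, namely \Cref{lem:box_pol} and the cone relation $h_{\Delta^{ij}}=h_{\Gamma^{ij}}$, so the difference is really one of packaging rather than of substance.
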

\begin{proof}
We denote the support of the complex $\Delta^{ij}$ by $|\Delta^{ij}|$.
The set $P_G \setminus P_{G\setminus ij}$ is composed of two symmetric pieces, one of which can be written as $|\Delta^{ij}| \setminus |\Gamma^{ij}|$. Thanks to the additivity of the Ehrhart series, and because all these objects are $n$-dimensional except for $\Gamma^{ij}$, which is $n-1$ dimensional, we have the following equality of their $h^*$-polynomials:

\[
    \frac{1}{2}(h^*_{P_G}(t)-h^*_{P_{G\setminus ij}}(t)) = h^*_{|\Delta^{ij}|}(t)-(1-t)h^*_{|\Gamma^{ij}|}(t) = h^*_{|\Delta^{ij}|}(t)-(1-t)h_{\Delta^{ij}}(t), 
\]
where the second equality follows since $\Gamma^{ij}$ is a unimodular triangulation (so $h^*_{|\Gamma^{ij}|}(t) = h_{\Gamma^{ij}}(t)$), and because $\Delta^{ij}$ is a cone over $\Gamma^{ij}$, we have that $h_{\Delta^{ij}}(t) = h_{\Gamma^{ij}}(t)$.

Applying Betke-McMullen (\Cref{BetkeMcMullen}) and \Cref{lem:box_pol} we have 
\begin{equation*}
    \begin{aligned}
        h^*_{|\Delta^{ij}|}(t) &= \sum_{F \in \Delta^{ij}} h_{\lk_{\Delta^{ij}}(F)}(t)\ell^*_F(t) \\
        &= h_{\Delta^{ij}}(t) + \sum_{\substack{F \in \Delta^{ij} \\ G(F) \text{ cycle with } \\ a_F -b_F \geq 3}} h_{\lk_{\Delta^{ij}}(F)}(t)\ell^*_F(t)\\
        &= \sum_{\substack{F \in \Delta^{ij} \\ G(F) \text{ cycle with } \\ a_F -b_F \geq 3}} h_{\lk_{\Delta^{ij}}(F)}(t)(t^{b_F+2} + \dots + t^{a_F-1})
    \end{aligned}
\end{equation*}
where the $h_{\Delta^{ij}}$ in the second line comes from $F=\emptyset \in \Delta^{ij}$, since by convention $\ell^*_\emptyset(t)=1$.  
By combining the two equations, we obtain the desired formula.
\end{proof}

\subsection{Connection to $z_2(G)$}

We now highlight a connection of \Cref{lem:diff_hstar} with the lower bound of the number of edges of $P_G$ discussed in \Cref{sec:z_2(G)}. 

Let $G=([n], E)$ be a $2$-connected graph, and consider the polynomials $c^{ij}(t):=h^*_{P_G}(t)-h^*_{P_{G\setminus ij}}(t)$, where $ij \in E$. Note that since $G$ is $2$-connected, $c^{ij}(t)$ is still a palindromic polynomial with the same center as the two polynomials $h^*_{P_G}(t)$ and $h^*_{P_{G\setminus ij}}(t)$.

An approach to \Cref{conj: OT} is to to prove that there exists an edge $ij$ in $G$ for which the $\gamma$-polynomial associated with $c^{ij}(t)$ has nonnegative coefficients. To this end, we propose to investigate the following stronger statement: consider the sum over the edges $ij$ of $G$ of the polynomials $c^{ij}(t)$. The result is a palindromic polynomial with the same center, whose associated $\gamma$-polynomial is the sum of the $\gamma$-polynomials of each $c^{ij}(t)$. In particular, the $\gamma$-nonnegativity of this sum would certify the existence of an edge $ij\in E$ for which $h^*_{P_G}(t)-h^*_{P_{G\setminus ij}}(t)$ is $\gamma$-nonnegative.
\begin{conjecture} \label{conj: zG nonnegative}
	Let $G=(V,E)$ be a $2$-connected graph. The polynomial
	\[
	Z_G(t):=\sum_{ij\in E}\gamma(c^{ij}(t))=\sum_{ij\in E}\gamma(h^*_{P_G}(t)-h^*_{P_{G\setminus ij}}(t))=\sum_{ij\in E}\gamma_{P_G}(t)-\gamma_{P_{G\setminus ij}}(t)
	\]
	has nonnegative coefficients.
\end{conjecture}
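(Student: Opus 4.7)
The strategy will be to combine the triangulation-theoretic formula of \Cref{lem:diff_hstar} with the summation over edges, and then study the resulting polynomial in the $\gamma$-basis. I would begin by substituting the formula for $c^{ij}(t):=h^*_{P_G}(t)-h^*_{P_{G\setminus ij}}(t)$ into $Z_G(t)=\sum_{ij\in E}\gamma(c^{ij}(t))$ and splitting the result into a shallow contribution coming from the summand $2t\,h_{\Gamma^{ij}}(t)$, whose coefficients are manifestly nonnegative for each edge, and a cyclic contribution coming from the faces $F\in\Delta^{ij}$ whose oriented subgraph $G(F)$ is a cycle with $a_F-b_F\geq 3$. The palindromy of each $c^{ij}(t)$, together with the explicit shape of the box polynomials $t^{b_F+2}+\cdots+t^{a_F-1}$ guaranteed by \Cref{lem:box_pol}, should allow one to set up the $\gamma$-transform cleanly.

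The low-order coefficients are handled independently and are already understood. The constant term $z_0$ vanishes because $\gamma_0(h^*_{P_G})=1$ for every connected graph. The linear coefficient equals $z_1=2|E|$: this follows at once from the identity $\gamma_1(h^*_{P_G})=2\cy(G)$ together with $\cy(G)-\cy(G\setminus ij)=1$, which holds for every edge of a $2$-connected graph. The quadratic coefficient $z_2$ is nonnegative by \Cref{thm: C}, which in turn is a direct consequence of the edge-count bound \Cref{thm: A}.

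For the higher coefficients $z_k$ with $k\geq 3$, the plan is to reindex the double sum $\sum_{ij\in E}\sum_F(\cdots)$ by pairs $(C,e)$, where $C$ is an unoriented cycle of $G$ of length at least $4$ and $e\in E(C)$ is the distinguished deleted edge, using the natural correspondence between simplices $F$ in $\Delta^{ij}$ with $G(F)$ cyclic and such pairs. The key intermediate goal is to show that, for each fixed cycle $C$, summing over the $|E(C)|$ choices of distinguished edge produces a $\gamma$-nonnegative polynomial; combined with the manifestly nonnegative shallow contribution, this would yield the conjecture by linearity of the $\gamma$-transform. Within each group I would rewrite $t^{b_F+2}+\cdots+t^{a_F-1}$ as a sum of the symmetric building blocks $t^i+t^{a_F+b_F+1-i}$, which expand uniformly in the $\gamma$-basis and interact well with the palindromy of $h_{\lk_{\Delta^{ij}}(F)}(t)\,c^{ij}(t)$.

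The main obstacle is that the link polynomials $h_{\lk_{\Delta^{ij}}(F)}(t)$ are not a priori $\gamma$-nonnegative and depend both on the choice of triangulation $\Gamma^{ij}$ and on the ambient graph structure in a way that is difficult to control uniformly across edges. I expect no term-by-term argument to suffice; instead, a more global identity or a symmetry group action on the combinatorial data indexing the sum will be needed, in the spirit of the valley/descent involutions used to establish $\gamma$-nonnegativity for Eulerian polynomials. Constructing such an involution, pairing up contributions of opposite sign in the $\gamma$-basis while respecting the cyclic structure of $C$, appears to be the most delicate and novel ingredient. A promising simpler starting point is to verify the scheme for $k=3$, where the combinatorics of $4$-cycles and their interaction with $3$-cycles (already exploited in \Cref{sec:z_2(G)}) should produce a tractable analogue of the $E_3(G)$-correction appearing in \Cref{thm: C}.
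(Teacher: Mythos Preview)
The statement you are attempting to prove is \emph{a conjecture in the paper}, not a theorem: the authors explicitly leave \Cref{conj: zG nonnegative} open and only report computer verification for all $2$-connected graphs on at most $8$ vertices. The paper establishes nonnegativity only for the coefficients $z_0$, $z_1$, and $z_2$ (the last via \Cref{thm: C} and the edge-count bound \Cref{thm: A}), exactly as you summarize in your second paragraph. So for the low-order part your treatment matches the paper precisely, and there is nothing further to compare against for $k\geq 3$.

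Your proposal for the higher coefficients is not a proof but an outline with acknowledged gaps, and I want to flag two concrete obstructions to the strategy as stated. First, the reindexing step ``simplices $F\in\Delta^{ij}$ with $G(F)$ a cycle'' $\leftrightarrow$ ``pairs $(C,e)$ with $e\in E(C)$'' is not a bijection: for a fixed cycle $C$ and fixed deleted edge $e=ij$, the oriented path $C\setminus e$ may fail to be a face of the particular HJM triangulation $\Gamma^{ij}$ (this depends on the chosen edge order), and when it is a face, several orientations of $C\setminus e$ with different values of $(a_F,b_F)$ can occur simultaneously as faces. So the grouping by unoriented cycles does not cleanly partition the sum. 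Second, even granting such a grouping, the link polynomials $h_{\lk_{\Delta^{ij}}(F)}(t)$ are not intrinsic to the pair $(C,e)$: they depend on the ambient graph and on the triangulation, and there is no reason for the sum over $e\in E(C)$ of the resulting contributions to be $\gamma$-nonnegative cycle by cycle. Indeed, \Cref{ex:negative-gamma} already shows that a single $c^{ij}(t)$ can fail to be $\gamma$-nonnegative, and the cancellation that rescues $Z_G(t)$ there happens between edges lying on \emph{different} cycles, not within a single cycle. Any involution you construct will therefore have to mix contributions across cycles, which undermines the ``fix $C$ and sum over $e$'' decomposition you propose.
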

In particular, this conjecture implies \Cref{conj: OT}. We have checked the validity of \Cref{conj: zG nonnegative} for all $2$-connected graphs with up to $8$ vertices using a computer.\\

\begin{example}
	Let $G=K_n$, for some $n\geq 3$. In this case $G\setminus ij$ is, up to isomorphism, independent on the choice of $ij \in E$. We can determine the facets of $P_{G\setminus ij}$ which are visible from $e_{ij}$. By \Cref{thm:facetchar} facets of $P_{G\setminus ij}$ correspond to certain maps $f:[n]\to\mathbb{Z}$. The ones visible from $e_{ij}$ are precisely the maps in \Cref{thm:facetchar} for which $f(j)-f(i)>1$. For $G=K_n$, by condition $(i)$ in \Cref{thm:facetchar} this is equivalent to $f(j)-f(i)=2$, which in turn implies that $f(j)-f(k)=f(k)-f(i)=1$ for every $k\in [n]\setminus{i,j}$. In particular there is, up to translation, only one map satisfying the conditions in \Cref{thm:facetchar}, and hence there is a unique facet $F$ of $P_{G\setminus ij}$ visible from $e_{ij}$, $F=\text{conv}\{ e_i-e_k,e_k-e_j~:~ k\in[n]\setminus\{i,j\}\}$. 
	In order to compute $h^*_{P_G}(t)-h^*_{P_{G\setminus ij}}(t)$,
	we use a HJM triangulation of $F$ described in \Cref{thm:HJMtriang}. Fix an order on the edges of $G\setminus ij$ and assume that $iv$ is the minimum among all the edges of the form $ik$ and $kj$ with $k\in [n]\setminus\{i,j\}\}$. The case in which the minimal edge is of the form $vj$ is completely analogous. The maximal simplices in the corresponding unimodular triangulation $\Gamma^{ij}$ of $F$ are the convex hull of subsets $S\subseteq \{ e_i-e_k,e_k-e_j~:~ k\in[n]\setminus\{i,j\}\}$ such that:
	\begin{itemize}
		\item[-] $e_i-e_v\in S$ and $e_v-e_j\in S$;
		\item[-] exactly one between $e_i-e_k$ and $e_k-e_j$ is in $S$, for every $k\in  [n]\setminus\{i,j,v\}$.
	\end{itemize}
	In particular, $\Gamma^{ij}$ is combinatorially equivalent to the join of a $(n-4)$-dimensional cross-polytope and a $1$-simplex. Hence $h_{\Gamma^{ij}}(t)=(1+t)^{n-3}$. By \Cref{lem:diff_hstar} we have that 
	\[
	h^*_{P_G}(t)-h^*_{P_{G\setminus ij}}(t) = 2th_{\Gamma^{ij}}(t)=2t(1+t)^{n-3}.
	\]
	Since $\gamma(2t(1+t)^{n-3})=2t$, we conclude that $Z_{K_n}(t)=\binom{n}{2}2t=n(n-1)t$.
\end{example}
\begin{example}
	Let $G=C_n$, be the $n$-cycle for some $n\geq 3$. Then its $h^*$ is computed in \cite{OT21}.
	Deleting any edge yields a tree on $n$ vertices, whose symmetric edge polytope is unimodularly equivalent to the standard cross polytope $\Diamond_{n-1}$, and therefore its $h^*$-polynomial is $h^*_{\Diamond_{n-1}}=(t+1)^{n-1}$. 
	
	We thus obtain 
	\begin{align*}
		Z_{C_n}(t)=& \,
	n(h^*_{P_{C_n}}(t)-h^*_{\Diamond_{n-1}}(t))\\
	=& \, n\sum_{i=1}^{\lfloor\frac{n-1}{2}\rfloor} \binom{2i}{i}t^i(t+1)^{n-1-2i},
	\end{align*}
	which is $\gamma$-nonnegative.
\end{example}

We now want to show how one can use \Cref{lem:diff_hstar} to give explicit expressions for the first coefficients of $c^{ij}=\sum_{0}^d a_i t^i$, and then combine these to give expressions for the first coefficients of $Z_G$. We easily see that $a_0=0$ and $a_1=2$. With a bit more work we can find $a_2$. To do so, we need intermediate results about the triangulation $\Gamma^{ij}$.

\begin{proposition} \label{lem:minimalpath}
	Let $G=([n],E)$ be a 2-connected graph. Let $\gamma$ be one of the shortest paths in $G$ from $i$ to $j$, and let $\Gamma$ be the HJM triangulation induced by an order $<$ for which the edges of $\gamma$ are smaller than any edge not in $\gamma$ Then, any induced oriented path $\vec{\gamma}$ is in  $\Gamma$.
\end{proposition}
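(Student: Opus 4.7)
The plan is to show that the monomial $M_{\vec\gamma} := \prod_{e \in \gamma} p_e$, where $p_e$ is the variable corresponding to the orientation of $e$ in $\vec\gamma$, does not lie in the initial ideal $\mathrm{in}_<(I_{P_G})$. By \Cref{thm:HJMtriang} this is equivalent to showing that none of the three types of leading monomials of the Gröbner basis divides $M_{\vec\gamma}$. Type (3) monomials $x_e y_e$ are ruled out immediately, since $M_{\vec\gamma}$ contains each variable with multiplicity at most one.

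For types (1) and (2), fix any cycle $C$ in $G$ of length $L$ with a chosen orientation $O$ and decompose $\gamma \cap C$ into its maximal sub-paths $P_1,\ldots,P_r$ of $\gamma$ (each of which is simultaneously an arc of $C$). Each $P_s$ is either \emph{matching} (its orientation in $\vec\gamma$ coincides with $O$) or \emph{mismatching}; let $A$ (resp.\ $B$) be the total length of matching (resp.\ mismatching) sub-paths. Any subset $I\subseteq C$ whose leading monomial $\prod_{e \in I} p_e$ divides $M_{\vec\gamma}$ must be contained in the matching set $\mathcal{M}\subseteq \gamma\cap C$, which has size $A$.

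The central estimate is $A \leq \lfloor L/2 \rfloor$, proved by a cyclic sum argument: traversing $C$ once in direction $O$, matching sub-paths contribute $+|P_s|$ and mismatching ones $-|P_s|$ to the net $\gamma$-position change, while each gap (an arc of $C$ not in $\gamma$ with endpoints on $\gamma$) contributes a signed displacement $c_s$. Since $\gamma$ is a shortest path, the distance within $\gamma$ between any two of its vertices equals their distance in $G$, so $|c_s|$ is bounded above by the gap length. Summing around $C$ gives $\sum c_s = B - A$ and the triangle inequality yields $|B - A| \leq L - A - B$, hence $A \leq L/2$. For type (2) cycles $L = 2k+1$ this gives $A \leq k < k+1 = |I|$, so no valid $I$ exists.

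For type (1) cycles $L = 2k$ the bound $A \leq k$ can be tight, so I would analyze the equality case. When $A = k$, all inequalities in the cyclic argument must be tight: every gap is a geodesic in $G$ (its length equals $|c_s|$) and all $c_s$ share the same (negative) sign. A four-case analysis on the type of each consecutive pair $(P_s, P_{s+1})$ (matching/matching, matching/mismatching, etc.) shows that the interval of $\gamma$-positions occupied by $P_{s+1}$ lies strictly to the left of that of $P_s$; iterating this cyclically contradicts the closure condition unless $r = 1$. Therefore $\gamma \cap C$ reduces to a single matching sub-path of length $k$ and $\mathcal{M} = \gamma \cap C$. By the ordering hypothesis, the smallest edge $e^*$ of $C$ lies in $\gamma$, hence in $\mathcal{M}$; but any $k$-element subset $I \subseteq \mathcal{M}$ equals $\mathcal{M}$ and thus contains $e^*$, violating the type (1) requirement that $I$ avoids the smallest edge. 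The main obstacle is precisely this case analysis ruling out $r \geq 2$ when $A = k$: it is the geometric core of the argument and the place where the shortest-path property and the ordering hypothesis interact to prevent any leading monomial from dividing $M_{\vec\gamma}$.
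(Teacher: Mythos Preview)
Your approach is correct and reaches the same conclusion as the paper's, but via a genuinely more elaborate route. The paper bypasses your cyclic-sum argument and the four-case analysis entirely by choosing the extremal vertices $v_1, v_2$ (the first and last vertices of $\gamma$ lying on the cycle $C_S$) and splitting $C_S$ into the two arcs $\tau_1, \tau_2$ between them. Since $\gamma$ is a shortest path, each arc satisfies $\ell(\tau_i) \geq \ell(\gamma_{|v_1,v_2}) \geq |S|$, yielding $|C_S| \geq 2|S|$ in one line; equality then forces $S = \gamma_{|v_1,v_2}$, which is automatically a single arc of $C_S$, so your conclusion $r = 1$ drops out for free without any case distinction. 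The ordering hypothesis is then invoked exactly as you do: the smallest edge of $C_S$ must lie in $\gamma$, hence in $S$, contradicting the type~(1) requirement. Your argument gives a finer picture of how $\gamma$ threads through $C$ (the matching/mismatching decomposition and the gap-geodesic observation could be useful for sharper statements), but for this proposition the extremal-vertex trick is shorter and sidesteps precisely the case analysis you flagged as the main obstacle.
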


\begin{proof}
	We have to prove that $\prod_{e \in \vec{\gamma}} p_e \not \in \text{in}_< (I_{P_G})$.
	Let $k$ be the length of $\gamma$. There cannot be a $2k$-cycle or a $(2k-1)$-cycle that contains $\gamma$ which produces a binomial of type (1) or (2) in Proposition \ref{thm:HJMtriang} with leading term $\prod_{e \in \vec{\gamma}} p_e$, because of the minimality of $\gamma$.
	
	Suppose now that $\prod_{e \in \vec{\gamma}} p_e$ is the multiple of a generator of $\text{in}_< (I_{P_G})$. By Proposition \ref{thm:HJMtriang}, there exists a subset $S$ of edges of $\vec{\gamma}$ such that $\prod_{e \in S} p_e \in \text{in}_< (I_{P_G}) $. 
	
	Let $v_1$ and $v_2$ be respectively the first and the last vertices in $\gamma$ which are also in $C_S$, then consider the two paths $\tau_1$, $\tau_2$ from $v_1$ to $v_2$ that form $C_S$. By the minimality of $\gamma$, $\ell(\tau_i) \geq \ell(\gamma_{|v_1,v_2})$ for $i=1,2$, and since $S \subseteq \gamma_{|v_1,v_2}$ we have that $|C_S|= \ell(\tau_1) + \ell(\tau_2)  \geq 2|S|$, hence we are in the case (1) of Proposition \ref{thm:HJMtriang}, and this implies that
	$\ell(\tau_1)= \ell(\tau_2)= \ell(\gamma_{|v_1,v_2})$ and $S=\gamma_{|v_1,v_2}$.

	Let $l$ be the minimal edge that appears in $C_S$ and suppose without loss of generality that $l \in \tau_1$; by the hypothesis $l \not \in S$. 
	Now the path that goes from $i$ to $v_1$ by $\gamma$, from $v_1$ to $v_2$ by $\tau_1$ and from $v_2$ to $j$ by $\gamma$ would be less than $\gamma$, in contrast with the assumptions on the minimality of $\gamma$.
\end{proof}

We now focus on paths oriented entirely from $i$ to $j$.

\begin{lemma}\label{b_F=0}
    Let $\Gamma^{ij}$ be the unimodular triangulation of the facets of $P_{G\sm ij}$ visible from $e_{ij}$. Then there is a unique simplex $F \in \Gamma^{ij}$ which corresponds to a path between $i$ and $j$ completely oriented against $\vec{ij}$, that is, $b_F=0$.
\end{lemma}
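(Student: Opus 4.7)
The plan is to prove existence via a shortest-path construction, and uniqueness via a short Gröbner basis argument based on \Cref{thm:HJMtriang}. Let $d\geq 2$ denote the length of a shortest path from $i$ to $j$ in $G\sm ij$.

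For existence, I will take $F_0$ to be the simplex of the HJM triangulation corresponding to the unique standard monomial (in the degrevlex order of \Cref{thm:HJMtriang}) representing $e_j-e_i$ at height $d$ in the semigroup algebra of $P_{G\sm ij}$. Minimality of $d$ forces every monomial in this graded component to have no $z$-factor (else one could drop the $z$ factors and obtain a representative at a lower height) and to be squarefree: indeed, any degree-$d$ monomial representing $e_j-e_i$ corresponds to an oriented walk of length $d$ from $i$ to $j$, and deleting a repetition of an oriented edge from such a walk would produce a strictly shorter $i$-$j$ walk, contradicting the choice of $d$. Consequently $F_0$ is a simplex of the HJM triangulation whose oriented subgraph $\vec\gamma_0$ is a shortest $i$-$j$ path with every edge oriented from $i$ toward $j$, so $b_{F_0}=0$. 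To verify $F_0\in\Gamma^{ij}$, I will set $f(v):=d_{G\sm ij}(i,v)$: the triangle inequality yields $|f(u)-f(v)|\leq 1$ on every edge of $G\sm ij$, so $f$ is facet-defining by \Cref{thm:facetchar}; the facet $H_f$ is visible from $e_{ij}$ since $f(j)-f(i)=d>1$; and every oriented edge $\vec{vw}$ of $\vec\gamma_0$ satisfies $f(w)-f(v)=1$, so $F_0\subseteq H_f$.

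For uniqueness, suppose $F_1,F_2\in\Gamma^{ij}$ both correspond to oriented $i$-$j$ paths $P_1,P_2$ of lengths $l_1\leq l_2$ with $b_{F_1}=b_{F_2}=0$, and denote their (squarefree) monomials by $m_1,m_2$. Telescoping shows that both $m_k$ represent $e_j-e_i$, so the binomial $m_1\cdot z^{l_2-l_1}-m_2$ lies in $I_{P_{G\sm ij}}$. If it is nonzero, its degrevlex-leading monomial must belong to $\text{in}_<(I_{P_{G\sm ij}})$: if that leading monomial is $m_2$, then $F_2$ cannot be a simplex of the HJM triangulation, contradicting $F_2\in\Gamma^{ij}$; if it is $m_1\cdot z^{l_2-l_1}$, the observation recorded just after \Cref{thm:HJMtriang} that $m\in\text{in}_<(I_{P_{G\sm ij}})$ if and only if $z\cdot m\in\text{in}_<(I_{P_{G\sm ij}})$ gives $m_1\in\text{in}_<(I_{P_{G\sm ij}})$, again a contradiction. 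Therefore the binomial vanishes, forcing $l_1=l_2$ and $m_1=m_2$, so $P_1=P_2$ as oriented paths and $F_1=F_2$.

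The most delicate step is the squarefreeness argument for the standard monomial at height $d$, which is where the minimality of $d$ enters essentially; once that is in place, both halves of the statement fit neatly into the Gröbner basis framework of \Cref{thm:HJMtriang} combined with the $z$-absorption property recorded immediately after it.
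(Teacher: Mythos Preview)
Your proof is correct and takes a genuinely different route from the paper. For existence the paper relies on \Cref{lem:minimalpath}, which requires choosing the HJM order so that a fixed shortest $i$--$j$ path carries the smallest edges; you instead pick the unique standard monomial representing $(e_j-e_i,d)$, which works for \emph{any} HJM order and explains intrinsically why the resulting face is a shortest path. (Two details worth tightening: the multiset of oriented edges in a $z$-free degree-$d$ monomial for $e_j-e_i$ forms a walk only after you observe that minimality of $d$ forces connectedness of the underlying multigraph---any component not meeting $\{i,j\}$ would sum to zero and could be dropped; and to conclude that your distance function $f$ is facet-defining you should also verify the spanning condition (ii) of \Cref{thm:facetchar}, which holds since every $v\neq i$ is hit by the last edge of a shortest path and $i$ has a neighbour in $G\sm ij$ by $2$-connectedness.) For uniqueness the paper argues combinatorially, locating the first divergence of two candidate paths and extracting an explicit minimal non-face from the resulting cycle via \Cref{thm:HJMtriang}; your argument is purely algebraic---the binomial $m_1 z^{l_2-l_1}-m_2$ lies in the toric ideal, so its leading term lies in the initial ideal, and since that ideal is generated in $z$-free degree this forces one of $m_1,m_2$ to be a non-face. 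The paper's approach pinpoints the offending non-face, while yours is slicker and avoids any case analysis on how the two paths overlap.
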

\begin{proof}
	Existence follows from \Cref{lem:minimalpath}, which guarantees that the shortest path between $i$ and $j$ oriented from $i$ to $j$ corresponds to a face in the HJM triangulation of $P_{G\sm ij}$; this face is in a visible facet because the path is of length at least $2$ and oriented correctly. We now prove uniqueness of such a face. Suppose to the contrary that there were two different paths oriented from $i$ to $j$ which correspond to faces of $\Gamma^{ij}$. Let $i_1$ and $i_2$ respectively be the first vertex where the paths diverge and where they first meet again, and call $\vec{P_1}$ and $\vec{P_2}$ the restriction of those oriented paths between $i_1$ and $i_2$. Suppose that $\vec{P_1}$ is at most as long as $\vec{P_2}$, and consider the oriented cycle $C$ obtained as the union of $\vec{P_2}$ and the edges of $\vec{P_1}$ oriented inversely. By \Cref{thm:HJMtriang}, if we choose as $I$ any subset of the oriented edges of $\vec{P_2}$ of size $\lfloor|P_1 \cup P_2|/2\rfloor$, the set $I$ forms a non-face of $\Gamma^{ij}$, and therefore so does the original oriented path it is a subset of, which yields a contradiction. 
\end{proof}

We can now give an explicit formula for the quadratic coefficient of $c^{ij}(t)$
\begin{lemma}
Let $G$ be a $2$-connected graph with $n$ vertices and $ij$ an edge in $G$. Then the coefficient of $t^2$ in $c^{ij}(t)$ is
    \begin{equation}
        a_2 =
        \begin{dcases}
          2(f_0(\Gamma^{ij})-n+1),& \text{ if shortest path in } G\setminus ij \text{ has length }2, \\
          2(f_0(\Gamma^{ij})-n+2), & \text{ if shortest path in } G\setminus ij \text{ has length }\geq 3.
        \end{dcases}\\
    \end{equation}
\end{lemma}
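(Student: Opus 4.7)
The plan is to read off $a_2 = [t^2]c^{ij}(t)$ directly from the expression in \Cref{lem:diff_hstar}, namely
\[
c^{ij}(t) = 2t\, h_{\Gamma^{ij}}(t) + 2\sum_{\substack{F \in \Delta^{ij}\\ G(F)\ \text{cycle},\ a_F-b_F\geq 3}} h_{\lk_{\Delta^{ij}}(F)}(t)\bigl(t^{b_F+2}+\cdots+t^{a_F-1}\bigr),
\]
computing the contribution of each of the two summands to the coefficient of $t^2$ separately.

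For the first summand, I would use that $G$ being $2$-connected implies $\dim P_{G\setminus ij} = n-1$, so every facet of $P_{G\setminus ij}$ and hence the triangulation $\Gamma^{ij}$ is $(n-2)$-dimensional. The standard identity $h_1 = f_0-(d+1)$ for the $h$-polynomial of a $d$-dimensional simplicial complex then yields
\[
[t^2]\bigl(2t\, h_{\Gamma^{ij}}(t)\bigr) = 2h_1(\Gamma^{ij}) = 2\bigl(f_0(\Gamma^{ij}) - (n-1)\bigr).
\]

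For the second summand, each factor $h_{\lk_{\Delta^{ij}}(F)}(t)\bigl(t^{b_F+2}+\cdots+t^{a_F-1}\bigr)$ has minimum degree $b_F+2 \geq 2$, so its $[t^2]$-coefficient vanishes unless $b_F=0$, in which case it equals $h_0(\lk_{\Delta^{ij}}(F))=1$. Hence only faces with $b_F=0$ and $a_F \geq 3$ contribute, each adding exactly $2$ to $a_2$. By \Cref{b_F=0} there is a unique $F \in \Delta^{ij}$ whose associated subgraph is a cycle with $b_F=0$, and it arises from a shortest oriented $i$-$j$ path in $G\setminus ij$, so $a_F$ equals the length of a shortest $i$-$j$ path in $G\setminus ij$. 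Splitting on whether that length is $2$ (in which case $a_F=2<3$ and nothing is added) or $\geq 3$ (adding $2$) yields the two cases in the statement.

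The only delicate point I anticipate is keeping the definitions of $a_F$, $b_F$ and the cycle condition $a_F - b_F \geq 3$ straight, so as to confirm that the length-$2$ case is precisely the one excluded from the second sum; beyond that, the argument is a direct book-keeping from \Cref{lem:diff_hstar} together with \Cref{b_F=0}.
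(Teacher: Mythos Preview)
Your proposal is correct and follows essentially the same approach as the paper: both extract the $t^2$-coefficient from \Cref{lem:diff_hstar}, use $h_1(\Gamma^{ij})=f_0(\Gamma^{ij})-(n-1)$ for the first summand, and invoke \Cref{b_F=0} to reduce the second summand to a single term depending on whether the shortest $i$--$j$ path in $G\setminus ij$ has length $2$ or at least $3$. Your treatment is in fact a bit more explicit than the paper's in justifying why only $b_F=0$ contributes and why $h_0(\lk_{\Delta^{ij}}(F))=1$.
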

\begin{proof}
    We use the formula from \Cref{lem:box_pol} and analyze the quadratic term to obtain the following expression: 
    \[a_2= 2h_1(\Gamma^{ij}) +2\sum_{\substack{F \in \Delta^{ij} \\ G(F) \text{ cycle with } \\ b_F=0,\, a_F \geq 3}} 1 \]

    Thanks to \Cref{b_F=0}, we know that there is a unique $F \in \Delta^{ij}$ with $b_F=0$. We then see that the sum in the expression above is empty if the unique facet $F$ of $\Gamma^{ij}$ with $b_F=0$ satisfies $a_F <3$ and $1$ otherwise. We conclude by observing that $h_1(\Gamma^{ij})=f_0(\Gamma^{ij})-n+1$.
\end{proof}

Because $c^{ij}(t)$ is a palindromic polynomial, we can rewrite it in the $\gamma$-basis; let $\gamma_0, \gamma_1, \dots$ be the coefficients in this basis. Then knowing the three coefficients  $a_0, a_1$ and $a_2$ of the polynomial in the standard monomial basis allows us to derive a formula for $\gamma_0, \gamma_1$ and more interestingly for $\gamma_2$. . 

\begin{equation}
    \begin{aligned}
        \gamma_2 = & a_2 - (a_1 - (n-1)a_0)(n-3) \\
        = & 
            \begin{dcases}
              2(f_0(\Gamma^{ij})-2n+4) ,& \text{ if shortest path in } G\setminus ij \text{ has length }2, \\
              2(f_0(\Gamma^{ij})-2n+5), & \text{ if shortest path in } G\setminus ij \text{ has length }\geq 3.
            \end{dcases}\\
    \end{aligned}
\end{equation}

The next lemma gives the key to interpreting these numbers.

\begin{lemma}
    Let $G$, $e_{ij}$ and $\Gamma^{ij}$ be defined as above. 
    Then $f_0(\Gamma^{ij})$ is equal to the number of neighbors of $e_{ij}$ in $P_G$, the set of which we denote by $N_{P_G}(e_{ij})$.
\end{lemma}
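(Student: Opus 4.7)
The plan is to identify $V(\Gamma^{ij})$ and $N_{P_G}(e_{ij})$ as the same subset of the vertices of $P_G$, from which the cardinality equality is immediate. Since $\Gamma^{ij}$ is a unimodular triangulation of the union of the facets of $P_{G\setminus ij}$ visible from $e_{ij}$ using only the existing lattice points of those facets, we have
\[
V(\Gamma^{ij}) = V_{\mathrm{vis}} := \{w \in \mathrm{vert}(P_{G\setminus ij}) : w \text{ lies on some facet of } P_{G\setminus ij} \text{ visible from } e_{ij}\}.
\]
Moreover, the other ``new'' vertex $e_{ji}$ of $P_G$ is not adjacent to $e_{ij}$: the segment $[e_{ij}, e_{ji}]$ passes through the origin, which lies in the interior of the reflexive polytope $P_G$. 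Hence $N_{P_G}(e_{ij}) \subseteq \mathrm{vert}(P_{G\setminus ij})$, and the task reduces to proving the set equality $V_{\mathrm{vis}} = N_{P_G}(e_{ij})$.

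For the inclusion $V_{\mathrm{vis}} \subseteq N_{P_G}(e_{ij})$, the special feature of SEPs is that every $\pm e_{mn}$ with $mn \in E$ is a vertex of $P_G$ (all such points lie on the sphere $\|x\|_2 = \sqrt{2}$ and any strict convex combination has strictly smaller norm). In particular, no $w \in V_{\mathrm{vis}}$ gets absorbed upon adding $e_{ij}$, so a standard tangent-cone argument precludes all facets of $P_{G\setminus ij}$ through $w$ from being visible from $e_{ij}$: if they all were, then $e_{ij}-w$ would lie strictly inside the outward cone at $w$, yielding a representation $w = (1-t)e_{ij} + tu$ with $u$ in the interior of $P_{G\setminus ij}$ and $t \in (0,1)$, contradicting that $w$ is a vertex of $P_G$. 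Consequently each $w \in V_{\mathrm{vis}}$ lies on both a visible and a non-visible facet of $P_{G\setminus ij}$, and walking between them in the facet-adjacency graph at $w$ produces a horizon ridge $R$ containing $w$. The horizon pyramid $\mathrm{conv}(R \cup \{e_{ij}\})$ is then a facet of $P_G$ having $[e_{ij}, w]$ as a $1$-face (the edge from the apex to a base vertex), and since faces of faces of a polytope are faces, $[e_{ij}, w]$ is an edge of $P_G$ and $w \in N_{P_G}(e_{ij})$.

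For the reverse inclusion $N_{P_G}(e_{ij}) \subseteq V_{\mathrm{vis}}$, any $w \in N_{P_G}(e_{ij})$ gives an edge $[e_{ij}, w]$ of $P_G$ lying in some facet $F^*$ of $P_G$ containing $e_{ij}$. Such an $F^*$ is either a horizon pyramid $\mathrm{conv}(R \cup \{e_{ij}\})$ as above, in which case $w$ is a vertex of $R$ and lies on the visible facet bounding $R$, or an extension $\mathrm{conv}(F \cup \{e_{ij}\})$ of a ``tangent'' facet $F$ of $P_{G\setminus ij}$ whose defining hyperplane passes through $e_{ij}$; in the latter case the fact that $[e_{ij}, w]$ is a $1$-face of $F^*$ forces $w$ to be visible from $e_{ij}$ within the affine hull of $F$, and a local analysis at $w$ shows that the ridge of $F$ witnessing this visibility is shared with a genuinely visible facet of $P_{G\setminus ij}$, again giving $w \in V_{\mathrm{vis}}$.

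The main obstacle I anticipate is making the ``strictly visible implies absorbed'' tangent-cone argument fully rigorous and carefully handling the boundary case of facets of $P_{G\setminus ij}$ whose defining hyperplane passes through $e_{ij}$ in the second inclusion. Once these are in place, the rest of the proof reduces to standard combinatorics of convex hulls under the addition of a single external vertex, applied to the pair $P_{G\setminus ij} \subset P_G$.
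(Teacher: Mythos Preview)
Your approach is essentially the same as the paper's: identify $V(\Gamma^{ij})$ with the vertices of $P_{G\setminus ij}$ lying on facets visible from $e_{ij}$, and then argue that this set coincides with $N_{P_G}(e_{ij})$. The paper's proof, however, is much shorter. It inserts an intermediate step, asserting (without further justification) that the vertices on visible facets are exactly the ``visible vertices'' of $P_{G\setminus ij}$, meaning those $v$ for which the open segment $(e_{ij},v)$ misses the interior of $P_{G\setminus ij}$; it then argues in two lines that visible vertices are precisely the neighbours of $e_{ij}$. Your proof bypasses that intermediate notion and instead uses the horizon--ridge/pyramid description of the facets of $P_G$ through $e_{ij}$, which is more explicit but correspondingly longer.

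Both arguments are somewhat informal. Your explicit worry about facets of $P_{G\setminus ij}$ whose supporting hyperplane passes through $e_{ij}$ is legitimate and is a detail the paper does not address either. One point to be careful about in your first inclusion: when the facet $F'$ of $P_G$ containing $e_{ij}$ comes from such a ``tangent'' facet $F$ of $P_{G\setminus ij}$, the polytope $F'=\conv(F\cup\{e_{ij}\})$ is \emph{not} a pyramid over $F$ in general (already for $G=C_4$ it is a square, not a tetrahedron), so $e_{ij}$ is not adjacent to every vertex of $F$ inside $F'$. Fortunately this only matters for the reverse inclusion $N_{P_G}(e_{ij})\subseteq V_{\mathrm{vis}}$, where your ``local analysis at $w$'' must show that the two vertices of $F$ that \emph{are} adjacent to $e_{ij}$ in $F'$ already lie on a strictly visible facet; this is true but is exactly the step that needs to be written out.
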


\begin{proof}
Recall that the only lattice points in $P_G$ are its vertices and the origin. Thus the vertices of $\Gamma^{ij}$ are simply the vertices of facets of $P_{G \sm ij}$ visible from $e_{ij}$, which are simply the vertices of $P_{G \sm ij}$ visible from $e_{ij}$. To see that these are just the neighbors of $e_{ij}$ in $P_G$, first observe that neighbors of $e_{ij}$ in $P_G$ are certainly visible visible in $P_{G\sm ij}$ from $e_{ij}$, since being visible means that the segment between $v$ and $e_{ij}$ is not contained in the interior of $P_{G\sm ij}$, which is certainly true for an edge of $P_G$. For the converse inclusion, observe that if $v$ is a vertex of $P_{G \sm ij}$ visible from $e_{ij}$, and thus the segment $S$ between $v$ and $e_{ij}$ is outside $P_{G \sm ij}$, if $S$ were not an edge of $P_G$, then we must have that $v$ is in the convex hull of $e_{ij}$ and certain other vertices of $P_G$, but this is not possible, since it is a vertex of $P_G$.
\end{proof}

Let $Z_G(t)=\sum_{i}z_it^i$. It is easy to verify that $z_0=0$ and $z_1=2|E|$. We now show that the quadratic coefficient $z_2$ of $Z_G(t)$ is tightly related with the bound given in \Cref{sec:z_2(G)}. Indeed we have that

\begin{equation}\label{eq: z2 is z2}
\begin{aligned}
    z_2=\sum_{ij \in E} \gamma_2(c^{ij}(t)) = & \sum_{ij \in E} |N_{P_G}(e_{ij})| - 2|E|(2n-5) -2|E_3| \\
    = & 2f_1(P_G) - 2|E|(2n-5) -2|E_3| = z_2(G),
\end{aligned}
\end{equation}
where $E_3 \subseteq E$ is the set of all edges of $G$ which are contained in a cycle, and $z_2(G)$ the quantity defined in \eqref{def: z_2}. The second equality is thanks to a simple double counting, since each vertex of each edge of $P_G$ is counted once, as the neighbor of the other vertex.
 
\Cref{thm:edges_SEP_ineq} then directly implies the following result, which was originally proved in \cite[Theorem 3.2]{AJKKV23}.

\begin{theorem}
    Let $G=(V,E)$ be a $2$-connected graph. Then there exists an edge $ij\in E$ such that $\gamma_2(h^*_{P_G}) - \gamma_2(h^*_{P_{G\setminus ij}}) \geq 0$.
\end{theorem}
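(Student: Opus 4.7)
The plan is to derive this as an immediate consequence of \Cref{thm:edges_SEP_ineq} via the identity recorded in \eqref{eq: z2 is z2}, together with an elementary averaging argument.

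First, I would invoke the formula \eqref{eq: z2 is z2}, which expresses the quadratic coefficient $z_2$ of the polynomial $Z_G(t)$ as
\[
z_2 \;=\; \sum_{ij\in E} \gamma_2\bigl(c^{ij}(t)\bigr) \;=\; \sum_{ij\in E}\bigl(\gamma_2(h^*_{P_G}) - \gamma_2(h^*_{P_{G\setminus ij}})\bigr),
\]
while simultaneously identifying this sum with the graph-theoretic quantity $2f_1(P_G) + 2|E|(2n-5) - 2|E_3(G)|$ (via the double-counting of neighbours of vertices $e_{ij}$ in $P_G$ already carried out above).

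Second, I would apply \Cref{thm:edges_SEP_ineq} to the latter expression. The inequality $f_1(P_G) \geq |E|(2|V|-5) - |E_3(G)|$ is exactly equivalent to the nonnegativity of $z_2$ as expressed above.

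Finally, a trivial pigeonhole step finishes the argument: if a sum of finitely many real numbers is nonnegative, at least one of the summands must itself be nonnegative. Since the sum in question ranges over edges $ij\in E$, there exists at least one edge $ij$ for which $\gamma_2(h^*_{P_G}) - \gamma_2(h^*_{P_{G\setminus ij}}) \geq 0$, as claimed. There is no genuine obstacle at this stage; all the substantive work sits in \Cref{thm:edges_SEP_ineq} (the combinatorial lower bound on $f_1(P_G)$) and in the Ehrhart-theoretic identification \eqref{eq: z2 is z2}, which is itself obtained through the Betke--McMullen decomposition of \Cref{lem:diff_hstar} and the box-polynomial computation of \Cref{lem:box_pol}.
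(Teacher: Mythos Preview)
Your proposal is correct and matches the paper's own argument exactly: the paper states that \Cref{thm:edges_SEP_ineq} ``directly implies'' this theorem via the identity \eqref{eq: z2 is z2}, and your averaging (pigeonhole) step is the implicit final move. One small remark: the expression you quote from \eqref{eq: z2 is z2} carries the paper's sign typo---the middle term should be $-2|E|(2n-5)$, not $+2|E|(2n-5)$, for the identification with $z_2(G)$ (up to a factor of $2$) and the application of \Cref{thm:edges_SEP_ineq} to go through.
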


\section{Further directions}
\label{sec:conjectured_formula}

The formula in \Cref{lem:diff_hstar} allows us to write $h^*_{P_G}(t) - h^*_{P_{G\setminus ij}}(t)$ as the sum of polynomials, all of which have nonnegative coefficients. Unfortunately, these summands are not palindromic, so this decomposition does not yield an analog formula for the $\gamma$-polynomial of $h^*_{P_G}(t) - h^*_{P_{G\setminus ij}}(t)$. During our investigation we observed a second intriguing formula for this difference, which we offer as a conjecture. 
	
Let $\Gamma$ be a unimodular triangulation of the subcomplex of $P_{G\setminus ij}$ consisting of all facets visible by $e_{ij}$, and consider

\[
L:= \lbrace h_F \, | \, F \text{ facet of } \Gamma \rbrace=
\lbrace h_1> h_2> \cdots> h_k \rbrace
\]
\\
be the set of all possible lattice distances from $e_{ij}$ to a facet of $\Gamma$ in decreasing order.

For $s=1,\ldots,k$ we define 

\[
\Gamma_s := \langle \sigma \text{ facet of } \Gamma \, | \, h_{\sigma} \geq h_s \rangle,
\]
the subcomplex of $\Gamma$ generated by the facets with lattice distance at least $h_s$ from $e_{ij}$. Observe that by definition $\Gamma_{s}$ is a subcomplex of $\Gamma_{s+1}$, for every $s=1,\dots,k-1$.

The following conjecture predicts that we can write $h^*_{P_G}(t) - h^*_{P_{G \sm ij}}(t)$ as the sum of palindromic polynomials in terms of these subcomplexes.

\begin{conjecture}\label{cong: formula for difference}
	Let $G$ be a $2$-connected graph, let $ij$ be any edge of $G$. Then
	\begin{equation}\label{eq:conj_sum}
			h^*_{P_G}(t) - h^*_{P_{G \sm ij}}(t)  =
			2t \sum_{s=1}^{k}\Bigl(\Bigl( h_{\Gamma_s}(t) - h_{\Gamma_{s -1}}(t)\Bigr)  \sum_{r=0}^{h_s -1} t^r \Bigr),
	\end{equation}
	where we set $h_{\Gamma_0}(t):=0$.\\
	Moreover, for every $s=1,\dots,k$ the polynomial $\Bigl( h_{\Gamma_s}(t) - h_{\Gamma_{s -1}}(t)\Bigr)  \sum_{r=0}^{h_s -1} t^r$ is palindromic, with nonnegative coefficients and with the same center as the left hand side.
\end{conjecture}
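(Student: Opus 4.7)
The plan is to take \Cref{lem:diff_hstar} as the starting point and regroup both of its summands according to the lattice-distance stratification $L=\{h_1>\cdots>h_k\}$. Using the telescoping identity $h_{\Gamma^{ij}}(t)=\sum_{s=1}^{k}(h_{\Gamma_s}(t)-h_{\Gamma_{s-1}}(t))$, the term $2th_{\Gamma^{ij}}(t)$ from \Cref{lem:diff_hstar} accounts exactly for the $r=0$ contribution on the right-hand side of \eqref{eq:conj_sum}. The remaining task is therefore to prove the identity
\[
\sum_{\substack{F\in\Delta^{ij}\\ G(F)\text{ cycle},\,h_F\geq 3}} h_{\lk_{\Delta^{ij}}(F)}(t)\bigl(t^{b_F+2}+\cdots+t^{a_F-1}\bigr) = t\sum_{s=1}^{k}\bigl(h_{\Gamma_s}(t)-h_{\Gamma_{s-1}}(t)\bigr)\bigl(t+t^2+\cdots+t^{h_s-1}\bigr).
\]

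To attack this, I would partition the cycle faces $F$ of $\Delta^{ij}$ by the value of $h_F$, and try to match each layer $\{F:h_F=h_s\}$ with the $s$-th summand on the right. For each such $s$, the cycle faces $F$ with $h_F=h_s$ correspond to oriented subpaths from $i$ to $j$ of signed length $h_s$, and $h_{\lk_{\Delta^{ij}}(F)}(t)$ enumerates the ways of extending $F$ to a facet inside $\Gamma^{ij}$. My first concrete step would be to test this layerwise matching on small families (cycles, complete graphs, complete bipartite graphs) to guess the correct bijective/algebraic identity, and then attempt the general proof inductively by peeling off lattice-distance layers in decreasing order of $h_s$.

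The main obstacle is a length mismatch: the polynomial $t^{b_F+2}+\cdots+t^{a_F-1}$ has $h_F-2$ terms, while the factor $t+t^2+\cdots+t^{h_s-1}$ from the conjecture has $h_s-1$ terms. Hence no facet-by-facet matching can work, and the identity must arise from substantial cancellation and redistribution. I expect the ``extra'' terms to come precisely from non-facet cycle faces $F$ with $h_F=h_s$ that are contained in facets of strictly larger lattice distance: their link polynomials carry the information needed to produce the length discrepancy and to assemble the difference $h_{\Gamma_s}-h_{\Gamma_{s-1}}$.

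The second part of the conjecture (palindromicity with correct center and nonnegativity of each summand) splits naturally. Palindromicity of the left-hand side with center $(n-1)/2$ is immediate from reflexivity of $P_G$ and $P_{G\setminus ij}$. Since $2t\sum_{r=0}^{h_s-1}t^r$ is palindromic with center $(h_s+1)/2$, it suffices to show that $h_{\Gamma_s}(t)-h_{\Gamma_{s-1}}(t)$ is palindromic with center $(n-h_s-2)/2$ and has nonnegative coefficients. I would try to establish these properties via a shelling of $\Gamma^{ij}$ compatible with the stratification (attaching facets in order of decreasing $h_\sigma$): such a shelling would express $h_{\Gamma_s}-h_{\Gamma_{s-1}}$ as a sum of nonnegative restriction monomials, while the $e_{ij}\leftrightarrow e_{ji}$ symmetry of $P_{G\setminus ij}$, which exchanges the layers $h_\sigma=h_s$ and $h_\sigma=-h_s$, should induce a Dehn--Sommerville-type relation producing the required palindromicity. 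Constructing such a shelling and verifying the symmetry claim is, in my view, the main structural hurdle of the entire proposal.
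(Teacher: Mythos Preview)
The statement you are attempting to prove is presented in the paper as an \emph{open conjecture}, not as a theorem; the paper does not contain a proof of it. The authors explicitly write that they ``offer [it] as a conjecture'' and that they ``are not aware of a geometric reason why $h_{\Gamma_s}(t)-h_{\Gamma_{s-1}}(t)$ should be a palindromic polynomial.'' The only case the paper actually establishes is the trivial one $L=\{1\}$, where the Betke--McMullen sum in \Cref{lem:diff_hstar} collapses to $2th_{\Gamma^{ij}}(t)$ and \eqref{eq:conj_sum} reduces to the same expression; beyond this, the paper only verifies the formula in two worked examples (\Cref{ex:negative-gamma} and \Cref{ex : G on 8v}). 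So there is nothing to compare your argument against.

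That said, your proposal is a reasonable research outline rather than a proof. You correctly isolate the telescoping step that handles the $r=0$ contribution, and you correctly identify the genuine obstruction: the factor $t^{b_F+2}+\cdots+t^{a_F-1}$ has $h_F-2$ terms while $t+\cdots+t^{h_s-1}$ has $h_s-1$, so no facetwise bijection can work and some nontrivial redistribution among cycle faces at different lattice heights is required. This is precisely the point at which the paper stops; your suggestion that the discrepancy is absorbed by non-facet cycle faces sitting inside facets of larger height is plausible but unproven, and would need a precise identity relating $\sum_{h_F=h_s} h_{\lk_{\Delta^{ij}}(F)}(t)\,\ell^*_F(t)$ to $h_{\Gamma_s}-h_{\Gamma_{s-1}}$ that you have not supplied.

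Finally, your idea of attacking palindromicity and nonnegativity via a shelling of $\Gamma^{ij}$ compatible with the height stratification is exactly what the paper poses as an open \emph{question} immediately after the conjecture. The existence of such a shelling is not known, so invoking it does not close the gap; it merely restates the difficulty. Your symmetry heuristic via $e_{ij}\leftrightarrow e_{ji}$ is also not obviously applicable, since that involution swaps the visible complex from $e_{ij}$ with the one visible from $e_{ji}$, not layers within a single $\Gamma^{ij}$.
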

In particular, the $\gamma$-polynomial of $h^*_{P_G}(t) - h^*_{P_{G \sm ij}}(t)$ would be the sum of the $\gamma$-polynomials of the summands on the right hand side. We want to underline that the last sentence in \Cref{cong: formula for difference} predicts a rather unexpected behavior, as we are not aware of a geometric reason why $h_{\Gamma_s}(t) - h_{\Gamma_{s -1}}(t)$ should be a palindromic polynomial.\\
Observe that \Cref{eq:conj_sum} is true when $L= \lbrace 1 \rbrace$. In fact, in this case there is no facet of $\Gamma_1=\Gamma^{ij}$ at height higher than one from $e_{ij}$, and hence there is no face $F$ of $\Delta^{ij}$ with $a_F-b_F\geq 2$. It follows by \Cref{lem:diff_hstar} that 
\[
h^*_{P_G}(t) - h^*_{P_{G \sm ij}}(t)  =
2th_{\Gamma^{ij}}(t),
\]
which coincides with \Cref{cong: formula for difference}. We illustrate the conjecture in two examples. We will see in the second that there are cases in which a summand in the right hand side is not $\gamma$-nonnegative although the whole sum is. However we believe that the formula in \eqref{eq:conj_sum} might give a useful way to control how the $\gamma$-polynomial changes after removing an edge in $G$.  

\begin{example}\label{ex:negative-gamma}
	Let $G$ be the graph with vertex set $[5]$ and edges $E=\{12,23,34,45,15,13\}$. It has been observed in \cite{AJKKV23} that the polynomial $c^{13}(t)=h^*_{P_G}(t) - h^*_{P_{G \sm 13}}(t)$ is not $\gamma$-nonnegative. Indeed, the $3$-dimensional complex $\Gamma^{13}$ has 3 facets at lattice distance $1$ from $e_{13}$ and we have that
	\[
		c^{13}(t)=h^*_{P_G}(t) - h^*_{P_{G \sm 13}}(t) = 2t(1+t+t^2)=2t(1+t)^2-2t^2,
	\]
	whose associated $\gamma$-polynomial has a negative coefficients.
	If we consider the edge $12$ then the corresponding complex $\Gamma^{12}$ has $3$ facets all at lattice distance $1$ from $e_{12}$. We can verify that
	\[
	c^{12}(t) = 2t(1+4t+t^2)=2t(1+t)^2+4t^2.
	\]
	The same holds for the edge $23$. Finally, if we consider the edge $34$ we have that $\Gamma^{34}$ has $4$ facets, two at lattice distance $1$ from $e_{34}$ and two at lattice distance $2$ from $e_{34}$. Thus we have an example where the sum in \Cref{eq:conj_sum} has more than one term. As predicted by \Cref{cong: formula for difference} we have that
	\[
	c^{34}(t) = 2t((1+t)(1+t)+(2t))=2t(1+t)^2+4t^2,
	\]
	and note that not just the whole sum, but all the summands of \Cref{eq:conj_sum} are palindromic. The same holds for the edges $45$ and $15$.
\end{example}

We illustrate \Cref{cong: formula for difference} in a larger example.
\begin{example}\label{ex : G on 8v}
	Let $G=([8],E)$ with $E=\{12,16,17,23,34,38,45,56,67,78\}$, and consider the edge $12$. There are 42 facets of $P_{G\setminus 12}$ which are visible from $e_{12}$: 4 have lattice distance $3$, $10$ have lattice distance $2$ and 28 facets have lattice distance $1$ from $e_{12}$. We remark that not all of these facets are unimodular simplices. In \Cref{tab: table} we include the polynomials appearing on the right hand side of \eqref{eq:conj_sum}.\\
	 
	\begin{table}[h]
		\centering
		\begin{tabular}{l|l|c|l|l}
			$s$ & $h_s$ & $\#$ facets of $P_{G\setminus 12}$  & $h_{\Gamma_s}(t)$ & $h_{\Gamma_s}(t)-h_{\Gamma_{s-1}}(t)$\\
			& & at distance $h_s$ from $e_{ij}$ & &\\ \hline
			$1$ & $3$ & $4$ ($1$ is not a simplex) & $1 + 2t + 2t^2 + t^3$ &  $1 + 2t + 2t^2 + t^3$\\
			$2$ & $2$ & $10$ ($4$ are not a simplex) & $1 + 7t + 10t^2 + 6t^3$ &  $5t + 8t^2 + 5t^3$\\
			$3$ & $1$ & $28$ ($10$ are not a simplex) & $1 + 11t + 30t^2+26t^3+4t^4$ &  $4t + 20t^2 + 20t^3+4t^4$
		\end{tabular}
		\vspace{10pt}
		\caption{The polynomials in \Cref{cong: formula for difference} for \Cref{ex : G on 8v}.}
		\label{tab: table}
	\end{table}
	\Cref{cong: formula for difference} predicts correctly that
	\begin{align*}
		\frac{1}{2t}(h^*_{P_G}(t) - h^*_{P_{G \sm 12}}(t)) &= (1 + 2t + 2t^2 + t^3)(1+t+t^2)\\
		&+(5t + 8t^2 + 5t^3)(1+t)\\
		&+(4t + 20t^2 + 20t^3+4t^4)\\
		&=1+12t+38t^2+38t^3+12t^4+t^5.
	\end{align*}
	Observe that while $h^*_{P_G}(t) - h^*_{P_{G \sm 12}}(t)$ is $\gamma$-nonnegative, the first summand is not $\gamma$-nonnegative. 
\end{example}
	The following question is also motivated by computations for graphs with few vertices, in which case the answer is positive. Recall that a pure simplicial complex is \emph{shellable} if there is a total ordering $F_1<\dots <F_m$ of its facets such that the intersection of each $F_i$ with the complex generated by the facets $F_1,\dots,F_{i-1}$ is pure of codimension $1$. Remarkably, the coefficients of the $h$-vector of a shellable simplicial complex have a combinatorial interpretation (see for instance \cite[III.2]{Stanley-greenBook}). This fact could be used to give a combinatorial interpretation for the polynomials $h_{\Gamma_s}(t) - h_{\Gamma_{s -1}}(t)$.
\begin{question}
	Are the simplicial complexes $\Gamma_s$ shellable for any $s= 1, \ldots, k$? Is there a shelling order of $\Gamma$ for which every facet of $\Gamma_s$ is smaller than any facet of $\Gamma_{s+1}$, for every $s=1,\dots,k-1$?
\end{question}

\section*{Acknowledgments}
We thank Michele D'Adderio and Vassilis Dionyssis Moustakas for fruitful discussions. LV is partially supported by the PRIN
project “Algebraic and geometric aspects of Lie theory” (2022S8SSW2) and by INdAM – GNSAGA. GC is partially supported by a DFG grant "Extremal convex bodies with respect to lattice functionals" within the SPP "Combinatorial Synergies".

\bibliographystyle{alpha}
\bibliography{bibliography}

\end{document}